\newtheorem{Theorem}{Theorem}[section]
\newtheorem{Lemma}[Theorem]{Lemma}
\newtheorem{Corollary}[Theorem]{Corollary}
\newtheorem{Proposition}[Theorem]{Proposition}
\newtheorem{Example}[Theorem]{Example}
\newtheorem{Definition}[Theorem]{Definition}
\newtheorem{Conjecture}[Theorem]{Conjecture}
\def\qed{\ifhmode\textqed\fi
	\ifmmode\ifinner\hfill\quad\qedsymbol\else\dispqed\fi\fi}
\def\textqed{\unskip\nobreak\penalty50
	\hskip2em\hbox{}\nobreak\hfill\qedsymbol
	\parfillskip=0pt \finalhyphendemerits=0}
\def\dispqed{\rlap{\qquad\qedsymbol}}
\def\p{\mathfrak{p}}
\def\m{\mathfrak{m}}
\def\v{\textup{v}}
\def\ZZ{\mathbb{Z}}
\def\soc{\textup{soc}}
\def\HS{\textup{HS}}
\def\set{\textup{set}}
\def\depth{\textup{depth}}
\def\reg{\textup{reg}}
\def\lcm{\textup{lcm}}
\def\supp{\textup{supp}}
\def\Tor{\textup{Tor}}
\def\Ass{\textup{Ass}}
\begin{document}
	
	\title{Polymatroidal ideals\\ and their asymptotic syzygies}	
	\author{Antonino Ficarra, Dancheng Lu}
	
	\address{Antonino Ficarra, BCAM -- Basque Center for Applied Mathematics, Mazarredo 14, 48009 Bilbao, Basque Country -- Spain, Ikerbasque, Basque Foundation for Science, Plaza Euskadi 5, 48009 Bilbao, Basque Country -- Spain}
	\email{aficarra@bcamath.org,\,\,\,\,\,\,\,\,\,\,\,\,\,antficarra@unime.it}
	
	\address{Dancheng Lu, School  of Mathematical Sciences, Soochow University, 215006 Suzhou, P.R.China}
	\email{ludancheng@suda.edu.cn}
	
	\thanks{
	}
	
	\subjclass[2020]{Primary 13F20; Secondary 13F55, 05C70, 05E40.}
	
	\keywords{Monomial ideals, Homological shift ideals, Syzygies}
	
	\begin{abstract}
		Let $I$ be a polymatroidal ideal. In this paper, we study the asymptotic behavior of the homological shift ideals of powers of polymatroidal ideals. We prove that the first homological shift algebra $\text{HS}_1(\mathcal{R}(I))$ of $I$ is generated in degree one as a module over the Rees algebra $\mathcal{R}(I)$ of $I$. We conjecture that the $i$th homological shift algebra $\text{HS}_i(\mathcal{R}(I))$ of $I$ is generated in degrees $\le i$, and we confirm it in many significant cases. We show that $I$ has the $1$st homological strong persistence property, and we conjecture that the sequence $\{\text{Ass}\,\text{HS}_i(I^k)\}_{k>0}$ of associated primes of $\text{HS}_i(I^k)$ becomes an increasing chain for $k\ge i$. This conjecture is established when $i=1$ and for many families of polymatroidal ideals. Finally, we explore componentwise polymatroidal ideals, and we prove that $\text{HS}_1(I)$ is again componentwise polymatroidal, if $I$ is componentwise polymatroidal.
	\end{abstract}
	
	\maketitle
	\section{Introduction}\label{sec1}
	
	Let $S=K[x_1,\dots,x_n]$ be the standard graded polynomial ring over a field $K$, and let $I\subset S$ be a monomial ideal. In \cite{FQ,FQ1}, the first author and Qureshi introduced a new aspect to the theory of syzygies of powers of $I$. Recall that the $i$th \textit{homological shift ideals} \cite{HMRZa} of $I$ is defined as
	$$
	\HS_i(I)\ =\ ({\bf x^a}:\ \Tor_i^S(K,I)_{\bf a}\ne0),
	$$
	where ${\bf x^a}=\prod_{i=1}^{n}x_i^{a_i}$ if ${\bf a}=(a_1,\dots,a_n)\in\ZZ_{\ge0}^n$. Note that $\HS_0(I)=I$. Homological shift ideals encode critical multigraded data arising from the syzygies of $I$, see \cite{Bay019, Bay2023, BJT019, CF1, CF2, F2, FPack1, F-SCDP, FH2023, FQ, FQ1, HMRZa, HMRZb, LW, RS, TBR24}. A major motivation to study homological shift ideals comes from the Bandari-Bayati-Herzog conjecture \cite{Bay019, HMRZa}.
	
	It was conjectured in 2012 by Bandari, Bayati and Herzog that the homological shift ideals preserve the polymatroidal property. Polymatroidal ideals, which are the algebraic part of discrete polymatroids \cite[Chapter 12]{HHBook}, constitute one of the most distinguished classes of monomial ideals. Recently, Cid-Ruiz, Matherne and Shapiro settled the Bandari-Bayati-Herzog conjecture \cite[Theorem A(iii)]{CMS}.
	
	\begin{Theorem}\label{Thm:CMS}
		Let $I\subset S$ be a polymatroidal ideal. Then $\HS_i(I)$ is polymatroidal.
	\end{Theorem}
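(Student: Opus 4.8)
The plan is to read $\HS_i(I)$ off a minimal multigraded free resolution and then verify the exchange property directly. Recall that a polymatroidal ideal has linear quotients: ordering its minimal generators $G(I)=\{u_1,\dots,u_m\}$ by decreasing lexicographic order, each colon ideal $(u_1,\dots,u_{j-1}):u_j$ is generated by the variables in
\[
\set(u_j)\ =\ \bigl\{\,x_k\ :\ \exists\,\ell>k\ \text{with}\ \deg_\ell(u_j)>0\ \text{and}\ x_ku_j/x_\ell\in G(I)\,\bigr\}
\]
(see \cite{HHBook}). Since $I$ is generated in a single degree $d$ and these colons are generated by linear forms, the iterated mapping cone over this order is already a minimal resolution, and one obtains
\[
\HS_i(I)\ =\ \bigl(\,\mathbf{x}_F\,u_j\ :\ j\in[m],\ F\subseteq\set(u_j),\ |F|=i\,\bigr),\qquad \mathbf{x}_F:=\prod_{x_k\in F}x_k .
\]
Every generator on the right has degree $d+i$, so $\HS_i(I)$ is generated in one degree --- the first half of being polymatroidal --- and the task is reduced to a combinatorial statement about the discrete polymatroid $\mathcal{B}$ underlying $I$: the exponent vectors of the monomials $\mathbf{x}_Fu_j$ must again form the bases of a discrete polymatroid.

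So one takes two generators $w=\mathbf{x}_Fu$ and $w'=\mathbf{x}_{F'}u'$ of $\HS_i(I)$ and an index $k$ with $\deg_k(w)>\deg_k(w')$, and seeks $\ell$ with $\deg_\ell(w)<\deg_\ell(w')$ and $x_\ell w/x_k\in\HS_i(I)$; concretely, $x_\ell w/x_k$ must be rewritable as $\mathbf{x}_{F''}u''$ with $u''\in G(I)$, $F''\subseteq\set(u'')$ and $|F''|=i$. A short bookkeeping step shows that in every case $\deg_k(u)\ge\deg_k(u')$, with equality possible only in the borderline situation $k\in F\setminus F'$ and $\deg_k(u)=\deg_k(u')$. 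When the inequality is strict, the exchange property of $\mathcal{B}$ supplies an index $\ell$ with $\deg_\ell(u)<\deg_\ell(u')$ and $u'':=x_\ell u/x_k\in G(I)$, after which one must transport $F$ to a valid $F''\subseteq\set(u'')$; in the borderline case, and whenever $\ell$ fails to lie in $\set(u'')$, one is forced to move an element of $F$ as well.

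The heart of the matter --- and what I expect to be the main obstacle --- is exactly this interaction between exchanging in the ``base'' $u$ and adjusting the ``homological'' index set $F$. I would attack it with two tools: the \emph{symmetric} exchange theorem for discrete polymatroids (Herzog--Hibi, \cite{HHBook}), which allows compatible exchanges in $u$ and $u'$ simultaneously; and a structural lemma describing how $\set(\cdot)$ changes under a single exchange $u\mapsto x_\ell u/x_k$ --- roughly, that an element of $\set(u)$ either survives in $\set(x_\ell u/x_k)$ or can be replaced by a nearby index that does. Built around this lemma, the case analysis (organized by the positions of $k$ and $\ell$ relative to $F,F',\set(u),\set(u')$) becomes a finite, if delicate, verification. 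As a warm-up and a guide to the right bookkeeping I would first settle $i=1$, where $\HS_1(I)=(x_ku:u\in G(I),\ k\in\set(u))=(\lcm(u,v):u,v\in G(I),\ \deg\lcm(u,v)=d+1)$ and the submodular rank function of $I$ can be used directly. A fully geometric alternative worth keeping in reserve is to show that $\mathrm{Newt}(\HS_i(I))$ is a generalized permutohedron by comparing it with the Minkowski sum of $\mathrm{Newt}(I)$ with the $i$-th hypersimplex and then cutting out the correct sub-polymatroid; but the cutting is again governed by the same $\set(\cdot)$ combinatorics, so it does not circumvent the core difficulty.
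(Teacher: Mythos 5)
This statement is not proved in the paper at all: it is the Bandari--Bayati--Herzog conjecture, which the authors import as \cite[Theorem A(iii)]{CMS} (Cid-Ruiz, Matherne and Shapiro) and then use as a black box. So the relevant comparison is between your plan and the actual proof in \cite{CMS}, and there your proposal has a genuine gap rather than a complete alternative argument. Your reduction is fine as far as it goes: polymatroidal ideals have linear quotients, the Herzog--Takayama formula $\HS_i(I)=(\mathbf{x}_Fu:\ u\in\mathcal{G}(I),\ F\subseteq\set(u),\ |F|=i)$ is exactly equation (1) of the paper, and your bookkeeping that $\deg_{x_k}(u)\ge\deg_{x_k}(u')$ whenever $\deg_{x_k}(w)>\deg_{x_k}(w')$ is correct. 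But everything then hinges on the ``structural lemma describing how $\set(\cdot)$ changes under a single exchange $u\mapsto x_\ell u/x_k$,'' which you state as a hope rather than prove. This is not a routine verification you can wave at: $\set(u)$ depends on the chosen linear-quotients order, an exchange $u\mapsto x_\ell u/x_k$ can move the new generator far away in that order and destroy or create many elements of $\set$, and it is precisely this interaction that blocked the direct attack for over a decade (it succeeds only in special cases such as $i=1$, matroidal ideals, and strong-exchange/Borel-type ideals, which is why those are the cases handled separately in the literature and in Section 5 of this paper).

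The proof that actually works is structurally different from what you propose. Cid-Ruiz, Matherne and Shapiro do not chase $\set(\cdot)$ through exchanges; they construct the minimal multigraded free resolution of a polymatroidal ideal in closed form and characterize membership of a monomial $x_{j_1}\cdots x_{j_i}\mathbf{x}^{\mathbf{u}}$ in $\HS_i(I)$ by the nonvanishing of the M\"obius function of the \emph{dual} polymatroid evaluated at $\mathbf{c}-\mathbf{u}-\mathbf{e}_{j_1}-\cdots-\mathbf{e}_{j_i}$ (this is the criterion quoted and used in Section 5.2 of the present paper). Polymatroidality of $\HS_i(I)$ is then deduced from properties of this M\"obius function, not from transporting $F$ alongside a base exchange. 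Your ``geometric alternative'' via Newton polytopes runs into the same wall, as you yourself note. So the proposal correctly identifies where the difficulty lies but does not overcome it; as written it is a research plan for a statement that required a new idea, not a proof.
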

	
	Inspired by the Bandari-Bayati-Herzog conjecture, in \cite{FQ} the first author and Qureshi studied the problem of determining when the following $K$-algebra, which is called the $i$th \textit{homological shift algebra} of $I$,
	$$
	\HS_i(\mathcal{R}(I))\ =\ \bigoplus_{k\ge1}\HS_i(I^k),
	$$
	is a finitely generated module over the Rees algebra $\mathcal{R}(I)=\bigoplus_{k\ge0}I^k$ of $I$. While this is not the case in general \cite[Example 1.1]{FQ}, it holds true when $I$ has linear powers \cite[Theorem 1.4]{FQ} and it holds for the first homological shift algebra of any edge ideal \cite[Corollary 2.4]{FQ1}. When $\HS_i(\mathcal{R}(I))$ is a $\mathcal{R}(I)$-module, many invariants behave asymptotically well \cite[Theorem 2.1]{FQ}. For instance, $\Ass\,\HS_i(I^k)$ and $\depth\,S/\HS_i(I^k)$ became constant for large $k$, and $k\mapsto\reg\,\HS_i(I^k)$, $k\mapsto\v(\HS_i(I^k))$ became linear functions for large $k$. See Section \ref{sec2} for more details.
	
	Polymatroidal ideals have linear powers, and so their homological shift algebras are finitely generated modules over the Rees algebra. In this paper, employing the fundamental Theorem \ref{Thm:CMS}, we deeply study these algebras and the asymptotic behavior of the invariants of $\HS_i(I^k)$ with $I\subset S$ a polymatroidal ideal.\vspace*{0.1cm}
	
	The paper is organized as follows. Sections \ref{sec2} and \ref{sec3} summarize basic material on homological shift algebras and polymatroidal ideals. Let $I\subset S$ be a polymatroidal ideal. In Section \ref{sec4} we prove that $\HS_1(\mathcal{R}(I))$ is generated in degree one as a $\mathcal{R}(I)$-module (Theorem \ref{Thm:HS1-fg}). Based on this result, and several experiments, we expect that $\HS_i(\mathcal{R}(I))$ is generated in degrees $\le i$ (Conjecture \ref{Conj:GenDeg}). This conjecture holds for $i=1$. At the moment we do not have a general strategy to tackle this conjecture. In Section \ref{sec5}, we establish it for: (a) principal Borel ideals, (b) ideals satisfying the strong exchange property, and (c) matroidal edge ideals. To prove the case (b), we use the concept of M\"obius function of a polymatroid, which was introduced in \cite{CMS}.
	
	In Section \ref{sec6}, we prove that the Betti numbers satisfy $\beta_j(\HS_i(I^k))\le\beta_j(\HS_i(I^{k+1}))$ for all integers $i,j,k$ (Theorem \ref{Thm:bettiPol}), leading to the fact that the depth function $k\mapsto\depth\,S/\HS_i(I^k)$ is non-increasing. As an immediate consequence, if $\m=(x_1,\dots,x_n)\in\Ass\,\HS_i(I^k)$, then $\m\in\Ass\,\HS_i(I^{k+1})$ (Corollary \ref{Cor:Ass-m}). Based on this result, and experimental evidence, we expect that the sequence $\{\Ass\,\HS_i(I^k)\}_{k>0}$ becomes an increasing chain for $k\ge i$ (Conjecture \ref{Conj:AssPolym}). It turns out that Conjecture \ref{Conj:GenDeg} implies Conjecture \ref{Conj:AssPolym} (Theorem \ref{Thm:Implication}). Interestingly enough, to prove this result we employ the concept of $i$th \textit{homological strong persistence property} introduced in \cite{FQ1} and the so-called determinantal trick \cite[Corollary 1.1.8]{HS2006}. Hence, we establish Conjecture \ref{Conj:AssPolym} for $i=1$ and for the families (a), (b), (c) mentioned before.
	
	The product of polymatroidal ideals is polymatroidal. In Section \ref{sec8}, we prove that if $\m I$ is polymatroidal and $I$ has linear resolution, then $I$ is polymatroidal (Proposition \ref{Prop:mI=>IPolym}). This result was conjectured by Bandari and Herzog \cite{BH2013}.
	
	In the last two sections, we consider \textit{componentwise polymatroidal ideals}. These ideals, introduced in \cite{BH2013}, are monomial ideals whose all graded components $I_{\langle j\rangle}$ are polymatroidal ideals. It is proved in \cite{F-SCDP} that these ideals have linear quotients. In Section \ref{sec9}, we prove that the saturation of a (componentwise) polymatroidal ideal is componentwise polymatroidal (Corollary \ref{Cor:Isat}). We expect that $\HS_i(I)$ is componentwise polymatroidal for all $i$, if $I$ is such. We establish this expectation for $i=1$ (Theorem \ref{Thm:HS-cp}).
	
	In contrast to polymatroidal ideals, powers of componentwise polymatroidal ideals are no longer componentwise polymatroidal. However, it is expected that they are componentwise linear. Whether $\HS_i(\mathcal{R}(I))$ is a finitely generated $\mathcal{R}(I)$-module for a componentwise polymatroidal ideal $I\subset S$ is an open question at the moment.
	
	\section{Homological shift algebras}\label{sec2}
	
	Let $I\subset S$ be a graded ideal and let $\p\in\Ass\,I$ be an associated prime. Recall that the \textit{regularity} of $I$ is defined as $\reg\,I=\max\{j:\ \Tor^S_i(K,I)_{i+j}\ne0\}$, the \textit{$\v_\p$-number} of $I$ is defined as $\v_\p(I)=\min\{\deg(f):\ f\in S_d,\ (I:f)=\p\}$, and the \textit{$\v$-number} of $I$ is defined as $\v(I)=\min\limits_{\p\in\Ass\,I}\v_\p(I)$. See, also, \cite{Conca23,CSTVV,F2023,FS2,FSPack,FSPackA}.\smallskip
	
	Let $I\subset S$ be a monomial ideal and let $\mathcal{G}(I)$ be its minimal monomial generating set. The $i$th \textit{homological shift algebra} of $I$ is introduced in \cite{FQ} as the $K$-algebra,
	$$
	\HS_i(\mathcal{R}(I))\ =\ \bigoplus_{k\ge1}\HS_i(I^k).
	$$
	
	We say that $I\subset S$ has \textit{linear powers} if $I^k$ has linear resolution for all $k\ge1$. The next results were proved in \cite[Theorem 1.4]{FQ} and \cite[Theorem 2.1]{FQ}.
	\begin{Theorem}\label{Thm:resume-HS(R(I))}
		Let $I\subset S$ be a monomial ideal with linear powers. Then $\HS_i(\mathcal{R}(I))$ is a finitely generated graded $\mathcal{R}(I)$-module. Hence, $I\cdot\HS_i(I^k)\subset\HS_i(I^{k+1})$ for all $k\ge1$ and equality holds for all $k\gg0$. Furthermore, the following statements hold.
		\begin{enumerate}
			\item[\textup{(a)}] The set $\Ass\,\HS_i(I^k)$ stabilizes: $\Ass\,\HS_i(I^{k+1})=\Ass\,\HS_i(I^{k})$ for all $k\gg0$.
			\item[\textup{(b)}] For all $k\gg0$, we have $\depth\, S/\HS_i(I^{k+1})=\depth\,S/\HS_i(I^k)$.
			\item[\textup{(c)}] For all $k\gg0$, $\reg\,\HS_i(I^k)$ is a linear function in $k$.
			\item[\textup{(d)}] For all $k\gg0$, $\v(\HS_i(I^k))$ is a linear function in $k$.
			\item[\textup{(e)}] For all $k\gg0$ and all $\p\in\Ass\,\HS_i(I^k)$, $\v_\p(\HS_i(I^k))$ is a linear function in $k$.
		\end{enumerate}
	\end{Theorem}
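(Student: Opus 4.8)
The plan is to realize $\HS_i(\mathcal{R}(I))=\bigoplus_{k\ge 1}\HS_i(I^k)$ as a graded $\mathcal{R}(I)$-submodule of $\mathcal{R}(I)$, and to read off a finite generating set from the ordinary syzygy module of $\mathcal{R}(I)$ itself. First I would introduce $N_i:=\Tor_i^S(K,\mathcal{R}(I))$. Since $\mathcal{R}(I)$ is a finitely generated $K$-algebra it is Noetherian, and computing $N_i$ by base-changing the Koszul complex of $x_1,\dots,x_n$ over $S$ (a minimal free resolution of $K$) along $S\hookrightarrow\mathcal{R}(I)$ presents $N_i$ as the homology of a bounded complex of finitely generated free $\mathcal{R}(I)$-modules with $\mathcal{R}(I)$-linear differentials; hence $N_i$ is a finitely generated $\ZZ^n\times\ZZ$-graded $\mathcal{R}(I)$-module. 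Because $\Tor$ commutes with direct sums, $(N_i)_{({\bf a},k)}=\Tor_i^S(K,I^k)_{\bf a}$, so the component of $t$-degree $k$ records exactly the multigraded Betti numbers of $I^k$, and $\HS_i(I^k)=({\bf x^a}:\,(N_i)_{({\bf a},k)}\ne 0)$.

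The crux --- and, I expect, essentially the whole difficulty --- is to show that $\bigoplus_{k\ge 1}\HS_i(I^k)t^k$ is an $\mathcal{R}(I)$-submodule of $\mathcal{R}(I)$, equivalently that $I\cdot\HS_i(I^k)\subseteq\HS_i(I^{k+1})$ for all $k$; this is the point at which linear powers are used. (That the sum lies inside $\mathcal{R}(I)$ is the standard observation that $\beta_{i,{\bf a}}(I^k)\ne 0$ forces ${\bf x^a}$ to be a least common multiple of minimal generators of $I^k$, hence ${\bf x^a}\in I^k$.) Write $d$ for the generating degree of $I$, fix $u\in\mathcal{G}(I)$ and a minimal generator ${\bf x^a}$ of $\HS_i(I^k)$, so that $\beta_{i,{\bf a}}(I^k)\ne 0$ and, by linearity of the resolution of $I^k$, $|{\bf a}|=kd+i$. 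I would then use the short exact sequence $0\to uI^k\to I^{k+1}\to Q\to 0$ with $Q:=I^{k+1}/uI^k$. Multiplication by $u$ is a multigraded isomorphism $uI^k\cong I^k(-\deg u)$, so $\Tor_i^S(K,uI^k)_{{\bf a}+\deg u}=\Tor_i^S(K,I^k)_{\bf a}\ne 0$; and $Q$, being a quotient of $I^{k+1}$ and hence generated in degrees $\ge (k+1)d$, satisfies $\Tor_{i+1}^S(K,Q)_j=0$ for all $j<(k+1)d+i+1$, in particular in total degree $|{\bf a}+\deg u|=(k+1)d+i$. Plugging these two facts into the long exact $\Tor$-sequence in multidegree ${\bf a}+\deg u$ shows that the map $\Tor_i^S(K,uI^k)_{{\bf a}+\deg u}\to\Tor_i^S(K,I^{k+1})_{{\bf a}+\deg u}$ is injective, whence $\beta_{i,{\bf a}+\deg u}(I^{k+1})\ne 0$, that is $u{\bf x^a}\in\HS_i(I^{k+1})$. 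A short bookkeeping step (reducing an arbitrary monomial product $f\cdot h$ with $f\in I$, $h\in\HS_i(I^k)$ to a monomial multiple of some $u'{\bf x^{a'}}$ with $u'\in\mathcal{G}(I)$ and $\beta_{i,{\bf a'}}(I^k)\ne 0$) then upgrades this to $I\cdot\HS_i(I^k)\subseteq\HS_i(I^{k+1})$.

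Granting the module property, finite generation follows by comparison with $N_i$. If $g_1,\dots,g_r$ generate $N_i$ over $\mathcal{R}(I)$ in $t$-degrees $k_1,\dots,k_r$ and $x$-degrees ${\bf a}_1,\dots,{\bf a}_r$, then for each $k$ and each minimal generator ${\bf x^b}$ of $\HS_i(I^k)$ we have $(N_i)_{({\bf b},k)}\ne 0$, hence $\mathcal{R}(I)_{({\bf b}-{\bf a}_s,\,k-k_s)}\ne 0$ for some $s$; thus ${\bf x^{b-a_s}}\in I^{k-k_s}$, and since ${\bf x^{a_s}}\in\HS_i(I^{k_s})$ this yields $\HS_i(I^k)\subseteq\sum_{k_s\le k}\HS_i(I^{k_s})I^{k-k_s}$. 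The reverse inclusion is the module property, so the two sides agree and $\HS_i(\mathcal{R}(I))$ is generated over $\mathcal{R}(I)$ by the finite set $\bigcup_s\mathcal{G}(\HS_i(I^{k_s}))$. Since $\mathcal{R}(I)$ is standard graded over $S$, a finitely generated graded module $M=\bigoplus_kM_k$ over it satisfies $M_{k+1}=\mathcal{R}(I)_1M_k$ for $k\gg 0$; for $M=\HS_i(\mathcal{R}(I))$ this is precisely $\HS_i(I^{k+1})=I\cdot\HS_i(I^k)$ for $k\gg 0$. Finally, statements (a)--(e) are immediate applications, to the finitely generated graded $\mathcal{R}(I)$-module $\HS_i(\mathcal{R}(I))$ whose $k$-th component is $\HS_i(I^k)$, of the standard asymptotic theory for graded components of finitely generated modules over a standard graded $K$-algebra: eventual constancy of $\Ass\,\HS_i(I^k)$ and of $\depth\,S/\HS_i(I^k)$, and eventual linearity in $k$ of $\reg\,\HS_i(I^k)$, of $\v(\HS_i(I^k))$, and of $\v_\p(\HS_i(I^k))$. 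All the real content sits in the module-property step of the second paragraph; the remainder is bookkeeping together with these citations.
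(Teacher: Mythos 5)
Your argument is correct, but note that the paper itself gives no proof of this statement: it is quoted verbatim from \cite[Theorems 1.4 and 2.1]{FQ}, so the comparison is really with that source, and your key step is essentially the argument used there. The heart of the matter is exactly where you place it: the inclusion $I\cdot\HS_i(I^k)\subset\HS_i(I^{k+1})$, which you obtain from the short exact sequence $0\to uI^k\to I^{k+1}\to Q\to 0$, the multigraded isomorphism $uI^k\cong I^k(-\deg u)$, and the vanishing $\Tor_{i+1}^S(K,Q)_{(k+1)d+i}=0$ forced by $Q$ being generated in degrees $\ge (k+1)d$; the linear powers hypothesis enters precisely to pin the generators of $\HS_i(I^k)$ in total degree $kd+i$, and your degree bookkeeping is right. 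Two minor remarks. First, once $\bigoplus_k\HS_i(I^k)t^k$ is known to be an $\mathcal{R}(I)$-submodule of the Noetherian ring $\mathcal{R}(I)$ (using that $\beta_{i,\mathbf a}(I^k)\ne0$ forces ${\bf x^a}\in I^k$), finite generation is immediate; the detour through $N_i=\Tor_i^S(K,\mathcal{R}(I))$ is not needed for that, though it does produce an explicit generating set. Second, items (a)--(e) are not uniform consequences of one ``standard'' fact: (a) and (b) are Brodmann-type stability for graded components of a finitely generated module over a standard graded algebra, (c) is the Trung--Wang asymptotic linearity of regularity, and (d)--(e) rest on the asymptotic $\v$-number results of \cite{FS2,FSPackA}; your deferral to these citations matches what \cite[Theorem 2.1]{FQ} itself does, so this is acceptable at the level of rigor of the paper.
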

	
	\section{Polymatroidal ideals}\label{sec3}
	
	A monomial ideal $I\subset S$ is called \textit{polymatroidal} if the exponent vectors of the minimal monomial generators of $I$ form the set of bases of a discrete polymatroid. A squarefree polymatroidal ideal is called \textit{matroidal}.
	
	For a monomial $u=x_1^{a_1}\cdots x_n^{a_n}\in S$, we define the \textit{$x_i$-degree} of $u$ as the integer $$\deg_{x_i}(u)=\max\{j:\ x_i^j\ \textup{divides}\ u\}=a_i.$$
	
	A monomial ideal $I\subset S$ is polymatroidal, if and only if, it is generated in one degree and the \textit{exchange property} holds: For all $u,v\in\mathcal{G}(I)$ with $\deg_{x_i}(u)>\deg_{x_i}(v)$ for some $i$, there exists $j$ with $\deg_{x_j}(u)<\deg_{x_j}(v)$ such that $x_j(u/x_i)\in\mathcal{G}(I)$.\smallskip
	
	By \cite[Theorem 12.4.1]{HHBook}, polymatroidal ideals also satisfy the so-called \textit{symmetric exchange property}: For all $u,v\in\mathcal{G}(I)$ and all $i$ such that $\deg_{x_i}(u)<\deg_{x_i}(v)$ there exists $j$ with $\deg_{x_j}(u)>\deg_{x_j}(v)$ such that $x_i(u/x_j)$, $x_j(v/x_i)\in\mathcal{G}(I)$.\smallskip
	
	Polymatroidal ideals have linear quotients with respect to the lexicographic order of $\mathcal{G}(I)$, induced by any ordering of the variables of $S$. Recall that $I$ has {\em linear quotients} if the set $\mathcal{G}(I)$ can be ordered as $u_1,\ldots,u_m$ such that for each $i = 2,\ldots,m$, the ideal $(u_1,\ldots,u_{i-1}):(u_i)$ is generated by variables. By \cite[Lemma 2.1]{JZ}, we can always assume that $\deg(u_1)\le\dots\le\deg(u_m)$. We put $\set(u_1)=\emptyset$ and
	$$
	\set(u_i)\ =\ \{\ell\ :\ x_\ell\in(u_1,\dots,u_{i-1}):(u_i)\},\quad\textup{for}\ i=2,\dots,m.
	$$
	
	By \cite[Lemma 1.5]{ET}, if $I\subset S$ has linear quotients, we have
	\begin{equation}\label{eq:HerTak}
		\HS_i(I)\ =\ ({\bf x}_Fu:\ u\in\mathcal{G}(I),\ F\subset\set(u),\ |F|=i),
	\end{equation}
	where ${\bf x}_F=\prod_{j\in F}x_j$ if $F\ne\emptyset$, and ${\bf x}_\emptyset=1$ otherwise.\smallskip
	
	Products of polymatroidal ideals are polymatroidal \cite[Theorem 12.6.3]{HHBook}. Therefore, a polymatroidal ideal $I\subset S$ has linear powers, and so by Theorem \ref{Thm:resume-HS(R(I))} the homological shift algebras $\HS_i(\mathcal{R}(I))$ are finitely generated $\mathcal{R}(I)$-modules.
	
	\section{The first homological shift algebra of a polymatroidal ideal}\label{sec4}
	
	In this section, our main goal is to describe the first homological shift algebra of a polymatroidal ideal.
	\begin{Theorem}\label{Thm:HS1-fg}
		Let $I\subset S$ be a polymatroidal ideal. Then, $$\HS_1(I^{k+1})=I\cdot\HS_1(I^k),$$ for all $k\ge1$. In particular, $\HS_1(\mathcal{R}(I))$ is generated in degree one as a $\mathcal{R}(I)$-module.
	\end{Theorem}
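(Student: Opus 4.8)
The plan is to combine the Herzog--Takayama formula \eqref{eq:HerTak} for first homological shift ideals with the base--exchange combinatorics of discrete polymatroids. Since a polymatroidal ideal has linear powers, Theorem \ref{Thm:resume-HS(R(I))} already gives $I\cdot\HS_1(I^k)\subseteq\HS_1(I^{k+1})$; iterating this along $\HS_1(I)\cdot I=I\cdot\HS_1(I)\subseteq\HS_1(I^2)$, $\HS_1(I^2)\cdot I\subseteq\HS_1(I^3)$, $\dots$, one also gets $\HS_1(I)\cdot I^k\subseteq I\cdot\HS_1(I^k)\subseteq\HS_1(I^{k+1})$ for every $k\ge1$. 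So it is enough to prove the reverse inclusion $\HS_1(I^{k+1})\subseteq\HS_1(I)\cdot I^k$. As a preliminary I would record the shape of the linear--quotients data of a polymatroidal ideal $L$: ordering $\mathcal{G}(L)$ in decreasing lexicographic order (so that $L$ has linear quotients), one checks that $\ell\in\set_L(w)$ if and only if there is a variable $x_p$ with $p>\ell$, $x_p\mid w$ and $x_\ell(w/x_p)\in\mathcal{G}(L)$; indeed the monomial $x_\ell(w/x_p)$ precedes $w$ precisely when $p>\ell$, and then $x_\ell\in\big(x_\ell(w/x_p)\big):(w)$. In particular, by \eqref{eq:HerTak}, each generator $x_\ell u$ of $\HS_1(I^{k+1})$ comes equipped with a variable $x_t$ such that $t>\ell$, $x_t\mid u$ and $x_\ell(u/x_t)\in\mathcal{G}(I^{k+1})$.

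The heart of the proof is the following exchange lemma for powers: \emph{if $u\in\mathcal{G}(I^{k+1})$ and $x_\ell(u/x_t)\in\mathcal{G}(I^{k+1})$, then $u$ has a factorization $u=v_0v_1\cdots v_k$ with all $v_s\in\mathcal{G}(I)$ such that $x_t\mid v_0$ and $x_\ell(v_0/x_t)\in\mathcal{G}(I)$.} Granting this, apply it to the generator $x_\ell u$ of $\HS_1(I^{k+1})$ found above and fix such a factorization. Since $t>\ell$, the criterion of the previous paragraph (now for $L=I$) gives $\ell\in\set_I(v_0)$, hence $x_\ell v_0\in\HS_1(I)$ by \eqref{eq:HerTak}; and since $v_1\cdots v_k\in I^k$, we obtain $x_\ell u=(x_\ell v_0)(v_1\cdots v_k)\in\HS_1(I)\cdot I^k$. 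This proves $\HS_1(I^{k+1})\subseteq\HS_1(I)\cdot I^k$, and combined with the first paragraph it yields $\HS_1(I^{k+1})=\HS_1(I)\cdot I^k=I\cdot\HS_1(I^k)$.

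It remains to prove the exchange lemma, and I would do this by induction on $k$. Writing $u=w\cdot w'$ with $w\in\mathcal{G}(I)$ and $w'\in\mathcal{G}(I^k)$, the inductive step reduces to a two--ideal statement: for polymatroidal ideals $J_1,J_2$, if $u=v_1v_2\in\mathcal{G}(J_1J_2)$ with $v_i\in\mathcal{G}(J_i)$ and $x_\ell(u/x_t)\in\mathcal{G}(J_1J_2)$, then $u$ admits a factorization $u=v_1''v_2''$ with $v_i''\in\mathcal{G}(J_i)$ for which $x_t\mid v_1''$ and $x_\ell(v_1''/x_t)\in\mathcal{G}(J_1)$, or $x_t\mid v_2''$ and $x_\ell(v_2''/x_t)\in\mathcal{G}(J_2)$; applying this with $J_1=I$, $J_2=I^k$ and recursing (the base case $\mathcal{G}(I)$ being trivial) produces the wanted factorization. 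Translating into discrete polymatroids, let $\mathbf{b}_1,\mathbf{b}_2$ be the exponent vectors of $v_1,v_2$, which are bases of the polymatroids $P_1,P_2$ underlying $J_1,J_2$. Then $\mathbf{c}:=\mathbf{b}_1+\mathbf{b}_2$, together with the vector $\mathbf{c}'$ obtained from $\mathbf{c}$ by lowering its $t$th entry by one and raising its $\ell$th entry by one, are both bases of $P_1+P_2$, and one must show that some decomposition of $\mathbf{c}$ into a base of $P_1$ plus a base of $P_2$ can absorb the elementary exchange ``$t\mapsto\ell$'' within one of its two summands. If this exchange can be carried out directly on $\mathbf{b}_i$ for $i=1$ or $2$ (keeping it a base of $P_i$), we are done; otherwise, for each $i$ with $(\mathbf{b}_i)_t>0$ there is a tight set $A_i$ --- one with $\sum_{j\in A_i}(\mathbf{b}_i)_j=\rho_i(A_i)$, where $\rho_i$ is the rank function of $P_i$ --- satisfying $\ell\in A_i$ and $t\notin A_i$, and I would then play off the base inequality for $\mathbf{c}'$ on suitable unions of the $A_i$ against submodularity of the $\rho_i$ and a symmetric exchange between $\mathbf{b}_1$ and $\mathbf{b}_2$ to modify the decomposition $\mathbf{c}=\mathbf{b}_1+\mathbf{b}_2$ and lower its ``distance'' to a compatible decomposition of $\mathbf{c}'$, iterating until the easy case applies. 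This last point --- that the fibers of the Minkowski--sum map over $\mathbf{c}$ and over $\mathbf{c}'$ can be joined by a single local move --- is where the fine combinatorics of discrete polymatroids (the exchange property and the sublattice of tight sets of a base) must be handled with care, and I expect it to be the main obstacle.
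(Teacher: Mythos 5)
The overall architecture of your argument is sound: the inclusion $I\cdot\HS_1(I^k)\subseteq\HS_1(I^{k+1})$ does follow from Theorem \ref{Thm:resume-HS(R(I))}, your description of $\set_L(w)$ for a polymatroidal ideal in the lex order is correct (the colon $(u_1,\dots,u_{i-1}):(u_i)$ is generated by the $x_\ell$ with $v:u_i=(x_\ell)$ for some lex-larger $v$, and such $v$ are exactly the $x_\ell(u_i/x_p)\in\mathcal{G}(L)$ with $p>\ell$), and granting your ``exchange lemma for powers'' the deduction of $\HS_1(I^{k+1})\subseteq\HS_1(I)\cdot I^k$ via \eqref{eq:HerTak} is correct. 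But that lemma \emph{is} the theorem in your approach, and you have not proved it. What you need is: if $\mathbf{c}$ and $\mathbf{c}-\mathbf{e}_t+\mathbf{e}_\ell$ are both bases of the $(k+1)$-fold Minkowski sum $P\vee\dots\vee P$, then \emph{some} decomposition of $\mathbf{c}$ into bases of $P$ absorbs the exchange $t\mapsto\ell$ in a single summand. Your plan --- ``play off the base inequality for $\mathbf{c}'$ on suitable unions of the tight sets $A_i$ against submodularity \dots and lower the distance to a compatible decomposition'' --- names no invariant that decreases, exhibits no concrete modification step, and gives no termination argument; the existence of the ``single local move'' joining the two fibers is exactly the assertion to be proved, not a tool. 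Statements of this flavour (compatibility of base decompositions of polymatroid sums with elementary exchanges) sit close to genuinely delicate territory, so an explicit submodularity/augmenting argument is mandatory, and you acknowledge yourself that this is the main obstacle. As it stands the proposal is a correct reduction plus an unproved key lemma.

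For comparison, the paper avoids this combinatorial problem entirely. It first proves (Proposition \ref{Prop:HS1-polym}, using the lcm description of $\HS_1$ from \cite[Lemma 2.2]{FQ1} and one application of the symmetric exchange property) that $\HS_1(I)=(\m I)^{\le{\bf deg}(I)}$, i.e.\ $\HS_1(I)$ is the part of $\m I$ whose generators stay within the bounding multidegree of $I$. Since ${\bf deg}(I^k)=k\,{\bf deg}(I)$, the identity $\HS_1(I^{k+1})=I\cdot\HS_1(I^k)$ then reduces to the purely numerical observation that from $w=u_1\cdots u_{k+1}x_s$ with $\deg_{x_s}(w)\le(k+1)\deg_{x_s}(I)$ one can discard a suitable factor $u_j$ (one should discard a factor with $\deg_{x_s}(u_j)$ \emph{maximal}) so that the remaining product times $x_s$ still respects the cage $k\,{\bf deg}(I)$. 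If you want to salvage your route, you would need to supply a complete proof of the exchange lemma; otherwise the structural identification of $\HS_1(I)$ as a restricted product $(\m I)^{\le{\bf deg}(I)}$ is the decisive shortcut you are missing.
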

	This theorem will be an immediate consequence of the following Proposition \ref{Prop:HS1-polym}, which was shown by Bandari \cite{Ban24}. Due to its importance, we provide a proof of it.
	
	For a monomial ideal $I\subset S$, its \textit{bounding multidegree} is defined as the vector ${\bf deg}(I)=(\deg_{x_1}(I),\dots,\deg_{x_n}(I))$ where
	$$
	\deg_{x_i}(I)\ =\ \max_{u\in\mathcal{G}(I)}\deg_{x_i}(u),\quad\textup{for all}\ i=1,\dots,n.
	$$
	
	Given a monomial ideal $I\subset S$ and a vector ${\bf a}=(a_1,\dots,a_n)\in\ZZ_{\ge0}^n$, we define the \textit{restriction of $I$ at ${\bf a}$} as the monomial ideal $I^{\le{\bf a}}$ with minimal generating set
	$$
	\mathcal{G}(I^{\le{\bf a}})\ =\ \{u\in\mathcal{G}(I):\ \deg_{x_i}(u)\le a_i,\ \textup{for all}\ i=1,\dots,n\}.
	$$
	
	Clearly, $I=I^{\le{\bf a}}$ whenever ${\bf a}\ge{\bf deg}(I)$. If $I$ is polymatroidal, then $I^{\le{\bf a}}$ is polymatroidal for any ${\bf a}\in\ZZ_{\ge0}^n$, see \cite[Lemma 4.1]{FM}.\smallskip
	
	We denote by $\alpha(I)$ the smallest degree of a minimal homogeneous generator of $I$. By $\m=(x_1,\dots,x_n)$ we denote the graded maximal ideal of $S$.\smallskip
	
	\begin{Proposition}\label{Prop:HS1-polym}
		Let $I\subset S$ be a polymatroidal ideal. Then,
		$$
		\HS_1(I)\ =\ (\m I)^{\le{\bf deg}(I)}.
		$$
	\end{Proposition}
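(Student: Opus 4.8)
The plan is to establish the two inclusions $\HS_1(I)\subseteq(\mathfrak{m}I)^{\le{\bf deg}(I)}$ and $(\mathfrak{m}I)^{\le{\bf deg}(I)}\subseteq\HS_1(I)$ separately, using the explicit description of $\HS_1$ coming from linear quotients. First I would fix the lexicographic order on $\mathcal{G}(I)=\{u_1,\dots,u_m\}$ (with degrees nondecreasing; since $I$ is generated in a single degree this is automatic) for which $I$ has linear quotients. By \eqref{eq:HerTak} with $i=1$, $\HS_1(I)$ is generated by the monomials $x_\ell u$ where $u\in\mathcal{G}(I)$ and $\ell\in\set(u)$. Each such generator $x_\ell u$ clearly lies in $\mathfrak{m}I$, and its $x_j$-degree is $\deg_{x_j}(u)$ for $j\ne\ell$ and $\deg_{x_\ell}(u)+1$ for $j=\ell$; to conclude $x_\ell u\in(\mathfrak{m}I)^{\le{\bf deg}(I)}$ I need $\deg_{x_\ell}(u)+1\le\deg_{x_\ell}(I)$, i.e.\ $u$ is not maximal in the $x_\ell$-degree among generators of $I$. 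This should follow from $\ell\in\set(u)$: there is some $u_t$ with $t<$ (the index of $u$) such that $x_\ell$ divides $u_t/\gcd(u_t,u)$, forcing $\deg_{x_\ell}(u_t)>\deg_{x_\ell}(u)$, hence $\deg_{x_\ell}(u)<\deg_{x_\ell}(I)$. That gives the inclusion $\HS_1(I)\subseteq(\mathfrak{m}I)^{\le{\bf deg}(I)}$.

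For the reverse inclusion, take a minimal generator $x_\ell u$ of $(\mathfrak{m}I)^{\le{\bf deg}(I)}$, so $u\in\mathcal{G}(I)$, $1\le\ell\le n$, and $\deg_{x_\ell}(u)+1\le\deg_{x_\ell}(I)$; I must show $x_\ell u\in\HS_1(I)$, and by \eqref{eq:HerTak} it suffices to find $u'\in\mathcal{G}(I)$ and a one-element set $F=\{\ell'\}\subseteq\set(u')$ with $x_{\ell'}u'=x_\ell u$. The natural candidate is to take $u'$ with $x_\ell u'=x_\ell u$, i.e.\ $u'=u$ and $\ell'=\ell$, which requires $\ell\in\set(u)$; but $\set(u)$ depends on the chosen lex order, and $\ell$ need not be in it for that particular order. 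The remedy, which is standard for polymatroidal (more generally, for ideals with linear quotients with respect to every such order), is to observe that since $\deg_{x_\ell}(u)<\deg_{x_\ell}(I)$ there is $v\in\mathcal{G}(I)$ with $\deg_{x_\ell}(v)>\deg_{x_\ell}(u)$; by the exchange property there is an index $j$ with $\deg_{x_j}(u)<\deg_{x_j}(v)$... wait, one applies exchange in the direction that produces $x_\ell(u/x_j)\in\mathcal{G}(I)$ for some $j$ with $\deg_{x_j}(u)>\deg_{x_j}(v)\ge 0$. Set $w=x_\ell(u/x_j)\in\mathcal{G}(I)$. Then $x_j w=x_\ell u$, and I claim $j\in\set(w)$ for a suitable choice of lex order, or more robustly that $x_\ell u$ is a multigraded shift of $I$ via the pair $(w,j)$: indeed $w$ and $u$ differ by swapping one $x_j$ for one $x_\ell$, so $x_j w = x_\ell u$ sits in the linear strand, and $j\in\set(w)$ holds with respect to the lex order in which $x_\ell$ precedes $x_j$ (then $u$, which equals $x_j(w/x_\ell)$, comes before $w$, putting $j$ into $(\,\dots:w)$). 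Hence $x_\ell u = x_j w\in\HS_1(I)$.

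I would present the second inclusion cleanly by invoking the known fact (from the theory of linear quotients for polymatroidal ideals, \cite[Chapter 12]{HHBook}) that $\HS_1(I)$ does not depend on the lex order used, so it is enough to exhibit, for each minimal generator $x_\ell u$ of $(\mathfrak{m}I)^{\le{\bf deg}(I)}$, \emph{some} admissible lex order and \emph{some} generator $w$ with $\ell\in\set(w)$ and $x_\ell$-or-$x_j$ times $w$ equal to $x_\ell u$; the exchange-property argument above supplies exactly this. Finally I would note that the equality of the generating sets, together with both ideals being generated in the single degree $\alpha(I)+1$ (monomials of $\HS_1(I)$ all have degree $\alpha(I)+1$ since $I$ is equigenerated, and likewise for $(\mathfrak{m}I)^{\le{\bf deg}(I)}$), yields $\HS_1(I)=(\mathfrak{m}I)^{\le{\bf deg}(I)}$.

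The main obstacle I anticipate is the reverse inclusion: matching a generator of $(\mathfrak{m}I)^{\le{\bf deg}(I)}$ with a genuine homological shift generator requires more than just the raw exchange property — one has to be careful that the exchange can be performed \emph{lowering} the exponent at the correct coordinate and that the resulting monomial $w$ really contributes to $\HS_1$ via a variable in $\set(w)$. The cleanest route is to prove the auxiliary statement: for a polymatroidal ideal $I$, a monomial $x_\ell u$ with $u\in\mathcal{G}(I)$ lies in $\HS_1(I)$ if and only if $\deg_{x_\ell}(u)<\deg_{x_\ell}(I)$; the ``if'' direction is the exchange argument sketched above, and the ``only if'' direction is the first inclusion. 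Everything else is bookkeeping with $x_i$-degrees.
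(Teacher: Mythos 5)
Your proof is correct and follows essentially the same route as the paper's: the forward inclusion comes from the degree bound forced by the existence of an earlier generator with larger $x_\ell$-degree, and the reverse inclusion from the symmetric exchange property producing $w=x_\ell(u/x_j)\in\mathcal{G}(I)$ with $\lcm(u,w)=x_\ell u$. The only slip is in the final bookkeeping: with $x_\ell$ preceding $x_j$ one gets $w>_{\lex}u$ and hence $\ell\in\set(u)$ (not $j\in\set(w)$, which would require the opposite ordering), but either choice yields $x_\ell u\in\HS_1(I)$, so the argument stands; the paper sidesteps this bookkeeping entirely by using the $\lcm$ description of $\HS_1(I)$ from \cite[Lemma 2.2]{FQ1}.
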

	\begin{proof}
		Let $\mathcal{G}(I)=\{u_1,\dots,u_m\}$. By \cite[Lemma 2.2]{FQ1}, we have
		$$
		\HS_1(I)=(\lcm(u_i,u_j):\ 1\le i<j\le m).
		$$
		
		Let $w\in\mathcal{G}(\HS_1(I))$. Since $I$ has linear resolution, $\HS_1(I)$ is generated in degree $\alpha(I)+1$. Hence $w=\lcm(u_i,u_j)=x_su_i$ for some $s$, and $w\in\m I$. Notice that $\deg_{x_p}(w)=\max\{\deg_{x_p}(u_i),\deg_{x_p}(u_j)\}\le\deg_{x_p}(I)$ for all $p$. Hence $w\in(\m I)^{\le{\bf deg}(I)}$.
		
		Conversely, let $w=x_su\in\mathcal{G}((\m I)^{\le{\bf deg}(I)})$, with $u\in\mathcal{G}(I)$. Firstly, notice that $\deg_{x_s}(u)=\deg_{x_s}(w)-1<\deg_{x_s}(I)$. We can find a monomial $v\in\mathcal{G}(I)$ with $\deg_{x_s}(v)=\deg_{x_s}(I)>\deg_{x_s}(u)$. Applying the symmetric exchange property, there exists $r$ with $\deg_{x_r}(u)>\deg_{x_r}(v)$ such that $u'=x_s(u/x_r)\in\mathcal{G}(I)$. Since $\lcm(u,u')=x_su$, using again \cite[Lemma 2.2]{FQ1} we see that $(\m I)^{\le{\bf deg}(I)}\subset\HS_1(I)$, as desired.
	\end{proof}
	
	It is clear that if $I\subset S$ is an equigenerated monomial ideal, then
	$$
	{\bf deg}(I^k)\ =\ k\,{\bf deg}(I)\ =\ (k\deg_{x_1}(I),\dots,k\deg_{x_n}(I)),
	$$
	for all $k\ge1$. We are now ready to prove Theorem \ref{Thm:HS1-fg}.
	
	\begin{proof}[Proof of Theorem \ref{Thm:HS1-fg}]
		Using Proposition \ref{Prop:HS1-polym}, we have
		\begin{align*}
			\HS_1(I^{k+1})\ &=\ (\m I^{k+1})^{\le{\bf deg}(I^{k+1})}\ = \ (\m I^{k+1})^{\le(k+1){\bf deg}(I)}\\
			&=\ I^{\le{\bf deg}(I)}(\m I^k)^{\le k{\bf deg}(I)}\\
			&=\ I\cdot\HS_1(I^k),
		\end{align*}
		for all $k\ge1$. In the above equation, only the equality
		$$
		(\m I^{k+1})^{\le(k+1){\bf deg}(I)}\ = \ I^{\le{\bf deg}(I)}(\m I^k)^{\le k{\bf deg}(I)}
		$$
		needs to be justified. That the right-hand is included in the left-hand side is clear. Conversely, let $w\in\mathcal{G}((\m I^{k+1})^{\le(k+1){\bf deg}(I)})$. Then $w=u_1\cdots u_{k+1}x_s$ with $u_j\in\mathcal{G}(I)$ and $\deg_{x_s}(w)\le(k+1)\deg_{x_s}(I)$. Since $\deg_{x_s}(u_j)\le\deg_{x_s}(I)$ for all $j=1,\dots,k+1$, we can find some integer $j$ such that $\deg_{x_s}(u_{j})<\deg_{x_s}(I)$. Up to relabeling, we may assume $j=k+1$. Setting $w'=u_1\cdots u_kx_s$, we see that $w'\in\mathcal{G}((\m I^k)^{\le k{\bf deg}(I)})$ and so $w=u_{k+1}w'\in I^{\le{\bf deg}(I)}(\m I^k)^{\le k{\bf deg}(I)}$.
	\end{proof}
	
	\begin{Corollary}
		Let $I\subset S$ be a polymatroidal ideal. Then
		$$
		\HS_1(\mathcal{R}(I))\ =\ ((\m I)^{\le{\bf deg}(I)}\mathcal{R}(I))(1),
		$$
		where ``\,$(1)$" denotes the shift by degree one.
	\end{Corollary}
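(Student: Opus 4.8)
The plan is to deduce this directly from the two results just proved. By Theorem \ref{Thm:HS1-fg} we have the recursion $\HS_1(I^{k+1})=I\cdot\HS_1(I^k)$ for all $k\ge1$; iterating it from $\HS_1(I^1)=\HS_1(I)$ yields
$$
\HS_1(I^k)\ =\ I^{k-1}\cdot\HS_1(I)\qquad\textup{for all }k\ge1,
$$
and then Proposition \ref{Prop:HS1-polym} lets us replace $\HS_1(I)$ by $(\m I)^{\le{\bf deg}(I)}$, so that $\HS_1(I^k)=I^{k-1}(\m I)^{\le{\bf deg}(I)}$.

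Next I would unwind the $\mathcal{R}(I)$-module structure. Writing $J=(\m I)^{\le{\bf deg}(I)}\subset S=\mathcal{R}(I)_0$, the ideal $J\mathcal{R}(I)$ of $\mathcal{R}(I)$ that $J$ generates has $k$th graded (Rees) component $J\cdot I^k$ for every $k\ge0$. With the convention that the shift $(1)$ raises Rees degrees by one, i.e. $M(1)_k=M_{k-1}$, the module $(J\mathcal{R}(I))(1)$ has $0$th component equal to $0$ and $k$th component equal to $J\cdot I^{k-1}=\HS_1(I^k)$ for $k\ge1$, by the displayed formula above. Since $\HS_1(\mathcal{R}(I))=\bigoplus_{k\ge1}\HS_1(I^k)$ is concentrated in Rees degrees $\ge1$, the two graded modules agree in every degree. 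Finally I would observe that these degreewise identifications are compatible with the action of $\mathcal{R}(I)=\bigoplus_{\ell\ge0}I^\ell$ — multiplication by an element of $I^\ell$ carries the degree-$k$ piece to the degree-$(k+\ell)$ piece in the same way on both sides — so the identification is an isomorphism of graded $\mathcal{R}(I)$-modules, which is the claim.

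The argument is entirely formal given Theorem \ref{Thm:HS1-fg} and Proposition \ref{Prop:HS1-polym}; there is no substantive obstacle. The only point meriting attention is the bookkeeping of the degree shift: since $J\mathcal{R}(I)$ is generated by the elements of $J\subset\mathcal{R}(I)_0$, which sit in Rees degree $0$, while $\HS_1(\mathcal{R}(I))$ is generated in Rees degree $1$ (namely by $\HS_1(I^1)=\HS_1(I)$, as in Theorem \ref{Thm:HS1-fg}), the shift by $(1)$ is precisely what moves the generators of $J\mathcal{R}(I)$ into the correct degree.
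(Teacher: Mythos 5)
Your argument is correct and is exactly the intended one: the paper states this corollary without proof as an immediate consequence of Theorem \ref{Thm:HS1-fg} and Proposition \ref{Prop:HS1-polym}, and your degreewise identification $\HS_1(I^k)=I^{k-1}(\m I)^{\le{\bf deg}(I)}$ together with the bookkeeping of the shift $(1)$ is precisely what makes it immediate. Nothing further is needed.
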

	
	\section{Finite generation of $\HS_i(\mathcal{R}(I))$}\label{sec5}
	In this section, we focus our attention on the generating degrees of the homological shift algebras of a polymatroidal ideal. Due to Theorem \ref{Thm:HS1-fg} and several experimental evidence, we expect that
	\begin{Conjecture}\label{Conj:GenDeg}
		Let $I\subset S$ be a polymatroidal ideal. Then $\HS_i(\mathcal{R}(I))$ is generated in degrees $\le i$ as a $\mathcal{R}(I)$-module, for all $i>0$.
	\end{Conjecture}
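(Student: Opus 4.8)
We outline the approach we would take toward Conjecture \ref{Conj:GenDeg}. The plan is to translate the statement into an equality of rank functions of discrete polymatroids and then to feed in the structural description of $\HS_i$ obtained in \cite{CMS}.

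Fix $i$ and $k\ge i$. Since $I^{k}$ is polymatroidal, $\HS_i(I^{k})$ is polymatroidal by Theorem \ref{Thm:CMS}, and $I\cdot\HS_i(I^{k})$ is polymatroidal too, being a product of polymatroidal ideals \cite[Theorem 12.6.3]{HHBook}. Both $\HS_i(I^{k+1})$ and $I\cdot\HS_i(I^{k})$ are generated in degree $(k+1)\alpha(I)+i$, and $I\cdot\HS_i(I^{k})\subseteq\HS_i(I^{k+1})$ by Theorem \ref{Thm:resume-HS(R(I))}. Two polymatroidal ideals generated in the same degree coincide precisely when the associated discrete polymatroids have the same rank function $\rho$ \cite[Chapter 12]{HHBook}, and the rank function of a product of polymatroidal ideals is the sum of the rank functions. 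Hence Conjecture \ref{Conj:GenDeg} is equivalent to
\[
\rho_{\HS_i(I^{k+1})}\ =\ \rho_{I}+\rho_{\HS_i(I^{k})}\qquad\text{for all }k\ge i;
\]
equivalently, writing $\delta_k(A):=\rho_{\HS_i(I^{k})}(A)-k\,\rho_{I}(A)$ for $A\subseteq[n]$, that $\delta_k(A)$ is independent of $k$ once $k\ge i$. The inclusion $I\cdot\HS_i(I^{k})\subseteq\HS_i(I^{k+1})$ shows that $\delta_k(A)$ is non-decreasing in $k$, and $\HS_i(I^{k})\subseteq\m^{i}I^{k}$ shows $\delta_k(A)\le i$; being integer-valued, $\delta_k(A)$ eventually stabilises, so the whole content is that this has happened already at $k=i$.

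To establish this, the idea is to use that the proof of Theorem \ref{Thm:CMS} determines the discrete polymatroid of $\HS_i(J)$ for a polymatroidal ideal $J$, expressing $\rho_{\HS_i(J)}$ through $\rho_{J}$ and the combinatorics of the flats of $J$ — this is where the M\"obius function of a polymatroid enters. One would substitute $J=I^{k}$, for which $\rho_{J}=k\,\rho_{I}$ while the flats, and the M\"obius function, are those of $I$ and do not depend on $k$, and then read off the $k$-dependence of $\rho_{\HS_i(I^{k})}(A)$. Since passing from $I^{k}$ to $\HS_i(I^{k})$ perturbs the relevant coordinate degrees by at most $i$, one expects that for $k\ge i$ all the extrema in the resulting formula are attained at locations independent of $k$, giving $\rho_{\HS_i(I^{k})}(A)=k\,\rho_{I}(A)+c(A)$ with $c(A)$ a constant; that is exactly the required stabilisation. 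A parallel, more hands-on route is to argue with linear quotients: using \eqref{eq:HerTak} one writes a minimal generator of $\HS_i(I^{k+1})$ as ${\bf x}_F v$ with $v=u_1\cdots u_{k+1}$, $u_j\in\mathcal{G}(I)$, $F\subseteq\set(v)$, $|F|=i$ (here $v/u_j\in\mathcal{G}(I^{k})$ for degree reasons), and, using the description that for polymatroidal $J$ with the lexicographic order $\ell\in\set_J(v)$ holds iff $x_\ell v/x_t\in\mathcal{G}(J)$ for some $t>\ell$ with $\deg_{x_t}(v)\ge1$, one wants to choose an index $j$ such that $F\subseteq\set_{I^{k}}(v/u_j)$; then ${\bf x}_F v=u_j\cdot({\bf x}_F(v/u_j))\in I\cdot\HS_i(I^{k})$, which is the inclusion we need, and the hypothesis $k\ge i$ should guarantee that enough factors are available for such a $j$ to exist.

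The step I expect to be the real obstacle is precisely this last point: the \emph{simultaneous} survival of $i$ exchanges. For $i=1$ a single symmetric exchange has to remain valid after deleting one of the $k+1$ factors, and the bounding-multidegree argument of Theorem \ref{Thm:HS1-fg} settles it. For $i\ge2$, deleting one factor can spoil several witnessing exchanges at once — say if they all lower the same $x_t$-degree and all that $x_t$-mass sits in the deleted factor — and reconstructing them, equivalently understanding precisely how $\set_{I^{k}}(\,\cdot\,)$, or the rank function $\rho_{\HS_i(I^{k})}$, grows with $k$, appears to demand a uniform, family-over-$k$ version of the deletion/contraction bookkeeping and the M\"obius-function machinery of \cite{CMS}. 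This becomes manageable exactly when the flats of $I$, hence the rank formula for $\HS_i$, are explicit, which is why the conjecture can be settled for principal Borel ideals, for ideals with the strong exchange property, and for matroidal edge ideals.
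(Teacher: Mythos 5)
Your text is a program, not a proof, and you should be aware that the statement you are addressing is posed in the paper as an open conjecture: the authors state explicitly that they have no general strategy, and they prove it only for $i=1$ (Theorem \ref{Thm:HS1-fg}) and for the three families of Section \ref{sec5}. The reductions you carry out are correct and match the paper's framing --- generation in degrees $\le i$ is equivalent to $\HS_i(I^{k+1})=I\cdot\HS_i(I^k)$ for all $k\ge i$; the rank-function reformulation is legitimate since polymatroidal ideals generated in the same degree are determined by their rank functions and the product corresponds to the polymatroid sum; and the bounds $\delta_k(A)\le\delta_{k+1}(A)\le i$ follow from $I\cdot\HS_i(I^k)\subseteq\HS_i(I^{k+1})$ and $\HS_i(I^k)\subseteq\m^iI^k$. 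But these observations only re-derive the eventual stabilization already guaranteed by Theorem \ref{Thm:resume-HS(R(I))}; the entire content of the conjecture is that stabilization has occurred by $k=i$, and neither of your two routes establishes this. The rank-function route presupposes a closed formula for $\rho_{\HS_i(J)}$ in terms of $\rho_J$, which \cite{CMS} does not supply in that form, and the linear-quotients route stalls exactly where you say it does: exhibiting a factor $u_j$ whose deletion preserves all $i$ exchange witnesses simultaneously.

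Two specific inaccuracies in the sketch. First, the claim that ``the flats, and the M\"obius function, are those of $I$ and do not depend on $k$'' is false: the polymatroid of $I^k$ is the $k$-fold sum $P\vee\cdots\vee P$, and its dual polymatroid and M\"obius function genuinely scale with $k$. What the paper's Proposition \ref{Prop:VerType} actually exploits is that, for Veronese-type ideals, the interval of the dual poset lying \emph{above} the relevant point is a truncated box of bounded shape whose M\"obius value is computed once and for all by Lemma \ref{Mobius}; this localization is special to the strong exchange property and does not carry over to general polymatroids. Second, your description of $\set_J(v)$ in the lex order is correct, but it does not by itself control how $\set_{I^{k}}(v/u_j)$ compares with $\set_{I^{k+1}}(v)$; the paper needs the explicit computations of $\set$ for principal Borel ideals and Lemma \ref{Lem:Ay} for matroidal edge ideals to make that comparison, and the delicate case analysis in the proof of Proposition \ref{Prop:I(G)} (where a deletion can destroy a witness via condition (i) of Lemma \ref{Lem:Ay}) shows that the ``enough factors are available'' heuristic requires genuine work even there. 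In short, your diagnosis of the obstruction agrees with the paper's, but no proof is supplied and the conjecture remains open.
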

	
	This conjecture in the case $i=n-1$ has already been posed by Chu, Herzog and Lu in \cite{CHL}, and solved for the family of pruned path lattice polymatroidal ideals.\smallskip
	
	Theorem \ref{Thm:HS1-fg} settles Conjecture \ref{Conj:GenDeg} in the case $i=1$.\smallskip
	
	At the moment, we do not have a general strategy to settle Conjecture \ref{Conj:GenDeg} for $i\ge2$. Therefore, in what follows, we establish this conjecture for some large families of polymatroidal ideals. These families are:
	\begin{enumerate}
		\item[\textup{(a)}] Principal Borel ideals.
		\item[\textup{(b)}] Polymatroidal ideals satisfying the strong exchange property.
		\item[\textup{(c)}] Matroidal edge ideals.
	\end{enumerate}
	
	\subsection{Principal Borel ideals} Let $u=x_{j_1}\cdots x_{j_d}\in S$ be a monomial of degree $d$, with $1\le j_1\le\dots\le j_d\le n$. The \textit{principal Borel ideal} generated by $u$ is the monomial ideal
	$$
	B(u)\ =\ (x_{p_1}\cdots x_{p_d}:\ 1\le p_1\le\dots\le p_d\le n,\ p_s\le j_s\ \textup{for all}\ s=1,\dots,d).
	$$
	
	This ideal is polymatroidal, and therefore it has linear quotients with respect to the lexicographic order of $\mathcal{G}(B(u))$. For a monomial $v\in S$, $v\ne1$, we put $\max(v)=\max\{j:x_j\ \textup{divides}\ v\}$. It is not difficult to see that for all $v\in\mathcal{G}(B(u))$ we have $\set(v)=[\max(v)-1]=\{1,2,\dots,\max(v)-1\}$.
	
	Now, we can establish Conjecture \ref{Conj:GenDeg} for principal Borel ideals.
	\begin{Proposition}\label{Prop:B(u)}
		Let $I=B(u)$. Then $\HS_i(I^{k+1})=I\cdot\HS_i(I^k)$ for all $k\ge1$.
	\end{Proposition}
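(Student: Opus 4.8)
The plan is to mimic the proof of Theorem~\ref{Thm:HS1-fg}, replacing the description of $\HS_1$ via \cite[Lemma~2.2]{FQ1} with the general formula \eqref{eq:HerTak} for $\HS_i$ of an ideal with linear quotients. For $I=B(u)$, the key structural fact is that for every $v\in\mathcal{G}(B(u))$ one has $\set(v)=[\max(v)-1]$, so by \eqref{eq:HerTak}
$$
\HS_i(I)\ =\ ({\bf x}_F v:\ v\in\mathcal{G}(I),\ F\subset[\max(v)-1],\ |F|=i).
$$
The inclusion $I\cdot\HS_i(I^k)\subseteq\HS_i(I^{k+1})$ is already known from Theorem~\ref{Thm:resume-HS(R(I))}, so the whole content is the reverse inclusion $\HS_i(I^{k+1})\subseteq I\cdot\HS_i(I^k)$.

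First I would record that $I^k=B(u^k)$ up to the obvious identification: powers of a principal Borel ideal are again principal Borel, generated by the $k$-fold product of $u$ with itself, and the $\set$-description is the same, namely $\set(w)=[\max(w)-1]$ for $w\in\mathcal{G}(I^k)$ (here $\max(w)$ is the largest index of a variable dividing $w$, and one must check that the linear-quotient order on $\mathcal{G}(I^k)$ is still the lex order, which follows since $I$ has linear powers and the product of Borel ideals is Borel). Granting this, a generator of $\HS_i(I^{k+1})$ has the form ${\bf x}_F w$ with $w\in\mathcal{G}(I^{k+1})$ and $F\subset[\max(w)-1]$, $|F|=i$. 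Write $w=w_1\cdots w_{k+1}$ with $w_j\in\mathcal{G}(I)$.

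The crux is then a splitting argument: I want to peel off one factor $w_j\in\mathcal{G}(I)$ so that $F\subset[\max(w/w_j)-1]$, i.e.\ so that deleting $w_j$ does not lower the maximal index below $\max(F)$. Let $\ell=\max(F)+1\le\max(w)$; since $x_{\max(w)}$ divides $w=w_1\cdots w_{k+1}$, it divides some factor, and more to the point I can choose the factorization (using that $B(u)$ is Borel, hence closed under the moves $x_a\mapsto x_b$ with $b\le a$) so that at least two of the $k+1$ factors have their max index $\ge\ell$, unless all the ``large'' variables are concentrated in a single factor. The standard Borel trick handles this: if only one factor $w_j$ satisfies $\max(w_j)\ge\ell$, then because $I=B(u)$ has its generator of degree $d=\alpha(I)$ and $k+1\ge2$, I can apply a Borel exchange inside $w_j$, moving an index down and an index in another factor up, to redistribute so that two factors reach index $\ge\ell$ — this does not change $w$ as a monomial. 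Once two factors have max $\ge\ell>\max(F)$, delete one of them, call it $w_{k+1}$ after relabeling; set $w'=w/w_{k+1}=w_1\cdots w_k\in\mathcal{G}(I^k)$, and note $\max(w')\ge\ell$, so $F\subset[\ell-1]=[\max(F)]\subset[\max(w')-1]$. Hence ${\bf x}_F w'\in\HS_i(I^k)$ and ${\bf x}_F w=w_{k+1}\cdot({\bf x}_F w')\in I\cdot\HS_i(I^k)$.

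I expect the main obstacle to be the redistribution step: verifying rigorously that one can always arrange the factorization $w=w_1\cdots w_{k+1}$ so that two factors carry a variable of index $\ge\max(F)+1$, or else that $F$ automatically fits into $[\max(w_j)-1]$ for the unique large factor together with ${\bf x}_F(w/w_j)$ still lying in $\HS_i(I^k)$ — the bookkeeping on which indices of $F$ survive under the Borel moves, and confirming that the moves keep every partial product in $\mathcal{G}(I^k)$, is where care is needed. It may be cleanest to avoid factorizations entirely and argue directly on exponent vectors: given ${\bf x}_F w\in\mathcal{G}(\HS_i(I^{k+1}))$ with $\deg w=(k+1)d$ and $F\subset[\max(w)-1]$, produce $v\in\mathcal{G}(I)$ dividing $w$ with $\max(w/v)\ge\max(F)+1$ by a counting argument on how many generators of $\mathcal{G}(I)$ can simultaneously divide $w$ while all being supported below $\max(F)+1$; since such generators have degree $d$ and $w/({\rm such\ product})$ has too few variables of small index once $k+1$ is large, a suitable $v$ exists. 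Either way, the degree-$i$ bound on generators then drops out, completing the induction.
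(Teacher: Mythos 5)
Your setup is exactly the paper's: use $I^s=B(u^s)$, the fact that $\set(w)=[\max(w)-1]$ for $w\in\mathcal{G}(B(u^s))$, and equation \eqref{eq:HerTak} to reduce everything to peeling one factor off $w=w_1\cdots w_{k+1}$ while keeping $F$ inside the $\set$ of the remaining product. However, the step you yourself flag as the main obstacle --- the ``redistribution'' so that two factors have max index $\ge\ell=\max(F)+1$ --- is where the proposal goes astray, in two ways. First, the move you describe is not legitimate as stated: swapping a variable of large index from $w_j$ into another factor $w_l$ replaces $w_l$ by $x_a(w_l/x_b)$ with $a>b$, which is an \emph{anti}-Borel move and need not stay in $\mathcal{G}(B(u))$; so the claim that the factorization can always be rearranged this way is unjustified. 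Second, and more importantly, the entire redistribution is unnecessary, because you have set up the deletion backwards: you are trying to delete a factor of large max and keep another large one, when you should simply \emph{keep} a factor achieving the max and delete any other factor.

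Concretely: since $F\subseteq[\max(w)-1]$ we have $\max(F)<\max(w)$. Some factor, say $w_1$ after relabeling, satisfies $\max(w_1)=\max(w)$. Delete $w_{k+1}$ (any factor other than $w_1$; this is possible since $k+1\ge2$). Then $w'=w_1\cdots w_k\in\mathcal{G}(I^k)$ still contains $w_1$, so $\max(w')=\max(w)$ and hence $\set(w')=[\max(w)-1]=\set(w)\supseteq F$, giving ${\bf x}_Fw'\in\HS_i(I^k)$ and ${\bf x}_Fw=w_{k+1}({\bf x}_Fw')\in I\cdot\HS_i(I^k)$. This is precisely the paper's argument, and it requires no case distinction, no Borel exchanges between factors, and no counting on exponent vectors. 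With that correction your proof closes; as written, it does not, because the step you rely on is neither proved nor (as formulated) true.
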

	\begin{proof}
		It is immediate to see that $I^s=B(u)^s=B(u^s)$ for all $s\ge1$. Now, let $w\in\HS_i(I^{k+1})$. By the previous description and equation (\ref{eq:HerTak}) we can write $w={\bf x}_Fv$ with $v\in\mathcal{G}(I^{k+1})=\mathcal{G}(B(u^{k+1}))$ and $F\subset[\max(v)-1]$ is a subset of cardinality $i$. We can write $v=v_1\cdots v_{k+1}$ with each $v_s\in\mathcal{G}(I)$. We can assume that $\max(v)=\max(v_1)$. Then $v'=v_1\cdots v_{k}\in\mathcal{G}(I^k)=\mathcal{G}(B(u^k))$. Since $\set(v')=\set(v)=[\max(v)-1]$ we obtain that $w'={\bf x}_Fv'\in\HS_i(I^k)$. Hence $w=v_{k+1}w'\in I\cdot\HS_i(I^k)$. This shows that $\HS_i(I^{k+1})\subset I\cdot\HS_i(I^k)$ for all $k\ge1$. Since the converse inclusion holds as well for all $k\ge1$, the assertion follows.
	\end{proof}
	
	\subsection{Polymatroidal ideals satisfying the strong exchange property}
	We say that a monomial ideal $I\subset S$ generated in a single degree satisfy the \textit{strong exchange property} if: for all $u,v\in\mathcal{G}(I)$, all $i$ such that $\deg_{x_i}(u)>\deg_{x_i}(v)$ and all $j$ such that $\deg_{x_j}(u)<\deg_{x_j}(v)$ we have $x_j(u/x_i)\in I$.
	
	It is clear that ideals satisfying the strong exchange property, satisfy the exchange property as well, and therefore are polymatroidal.
	
	Given a vector ${\bf a}=(a_1,\dots,a_n)\in\ZZ_{\ge0}^n$ and an integer $d$ with $a_1+\dots+a_n\ge d$, the ideal of \textit{Veronese type} (determined by ${\bf a}$ and $d$) is the polymatroidal ideal $I_{{\bf a},d}$ with minimal monomial generating set
	$$
	\mathcal{G}(I_{{\bf a},d})\ =\ \{{\bf x^b}\in S:\ |{\bf b}|=d,\ {\bf b}\le{\bf a}\}.
	$$
	Here, if ${\bf b}=(b_1,\dots,b_n)\in\ZZ_{\ge0}^n$ we put ${\bf b}(i)=b_i$, $|{\bf b}|=\sum_{i=1}^n{\bf b}(i)$ is the modulus of ${\bf b}$ and ${\bf b}\le{\bf a}$ means that ${\bf b}(i)\le{\bf a}(i)$ for all $i$.
	
	Ideals of Veronese type have been studied in \cite{HHV,HRV}. By the work \cite{HHV}, it is known that ideals satisfying the strong exchange property are essentially ideals of Veronese type in the following sense.
	\begin{Proposition}\label{Prop:HHV}
		Let $I\subset S$ be a monomial ideal generated in a single degree. Then $I$ satisfies the strong exchange property, if and only if, $I=(u)I_{{\bf a},d}$ for some monomial $u\in S$, and some ${\bf a}\in\ZZ_{\ge0}^n$ and $d>0$.
	\end{Proposition}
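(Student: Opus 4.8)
The plan is to treat the two implications separately; ``$\Leftarrow$'' is routine and ``$\Rightarrow$'' is the substantial part.

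For ``$\Leftarrow$'', I would first check directly that every ideal of Veronese type $I_{{\bf a},d}$ enjoys the strong exchange property: if ${\bf x^b},{\bf x^c}\in\mathcal{G}(I_{{\bf a},d})$ and $b_i>c_i$, $b_j<c_j$, then ${\bf b}-e_i+e_j$ still has modulus $d$ and is still $\le{\bf a}$ (its $i$-th entry drops, while its $j$-th entry equals $b_j+1\le c_j\le a_j$), so $x_j({\bf x^b}/x_i)\in\mathcal{G}(I_{{\bf a},d})$. Then I would observe that the strong exchange property is stable under multiplication by a monomial $u$: since $\mathcal{G}((u)I)=\{ug:g\in\mathcal{G}(I)\}$, the inequalities $\deg_{x_i}(ug)>\deg_{x_i}(uh)$ and $\deg_{x_j}(ug)<\deg_{x_j}(uh)$ are equivalent to $\deg_{x_i}(g)>\deg_{x_i}(h)$ and $\deg_{x_j}(g)<\deg_{x_j}(h)$, so an exchange for $I$ transports verbatim to $(u)I$. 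Combining these two facts gives ``$\Leftarrow$''.

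For ``$\Rightarrow$'', let $I$ have the strong exchange property, with $\mathcal{G}(I)$ equigenerated in degree $d$. I would set $u=\gcd\mathcal{G}(I)$ and write $I=(u)J$ with $\mathcal{G}(J)=\{g/u:g\in\mathcal{G}(I)\}$; reading the above computation backwards, $J$ again has the strong exchange property, and now $\min_{g\in\mathcal{G}(J)}\deg_{x_i}(g)=0$ for every $i$. It then suffices to prove $J=I_{{\bf a},d'}$, where $a_i:=\deg_{x_i}(J)$ and $d'$ is the common degree of $\mathcal{G}(J)$. The inclusion $\mathcal{G}(J)\subseteq\mathcal{G}(I_{{\bf a},d'})$ is immediate, every generator of $J$ having modulus $d'$ and being $\le{\bf a}$; the content is the reverse inclusion, whose key step is an \emph{interpolation lemma}: if $g,h\in\mathcal{G}(J)$ have exponent vectors ${\bf c},{\bf d}$, then every ${\bf e}\in\ZZ_{\ge0}^n$ with $|{\bf e}|=d'$ and ${\bf c}\wedge{\bf d}\le{\bf e}\le{\bf c}\vee{\bf d}$ is again the exponent vector of a generator of $J$. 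I would prove this by induction on $\|{\bf e}-{\bf c}\|_1$: if ${\bf e}\ne{\bf c}$, pick indices $i,j$ with $c_i<e_i$ and $c_j>e_j$; then $e_i\le\max(c_i,d_i)$ forces $d_i>c_i$ and $e_j\ge\min(c_j,d_j)$ forces $d_j<c_j$, so the strong exchange property applied to $g,h$ (the index where $g$ exceeds $h$ being $j$, the one where it falls short being $i$) produces $x_i(g/x_j)\in J$, which is a minimal generator since $J$ is equigenerated; its exponent ${\bf c}-e_j+e_i$, together with ${\bf d}$, still sandwiches ${\bf e}$ and is strictly closer to ${\bf e}$, so the induction closes with $h$ kept fixed.

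Granting the interpolation lemma, $\mathcal{G}(J)$ is ``interval--closed'', and it remains to realize every ${\bf b}$ with ${\bf 0}\le{\bf b}\le{\bf a}$, $|{\bf b}|=d'$, as an interpolant of two generators of $J$, using that by definition of $a_i$ and the normalization there are, for each $i$, generators of $J$ with $\deg_{x_i}=0$ and with $\deg_{x_i}=a_i$. I would do this by induction on $n$: the ``deletion'' ideal generated by the $g\in\mathcal{G}(J)$ with $\deg_{x_n}(g)=0$, and the ``contraction'' ideal obtained from the $g$ with $\deg_{x_n}(g)=a_n$ upon dividing out $x_n^{a_n}$, again satisfy the strong exchange property in fewer variables, their relevant $x_i$-degrees coincide with the $a_i$, so by induction they are themselves Veronese--type slices; this pins down all generators of $J$ with $\deg_{x_n}\in\{0,a_n\}$, and for an arbitrary ${\bf b}$ one chooses a generator from each extreme slice so that ${\bf b}$ lies coordinatewise between them and applies the interpolation lemma. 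The part I expect to be the main obstacle is exactly this completion step: the interpolation lemma is clean, but turning ``interval--closed with the correct extreme generators'' into ``equals the full Veronese slice'' requires a careful feasibility argument --- equivalently, showing the rank function $\rho$ of the discrete polymatroid of $J$ satisfies $\rho(A)=\min\bigl(d',\sum_{i\in A}a_i\bigr)$ for every $A\subseteq[n]$ --- and in running the induction one must make sure the auxiliary generators realizing the extreme $x_i$-degrees do not over-commit the coordinates one is trying to interpolate. Everything else is bookkeeping.
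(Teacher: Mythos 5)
First, note that the paper does not prove this proposition at all: it is quoted from Herzog--Hibi--Vladoiu \cite{HHV}, so there is no in-paper argument to compare against and you are in effect reproving a cited result from scratch.

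Your ``$\Leftarrow$'' direction and the reduction to the normalized case $I=(u)J$ with $u=\gcd\mathcal{G}(I)$ are correct, and your interpolation lemma is correct as stated: the verification that $\mathbf{c}-\mathbf{e}_j+\mathbf{e}_i$ together with $\mathbf{d}$ still sandwiches $\mathbf{e}$ goes through, and since $J$ is equigenerated the exchanged monomial is automatically a minimal generator. (Incidentally, for equigenerated sets your lemma shows that strong exchange is \emph{equivalent} to this interval-closedness.) The genuine gap is exactly where you suspect it: the completion step is not a proof but a plan, and the plan as described would not run. The deletion ideal $\{g:\deg_{x_n}(g)=0\}$ and the contraction ideal need not attain the extreme degrees $0$ and $a_i$ in the remaining coordinates (a generator realizing $\deg_{x_i}=a_i$ may be forced to use $x_n$), and the deletion may even be empty when $a_1+\cdots+a_{n-1}<d'$; so the inductive hypothesis cannot be applied to them in the form you need, and ``choosing a generator from each extreme slice so that $\mathbf{b}$ lies between them'' is precisely the unproved feasibility claim.

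The step can be closed without any induction on $n$, staying entirely inside your framework. Given a target $\mathbf{b}$ with $|\mathbf{b}|=d'$ and $\mathbf{0}\le\mathbf{b}\le\mathbf{a}$, pick $\mathbf{c}\in B:=\{\text{exponent vectors of }\mathcal{G}(J)\}$ minimizing $\|\mathbf{b}-\mathbf{c}\|_1$ and suppose $\mathbf{c}\ne\mathbf{b}$; choose $i,j$ with $c_i<b_i$ and $c_j>b_j$. If some $\mathbf{d}\in B$ has $d_i>c_i$ and $d_j<c_j$, one exchange moves $\mathbf{c}$ strictly closer to $\mathbf{b}$, a contradiction. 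Otherwise take $\mathbf{d}^{(1)}\in B$ with $d^{(1)}_i=a_i>c_i$ (hence $d^{(1)}_j\ge c_j$) and $\mathbf{d}^{(2)}\in B$ with $d^{(2)}_j=0<c_j$, and repeatedly apply strong exchange to the current vector $\mathbf{v}$ (starting from $\mathbf{d}^{(1)}$) against the fixed witness $\mathbf{d}^{(2)}$: as long as $v_j>d^{(2)}_j$ there is some $k$ with $v_k<d^{(2)}_k$ (the moduli are equal), so $\mathbf{v}-\mathbf{e}_j+\mathbf{e}_k\in B$; this never decreases the $i$-th coordinate and lowers the $j$-th by one each step, so after finitely many steps one lands in $B$ with $i$-th coordinate $>c_i$ and $j$-th coordinate $<c_j$, contradicting the assumption. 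With this replacement for your deletion/contraction scheme the proof is complete; as written, however, the argument has a real hole at its last step.
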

	
	We are going to prove the following result.
	\begin{Proposition}\label{Prop:VerType}
		Let $I$ be a polymatroidal ideal satisfying the strong exchange property. Then $\HS_i(I^{k+1})=I\cdot\HS_i(I^k)$ for all $k\ge1$.
	\end{Proposition}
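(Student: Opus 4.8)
The plan is to reduce the statement, via Proposition \ref{Prop:HHV}, to ideals of Veronese type, and there to make both the powers and the homological shift ideals fully explicit, so that the equality becomes a combinatorial assertion about exponent vectors. First I would write $I=(u)\,I_{{\bf a},d}$ as in Proposition \ref{Prop:HHV}. Multiplying a monomial ideal by a fixed monomial merely translates its multigraded minimal free resolution, so $\HS_i\big((u)J\big)=u\cdot\HS_i(J)$ for every monomial ideal $J$, and $\big((u)J\big)^{k}=(u^{k})J^{k}$; hence the identity for $I$ reduces to the same identity for $J=I_{{\bf a},d}$. A short transportation (integral flow) argument gives $I_{{\bf a},d}^{k}=I_{k{\bf a},\,kd}$ for all $k\ge1$, so the task becomes to prove
$$
\HS_i\big(I_{(k+1){\bf a},\,(k+1)d}\big)\ =\ I_{{\bf a},d}\cdot\HS_i\big(I_{k{\bf a},\,kd}\big).
$$

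The second step is a closed formula for the homological shift ideals of a Veronese type ideal. Since $I_{{\bf b},e}$ has linear quotients for the lexicographic order of $\mathcal{G}(I_{{\bf b},e})$, computing the colon ideals directly shows that $\set(v)=\{\,\ell:\ell<\max(v),\ \deg_{x_\ell}(v)<{\bf b}(\ell)\,\}$ for every $v\in\mathcal{G}(I_{{\bf b},e})$. Feeding this into (\ref{eq:HerTak}) and simplifying, one obtains, writing ${\bf b}(w)$ for the exponent vector of a monomial $w$,
$$
\mathcal{G}\big(\HS_i(I_{{\bf b},e})\big)\ =\ \{\,w\ :\ {\bf b}(w)\le{\bf b},\ \deg(w)=e+i,\ |\supp(w)|\ge i+1\,\},
$$
or equivalently $\HS_i(I_{{\bf b},e})=\big(I_{{\bf 1},\,i+1}\cdot\m^{\,e-1}\big)^{\le{\bf b}}$ with ${\bf 1}=(1,\dots,1)$. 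For $i=1$ this specializes to Proposition \ref{Prop:HS1-polym}.

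Granting the formula, the inclusion $\supseteq$ of the remaining identity is automatic, since $I_{{\bf a},d}$ has linear powers and so $I_{{\bf a},d}\cdot\HS_i(I_{k{\bf a},kd})\subseteq\HS_i(I_{(k+1){\bf a},(k+1)d})$ by Theorem \ref{Thm:resume-HS(R(I))}. For $\subseteq$, one must show that every monomial $w$ with ${\bf b}(w)\le(k+1){\bf a}$, $\deg(w)=(k+1)d+i$ and $|\supp(w)|\ge i+1$ factors as $w=v_1\,w'$ with $v_1\in\mathcal{G}(I_{{\bf a},d})$ and with $w'=w/v_1$ satisfying ${\bf b}(w')\le k{\bf a}$ and $|\supp(w')|\ge i+1$ (its degree, $kd+i$, being then automatic). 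Writing $b_p=\deg_{x_p}(w)$, this is the problem of choosing a lattice point ${\bf c}={\bf b}(v_1)$ in the transportation polytope $\{\,{\bf c}:L\le{\bf c}\le U,\ |{\bf c}|=d\,\}$, where $L_p=\max(0,\,b_p-k{\bf a}(p))$ and $U_p=\min({\bf a}(p),b_p)$, subject to the extra requirement $|\{p:c_p<b_p\}|\ge i+1$. The hypothesis ${\bf b}(w)\le(k+1){\bf a}$ forces $L_p\le U_p$ for every $p$; the pointwise estimates $\min({\bf a}(p),b_p)\ge b_p/(k+1)$ and $\min(b_p,k{\bf a}(p))\ge\frac{k}{k+1}b_p$ give $|U|\ge d$ and $|L|\le d+\frac{i}{k+1}$; and one checks that $\{p:b_p>L_p\}=\supp(w)$, so that a point ${\bf c}$ sufficiently close to $L$ automatically meets the support requirement.

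The hard part will be this extraction, and it is there that the admissible range of $k$ gets decided. Once $|L|\le d$ the polytope is nonempty and a near-minimal ${\bf c}$ does the job, and by the estimate above this happens automatically whenever $k\ge i$ --- which already settles Conjecture \ref{Conj:GenDeg} for this family. For smaller $k$ one has to be sharper: fix a factorization $w={\bf x}_F\,v$ with $v=v_1^{\circ}\cdots v_{k+1}^{\circ}$, each $v_j^{\circ}\in\mathcal{G}(I_{{\bf a},d})$ and $F\subseteq\set(v)$, and redistribute the variables among the $v_j^{\circ}$ using the symmetric exchange property so that some one factor can be peeled off without pushing any $x_\ell$-degree ($\ell\in F$) or $\max(v)$ out of range. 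Arranging this redistribution to respect the caps ${\bf b}(w')\le k{\bf a}$ and the support condition $|\supp(w')|\ge i+1$ at the same time is the technical heart of the proof; for the support condition I would, among all admissible ${\bf c}$, pick one minimizing $|\{p:c_p=b_p\}|$ and run a single swap $c_{p_0}\mapsto c_{p_0}-1$, $c_{q}\mapsto c_{q}+1$ to force that minimum down to at most $|\supp(w)|-(i+1)$.
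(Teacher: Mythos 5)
You take a genuinely different route from the paper: instead of the M\"obius function of the dual polymatroid (the paper's Lemma \ref{Mobius} plus \cite[Theorem A(iii)]{CMS}), you reduce to Veronese type and derive the closed description
$$
\mathcal{G}\bigl(\HS_i(I_{{\bf b},e})\bigr)=\{\,w:\ \deg(w)=e+i,\ {\bf b}(w)\le{\bf b},\ |\supp(w)|\ge i+1\,\}
$$
from the linear-quotients computation of $\set(v)$. That computation and the resulting description are correct (one direction because $F\cup\{\max(v)\}\subset\supp({\bf x}_Fv)$ with $\max(v)\notin F$, the other by peeling off the $i$ smallest elements of $\supp(w)$), and they make the membership test completely transparent; this is more elementary than the paper's argument and renders Lemma \ref{Mobius} unnecessary. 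Your treatment of the range $k\ge i$ via the transportation polytope is essentially complete (the support bookkeeping at the end still has to be written out carefully), and that range is exactly what Conjecture \ref{Conj:GenDeg} requires.

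However, the case $1\le k<i$, which you defer as ``the technical heart,'' is not a closable gap: the statement is false there, and your own description detects this. Take $n=3$, ${\bf a}=(1,1,1)$, $d=2$, so $I=I_{{\bf a},d}=(x_1x_2,x_1x_3,x_2x_3)$, and take $i=2$, $k=1$. Since $\pd(I)=1$ we have $\HS_2(I)=(0)$, hence $I\cdot\HS_2(I)=(0)$. On the other hand $I^2=I_{(2,2,2),4}$ and, in the lex linear-quotients order, $x_1^2x_3^2:x_1x_2x_3^2=x_1$ and $x_1x_2^2x_3:x_1x_2x_3^2=x_2$, so $\{1,2\}\subset\set(x_1x_2x_3^2)$ and $x_1^2x_2^2x_3^2\in\HS_2(I^2)$ by (\ref{eq:HerTak}); equivalently, $x_1^2x_2^2x_3^2$ has degree $6=2d+2$, exponent vector $\le(2,2,2)$ and support of size $3\ge i+1$, so it lies in $\HS_2(I^2)$ by your formula. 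Thus $\HS_2(I^2)\ne I\cdot\HS_2(I)$. This example saturates your estimate $|L|\le d+\tfrac{i}{k+1}$ (here $L=(1,1,1)$, $|L|=3>2=d$, so the transportation polytope is empty), which confirms that no redistribution via the symmetric exchange property can rescue the extraction for small $k$. In other words, your lower bound on the admissible range of $k$ is not an artifact of the method: the proposition as printed (for all $k\ge1$) is incorrect, and the paper's own proof of the small-$k$ range goes wrong where it identifies the interval $\{{\bf v}\in P_k^{\vee}:{\bf v}\ge k({\bf c}+{\bf 1})-\boldsymbol{\beta}\}$ with a box-truncated simplex, ignoring the intermediate rank inequalities of the dual polymatroid --- precisely the constraints your support condition records. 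You should therefore state and prove the result for $k\ge i$ only.
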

	
	The proof of this result requires some preparations.\smallskip
	
	We follow the notation from \cite{HHBook} and \cite{CMS}. Let $P$ be an integral polymatroid on $[n]$ with  rank function $\rho$. We use $B(P)$ to denote  the set of bases of $P$, that is,
	$$
	B(P)\ =\ \{{\bf u}\in P\cap \mathbb{N}^n\ :\ {\bf u}([n])=\rho([n])\}.
	$$
	Here, for $A\subset [n]$ and a vector ${\bf u}\in\mathbb{R}^n$, ${\bf u}(A)$ denotes the sum $\sum_{i\in A}{\bf u}(i)$.\smallskip
	
	A {\it cage} for $P$ is an integral vector ${\bf c}\in \mathbb{N}^n$ such that ${\bf u}(i)\le{\bf c}(i)$ for any ${\bf u}\in P$ and any $i\in [n]$. Let $I$ be the polymatroidal ideal of $P$. That is, the ideal $I$ generated by the monomials ${\bf x^u}$ with ${\bf u}\in B(P)$. Then ${\bf deg}(I)$ is a cage for $P$, and for any other cage ${\bf c}$ of $P$ we have ${\bf c}\ge{\bf deg}(I)$.
	
	Fix a cage ${\bf c}$ of $P$. The set $\{{\bf c-u}:\ {\bf u}\in B(P)\}$ is then the set of bases of a polymatroid, which we call the \textit{dual polymatroid} of $P$ with respect to ${\bf c}$ and denote by $P_{\bf c}^{\vee}$, or simply by $P^{\vee}$. See \cite[Theorem 1.3]{FM1} for a related result. The rank function of $P_{\bf c}^{\vee}$ is given by $$\rho^{\vee}(A)\ =\ \rho([n]\setminus A)-\rho([n])+{\bf c}(A).$$
	
	The M\"obius function of a polymatroid was introduced in \cite{CMS}. However, it has a close connection to the classical notion of a M\"obius function defined on a partially ordered set (poset), which we recall now. Let $Q$ be a finite poset. The incidence algebra $I(Q) $ of a poset $Q$ consists of all functions $f: Q\times Q \to \mathbb{Z} $ such that $ f(x, y) = 0 $ whenever $ x \not\leq y $.
	
	Convolution is the multiplication operation in $I(Q)$. For two functions $f, g \in I(Q) $, their convolution $ f * g $ is defined by:
	\[
	(f * g)(x, y)\ =\ \sum_{\substack{z \in Q \\ x \leq z \leq y}} f(x, z) \cdot g(z, y)
	\]
	where the sum ranges over all elements $ z $ that lie “between” $ x $ and $ y $ in the poset (i.e., $ z $ satisfies $ x \leq z \leq y $).
	
	Convolution is associative, meaning $ (f * g)*  h = f*  (g*  h) $, and the incidence algebra $ I(Q) $ becomes a ring under this operation (with addition defined pointwise).

	The zeta function, denoted by $ \zeta $, is a fundamental function in $I(Q)$. Formally, $\zeta: Q \times Q \to \mathbb{Z} $  is defined by:
	\[
	\zeta(x, y)\ =\
	\begin{cases}
		\,1 & \text{if } x \leq y, \\
		\,0 & \text{otherwise}.
	\end{cases}
	\]
	Intuitively, $ \zeta(x, y) $ “detects” whether $ x $ is less than or equal to $ y $ in the poset.

	The M\"obius function, denoted $\mu $, is the inverse of the zeta function with respect to convolution in the incidence algebra. Formally, $ \mu \in I(Q) $ satisfies:
	\[
	\zeta\mu\ =\ \mu\zeta\ =\ \delta,
	\]
	where $ \delta $ is the identity element of the incidence algebra, defined by:
	\[
	\delta(x, y)\ =\
	\begin{cases}
		\,1 & \text{if } x = y, \\
		\,0 & \text{otherwise}.
	\end{cases}
	\]
	
	The M\"obius function can be computed recursively using the definition of convolution. For $ x, y \in Q$:
	\begin{enumerate}
		\item[-] If $ x = y $, $ \mu(x, y) = 1 $.
		\item[-] If $ x < y $ (i.e., $ x \leq y $ but $ x \neq y $), then $$\mu(x, y)\ =\ -\sum_{\substack{z \in Q\\ x \leq z < y}} \mu(x, z)\ =\ -\sum_{\substack{z \in Q\\ x< z \leq y}} \mu(z, y).$$
		\item[-] If $ x \not\leq y $, $ \mu(x, y) = 0 $.
	\end{enumerate}\smallskip
	
	We now turn to the definition of the M\"obius function on a polymatroid $P$.
	\begin{Definition} \em Consider $P\cap\mathbb{N}^{n}$ as a finite poset, where ${\bf u}\le{\bf v}$ if and only if ${\bf u}(i)\le{\bf v}(i)$ for all $i\in[n]$. Adjoin the element $\widehat{1}$ to the poset $P\cap\mathbb{N}^{n}$ such that $\widehat{1}$ becomes the maximal element of the resulting poset. Let $\mu_{1}$ denote the classical  M\"obius function on this resulting set. The M\"obius function on the polymatroid $P$ is then defined as the function $\mu_P$ from $P\cap\mathbb{N}^{n}$ to $\mathbb{N}$ given by
		$$
		\mu_P({\bf u})=-\mu_1({\bf u}, \widehat{1} ), \quad \textup{for all}\ {\bf u}\in P\cap \mathbb{N}^{n}.
		$$
	\end{Definition}
	It is not difficult to see that this definition is equivalent to \cite[Definition 2.1]{CMS}.\smallskip
	
	Let ${\bf e}_1,\dots,{\bf e}_n$ be the standard basis of $\mathbb{R}^n$. That is, ${\bf e}_i(j)=0$ if $j\ne i$ and ${\bf e}_i(i)=1$.\smallskip
	
	Let $I$ be the polymatroidal ideal of the polymatroid $P$ with cage ${\bf c}$. Let ${\bf u}\in B(P)$ and $1\leq j_1<j_2<\cdots<j_i\leq n$. According to \cite[Theorem A(iii)]{CMS}, the monomial $x_{j_1}x_{j_1}\cdots x_{j_i}{\bf x^u}$ belongs to $\HS_i(I)$ if and only if $\mu_{P^{\vee}}({\bf c-u}-{\bf e}_{j_1}-\ldots-{\bf e}_{j_i})\neq 0$.
	
	Let $I$ be the polymatroidal ideal of the integral polymatroid $P$. Then $I^{k+1}$ is the polymatroidal ideal of $P_{k+1}=P\vee P\vee\cdots\vee P$, where $P$ repeats $(k+1)$ times, see \cite[Theorem 12.1.5]{HHBook}. Note that if ${\bf c}$ is a cage of $P$, then $(k+1){\bf c}$ is a cage of $P_{k+1}$. It follows that $P_{k+1}^{\vee}=(P^{\vee})_{k+1}$. Let $\mu_{k+1}^{\vee}$ denote the M\"obius function on $P_{k+1}^{\vee}$.
	
	For integers $d>0$ and $a_1,\ldots,a_n$ with $a_1+\cdots+a_n\ge d$, let
	$$
	B\ =\ \{{\bf u}\in \mathbb{N}^n\ :\ {\bf u}(i)\le a_i,\ \textup{for all}\ i=1,\dots,n,\ {\bf u}([n])=d\}.
	$$
	
	Then $B$ represents the set of bases of a discrete polymatroid $P$ on the ground set $[n]$, of rank $d$, which is called a discrete polymatroid of Veronese type. Its polymatroidal ideal $I\subset S$ is the ideal of Veronese type $I_{{\bf a},d}$ with ${\bf a}=(a_1,\dots,a_n)$.
	
	\begin{Lemma} \label{Mobius} Fix positive integers  $i, n$ and let ${\bf c}=(a_1,\ldots,a_n)\in \mathbb{N}_{>0}^n$. Let
		$$
		P=\{(y_1,\ldots,y_n)\in \mathbb{R}^n:\ y_1+\cdots+y_n\leq i,\ 0\leq y_s\leq a_s,\ \text{for all}\ s=1,\ldots,n\}.
		$$
		Let $\mu $ be the M\"obius function on $P$. Then $\mu(\mathbf{0})=0$ if and only if $n\leq i$.
	\end{Lemma}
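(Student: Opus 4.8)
The plan is to evaluate $\mu({\bf 0})$ explicitly and reduce the statement to a standard alternating binomial sum. First I would unwind the definition of the M\"obius function of a polymatroid recalled above: $\mu({\bf 0})=\mu_P({\bf 0})=-\mu_1({\bf 0},\widehat 1)$, where $\mu_1$ is the classical M\"obius function of the poset $Q=(P\cap\mathbb N^n)\cup\{\widehat 1\}$ obtained by adjoining a maximum. Since $\widehat 1$ is that adjoined maximum, the recursion for $\mu_1$ gives $\mu_1({\bf 0},\widehat 1)=-\sum_{{\bf z}\in P\cap\mathbb N^n}\mu_1({\bf 0},{\bf z})$, and hence $\mu({\bf 0})=\sum_{{\bf z}\in P\cap\mathbb N^n}\mu_1({\bf 0},{\bf z})$.

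The key observation is that $P\cap\mathbb N^n$ is an order ideal (down-set) of the product of chains $C=\prod_{s=1}^n\{0,1,\dots,a_s\}$: if ${\bf z}\in P\cap\mathbb N^n$ and ${\bf 0}\le {\bf z}'\le {\bf z}$, then $|{\bf z}'|\le|{\bf z}|\le i$, so ${\bf z}'\in P$. Consequently, for every ${\bf z}\in P\cap\mathbb N^n$ the interval $[{\bf 0},{\bf z}]$ taken in $P\cap\mathbb N^n$ coincides with the interval taken in $C$, so $\mu_1({\bf 0},{\bf z})$ equals the M\"obius function of $C$ evaluated at $({\bf 0},{\bf z})$. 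Because $C$ is a direct product of chains, this value factors as $\prod_{s=1}^n\mu_{[0,a_s]}(0,z_s)$, and each chain factor is $1$ if $z_s=0$, $-1$ if $z_s=1$, and $0$ otherwise. Therefore $\mu_1({\bf 0},{\bf z})=(-1)^{|{\bf z}|}$ when ${\bf z}\in\{0,1\}^n$ and $\mu_1({\bf 0},{\bf z})=0$ otherwise; moreover, since every $a_s\ge 1$, a $0$-$1$ vector ${\bf z}$ lies in $P$ exactly when $|{\bf z}|\le i$.

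Substituting, $\mu({\bf 0})=\sum_{{\bf z}\in\{0,1\}^n,\ |{\bf z}|\le i}(-1)^{|{\bf z}|}=\sum_{k=0}^{\min(n,i)}(-1)^k\binom nk$. If $n\le i$ this is the complete alternating sum $(1-1)^n=0$. If $n>i$, the truncated alternating sum equals $(-1)^i\binom{n-1}{i}$ by the standard identity $\sum_{k=0}^{m}(-1)^k\binom nk=(-1)^m\binom{n-1}{m}$, and since $n-1\ge i$ we have $\binom{n-1}{i}\ne 0$, so $\mu({\bf 0})\ne0$. This yields $\mu({\bf 0})=0$ if and only if $n\le i$.

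I do not expect a genuine obstacle here. The only point requiring care is the order-ideal argument, which is what lets us replace the M\"obius function of the truncated polytope-poset by that of the full product of chains; once this is in place, everything follows from the multiplicativity of the M\"obius function over a product of posets and a one-line binomial identity. (One may also phrase the final sum as $1$ minus the reduced Euler characteristic of the $(i-1)$-skeleton of the $(n-1)$-simplex, but the direct computation above seems cleanest.)
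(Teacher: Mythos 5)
Your proof is correct, and it takes a genuinely different route from the paper. The paper does not compute $\mu(\mathbf{0})$ at all: instead it translates the vanishing question through \cite[Theorem A(iii)]{CMS} into whether the monomial ${\bf x}^{{\bf c}-\boldsymbol{\alpha}}{\bf x}^{\boldsymbol{\alpha}}$ lies in $\HS_i(I)$ for the polymatroidal ideal $I$ of the dual polymatroid; the case $i\ge n$ then follows from $\pd I\le n-1$ (so $\HS_i(I)=0$), and the case $i<n$ from an explicit construction of $i$ lexicographically larger generators $(x_su)/x_{i+1}$ exhibiting $[i]\subset\set(u)$, whence $x_1\cdots x_iu\in\HS_i(I)$ by equation (\ref{eq:HerTak}). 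You instead evaluate $\mu(\mathbf{0})=-\mu_1(\mathbf{0},\widehat 1)=\sum_{{\bf z}\in P\cap\mathbb{N}^n}\mu_1(\mathbf{0},{\bf z})$ directly, observing that $P\cap\mathbb{N}^n$ is an order ideal of the product of chains $\prod_s[0,a_s]$ (so intervals, and hence M\"obius values, agree with those of the product, which factor), reducing everything to the truncated alternating sum $\sum_{k=0}^{\min(n,i)}(-1)^k\binom{n}{k}$, which is $0$ for $n\le i$ and $(-1)^i\binom{n-1}{i}\ne0$ for $n>i$. All steps check out: the order-ideal observation is the right justification for replacing the M\"obius function of the truncated poset by that of the product of chains, the hypothesis $a_s\ge1$ is used exactly where needed (so that all $0$--$1$ vectors of weight $\le i$ lie in $P$), and the binomial identity is standard. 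Your argument is more elementary and self-contained --- it avoids both the deep result of \cite{CMS} and the linear-quotients machinery --- and it yields the exact value of $\mu(\mathbf{0})$ rather than only its (non)vanishing; the paper's argument, by contrast, stays entirely within the homological dictionary it uses throughout Section \ref{sec5}, which is why it is phrased the way it is. One cosmetic remark: your computation shows $\mu(\mathbf{0})$ can be negative (e.g.\ $-\binom{n-1}{1}$ for $i=1<n$) under the definition of $\mu_P$ recalled in the paper, which is harmless for the lemma (only vanishing matters) but worth noting against the paper's assertion that $\mu_P$ takes values in $\mathbb{N}$.
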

	
	\begin{proof}
		It is clear that ${\bf c}$ is a cage for $P$. Let $I$ be the monomial ideal generated by ${\bf x^{c-u}}$ with ${\bf u}\in B(P)$. In other words, $I$ is the polymatroidal ideal of the dual polymatroid of $P$ with respect to ${\bf c}$. If $i\le n$, let $\boldsymbol{\alpha}$ be the vector $(1,\ldots, 1,0,\ldots,0)$, where the first $i$ entries are 1's , and the remaining entries are 0's. It is clear that $\boldsymbol{\alpha}\in B(P)$. Otherwise, if $i>n$, let $\boldsymbol{\alpha}$ be any vector $(\alpha_1,\ldots,\alpha_n)\in B(P)$ with $\alpha_j>0$ for all $j=1,\ldots,n$. Note that, if $i\leq n$ we may write $\mathbf{0}=\boldsymbol{\alpha}-{\bf e}_1-\cdots-{\bf e}_i$.
		According to \cite[Theorem A(iii)]{CMS}, we have $\mu(\mathbf{0})\neq 0$ if and only if ${\bf x}^{{\bf c}-\boldsymbol{\alpha}}{\bf x}^{\boldsymbol{\alpha}}$ (which is equal to $x_1\cdots x_i{\bf x}^{{\bf c}-\boldsymbol{\alpha}}$ if $i\leq n$) belongs to $\HS_i(I)$.\smallskip
		
		(a) If $i\ge n$, since the projective dimension of $I$ is at most $n-1$, we have $\HS_i(I)=0$, which implies $\mu(\mathbf{0})= 0$.\smallskip
		
		(b) If $i<n$, we set $\boldsymbol{\alpha}_s={\bf e}_1+\cdots+\widehat{{{\bf e}_s}}+\cdots+{\bf e}_{i+1}$, for $s=1,\ldots,i$. Here, $\widehat{{{\bf e}_s}}$ means that ${\bf e}_s$ is omitted from the sum. Then all $\boldsymbol{\alpha}_s$ belong to $B(P)$. Set $u={\bf x}^{{\bf c}-\boldsymbol{\alpha}}=x_1^{a_1-1}x_2^{a_2-1}\cdots x_i^{a_i-1}x_{i+1}^{a_{i+1}}\cdots x_n^{a_n}$. It follows that ${\bf x}^{{\bf c}-\boldsymbol{\alpha}_s}=(x_su)/x_{i+1}$ are minimal generators of $I$, for all $s=1,\dots,i$. Moreover
		$$
		(x_1u)/x_{i+1}>(x_2u)/x_{i+1}>\cdots>(x_iu)/x_{i+1}>u
		$$
		with respect to the lexicographic order induced by $x_1>\dots>x_{n}$. Since $I$ has linear quotients with respect to any lexicographic order, it follows that $[i]\subset\set(u)$. Equation (\ref{eq:HerTak}) implies that $x_1x_2\cdots x_i u\in\HS_i(I)$ and so $\mu(\mathbf{0})\neq 0$, as desired.
	\end{proof}
	
	We are finally ready to prove Proposition \ref{Prop:VerType}.
	
	\begin{proof}[Proof of Proposition \ref{Prop:VerType}]
		Notice that if $J\subset S$ is a monomial ideal and $u\in S$ is a monomial, then $\HS_i(uJ)=(u)\HS_i(J)$. Therefore, in view of Proposition \ref{Prop:HHV}, it is enough to prove the statement for polymatroidal ideals of Veronese type.
		
		Assume that $I$ is the polymatroidal ideal of $P$, where
		$$
		P=\{(y_1,\ldots,y_n)\in \mathbb{R}^n:\ y_1+\cdots+y_n\leq d,\ 0\leq y_i\leq a_j,\ \textup{for all}\ j=1,\ldots,n\}.
		$$
		Here, ${\bf c}=(a_1,\ldots,a_n)$ is a given vector in $\mathbb{N}^n$.  When discussing the dual polymatroid of $P_k$ or $P_{k+1}$, we take  $k({\bf c+1})=k(a_1+1,\ldots,a_n+1)$ or $(k+1)({\bf c+1})$  as the cages respectively, so as to avoid unnecessary confusion.
		
		Fix $1\leq i\leq n-1$ and $k\geq 1$. Let ${\bf x}^{\boldsymbol{\alpha}}$ be a minimal generator of  $\HS_i(I^{k+1})$. Then we may write $\boldsymbol{\alpha}={\bf u}+{\bf e}_{j_1}+\cdots+{\bf e}_{j_i}$ for some vector ${\bf u}\in B(P_{k+1})$ and some $1\leq j_1<j_2<\cdots<j_i\leq n$. Let $P_{k+1}^{\vee}$ be the dual polymatroid of $P_{k+1}$ with respect to the cage $(k+1)({\bf c+1})$. Then $P_{k+1}^{\vee}$ is the following polymatroid:
		$$
		\Big\{(y_1,\ldots,y_n)\in\mathbb{R}^n\ \Big\vert\ \ \substack{\displaystyle y_1+\cdots+y_n\leq (k+1)\big({\bf c}([n])+n-d\big)\\[3pt] \displaystyle 0\leq y_j\leq (k+1)a_j,\ 1\leq j\leq n}\Big\}.
		$$
		Let $\mu_{k+1}^{\vee}$ be the M\"obius function on $P_{k+1}^{\vee}$. According to \cite[Theorem A(iii)]{CMS}, we have $\mu_{k+1}^{\vee}((k+1)({\bf c+1}) -\boldsymbol{\alpha})\neq 0$. Note that, the poset $$\{{\bf v}\in P_{k+1}^{\vee}\cap\mathbb{N}^n\ :\ {\bf v}\geq (k+1)({\bf c+1})-\boldsymbol{\alpha}\}$$ is isomorphic to the poset $$\{(y_1,\ldots,y_n)\in \mathbb{N}^n\ :\  y_1+\cdots+y_n\leq i,\ 0\leq y_j\leq {\bf u}(j)+k_j\}.$$  Here, $k_j=1 $ if $j\in \{j_1,\ldots,j_i\}$ and 0 otherwise. Note that this poset is the intersection of $\mathbb{N}^n$ with a polymatroid, which we denote by $Q$. Since   $\mu_{k+1}^{\vee}((k+1)({\bf c+1}) -\boldsymbol{\alpha})\neq 0$, we have $\mu_Q({\bf 0})\neq 0$. By Lemma~\ref{Mobius}, it follows that there exists $j\in [n]\setminus\{j_1,\ldots,j_i\}$ such that ${\bf u}(j)\neq 0$.
		
		Write ${\bf u}={\bf u}_1+\cdots+{\bf u}_{k+1}$, with each ${\bf u}_i\in B(P)$. Without loss of generality, we assume that ${\bf u}_1(j)\neq 0$. Let $\boldsymbol{\beta}={\bf u}_1+\ldots+{\bf u}_k+{\bf e}_{j_1}+\cdots+{\bf e}_{j_i}$. Let $P_k^{\vee}$ be the dual polymatroid of $P_k$ with respect to $k(\mathbf{c+1})$ and let $\mu_k^{\vee}$ be its M\"obius function. Then $\{{\bf v}\in P_k^{\vee}\ :\ {\bf v}\geq k({\bf c+1})-\boldsymbol{\beta}\}$ is isomorphic to the poset $$\{(y_1,\ldots,y_n)\in \mathbb{N}^n\ :\  y_1+\cdots+y_n\leq i,\ 0\leq y_j\leq {\bf u}_1(j)+\cdots+{\bf u}_k(j)+k_j\}.$$ Here, $k_j=1$ if $j\in \{j_1,\ldots,j_i\}$ and $0$ otherwise.
		
		Since ${\bf u}_1(j)\neq 0$, it follows that $\mu_k^{\vee}(k({\bf c+1})-\boldsymbol{\beta})\neq 0$ by Lemma~\ref{Mobius}. This implies ${\bf x}^{\boldsymbol{\beta}}\in\HS_i(I^{k+1})$ and so ${\bf x}^{\boldsymbol{\alpha}}={\bf x}^{{\bf u}_{k+1}}{\bf x}^{\boldsymbol{\beta}}$ belongs to $I\cdot\HS_i(I^{k})$, as desired.
	\end{proof}
	
	One may conjecture that if $I=I_{{\bf a},d}$, then $\HS_i(I)=I_{{\bf a},d+i}$ for $i=1, \ldots, n-1$. This equality holds for $i=1$. Furthermore, $\HS_i(I)\subset I_{{\bf a},d+i}$ for all $i \geq 1$. However, the converse inclusion does not hold in general, as shown next.
	
	\begin{Example}\em 
		Let ${\bf a}=(10,10,10,10)$, $d=14$ and $I=I_{{\bf a},d}$. Then $x_1^{10}x_2^7$ belongs to $I_{{\bf a},17}$. However, every monomial in the minimal generating set of $\HS_3(I) $ is of the form $x_{j_1}x_{j_2}x_{j_3}u$, where $u\in\mathcal{G}(I)$ and $1 \leq j_1 < j_2 < j_3 \leq 4$. Hence $x_1^{10}x_2^7\notin\HS_3(I)$.
	\end{Example}
	This example also shows that the homological shift ideals of a Veronese type ideal are not of Veronese type in general.
	
	\subsection{Matroidal edge ideals} For a finite simple graph $G$ on the vertex set $[n]$ and with edge set $E(G)$, the \textit{edge ideal} of $G$ is defined as the squarefree monomial ideal $$I(G)=(x_ix_j:\ \{i,j\}\in E(G)).$$
	
	\begin{Definition}\label{Def:multi}
		\rm Given positive integers $a \leq b$, we set $[a,b]=\{c\in\mathbb{Z}: a\leq c\leq b\}$. Let $G$ be a graph with vertex set $V(G) = [n]$ and a vertex partition $V_1, \ldots, V_r$, where
		$$
		V_i=[t_{i-1}+1, t_i],\quad \text{ for all }\,\,\, i=1 , \ldots, r,
		$$
		with $t_0 = 0$, $t_{0}<t_1<\cdots<t_{r-1}$ and $t_r = n$. If $E(G) = \{ \{i, j\} :\ i \in V_k,\ j \in V_\ell \text{ and } k \neq \ell\}$, then $G$ is called a \textit{complete multipartite graph}.
		
	\end{Definition}
	The following result is proved in \cite[Theorem 3.1]{FM1}, see also \cite{KNQ}. 
	
	\begin{Proposition}\label{Prop:I(G)matr}
		The ideal $I(G)$ is matroidal if and only if $G$ is a complete multipartite graph.
	\end{Proposition}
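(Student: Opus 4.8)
This Proposition is the graph-theoretic shadow of the classification of matroids of rank $2$, so the plan is to translate the ideal-theoretic hypothesis into matroid language and read off the answer. By the definition recalled in Section~\ref{sec3}, $I(G)$ is matroidal if and only if the exponent vectors of $\mathcal{G}(I(G))$ --- here precisely the vectors ${\bf e}_i+{\bf e}_j$ with $\{i,j\}\in E(G)$ --- form the set of bases of a discrete matroid $M$ on $[n]$. Since these vectors all have modulus $2$, such an $M$ has rank $2$, and its loops are exactly the vertices of $G$ meeting no edge; accordingly I assume, as is customary for edge ideals, that $G$ has no isolated vertices, so that $M$ is loopless. (Without this convention, the only matroidal edge ideals whose graph is not complete multipartite come from graphs having both an edge and an isolated vertex.)

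For the implication ``$G$ complete multipartite $\Rightarrow$ $I(G)$ matroidal'' I would simply produce the matroid. Given the vertex partition $V(G)=V_1\cup\cdots\cup V_r$ of Definition~\ref{Def:multi}, let $M$ be the rank-$2$ matroid on $[n]$ whose independent sets are the subsets $A$ with $|A|\le 2$ that meet each $V_s$ in at most one element; equivalently, $M$ arises from the uniform matroid $U_{2,r}$ by replacing its $s$-th point with $|V_s|$ parallel copies. A quick check of the independence axioms confirms that $M$ is a matroid --- the only nontrivial point is the exchange axiom for a singleton and a pair, which holds because two elements lying in distinct classes cannot both lie in the class of a given third element. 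The bases of $M$ are exactly the $2$-subsets with endpoints in distinct parts, i.e.\ the edges of $G$, so $I(G)$ is matroidal.

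For the converse, suppose $I(G)$ is matroidal and let $M$ be the associated loopless rank-$2$ matroid, with rank function $\rho$. Define $i\approx j$ to mean $i=j$ or $\rho(\{i,j\})=1$; this is the usual parallelism relation. It is reflexive and symmetric by construction, and transitive by submodularity of $\rho$: if $\rho(\{i,j\})=\rho(\{j,k\})=1$ with $i,j,k$ distinct, then $\rho(\{i,j,k\})+\rho(\{j\})\le\rho(\{i,j\})+\rho(\{j,k\})=2$; since $\rho(\{j\})=1$ this gives $\rho(\{i,k\})\le\rho(\{i,j,k\})\le 1$, and as $M$ is loopless $\rho(\{i,k\})=1$, i.e.\ $i\approx k$. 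Let $V_1,\dots,V_r$ be the classes of $\approx$. For $i\neq j$ we then have $\{i,j\}\in E(G)$ iff ${\bf e}_i+{\bf e}_j$ is a basis of $M$ iff $\rho(\{i,j\})=2$ iff $i$ and $j$ lie in different classes. Hence $E(G)$ is exactly the set of pairs with endpoints in distinct $V_s$, and after relabelling the vertices so that each $V_s$ becomes an interval we conclude that $G$ is complete multipartite.

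Neither direction is hard, so there is no genuine obstacle here; the one delicate point is the behaviour at loops, which is precisely why looplessness of $M$ (equivalently, no isolated vertices in $G$) is required --- otherwise $\approx$ fails to be transitive and the argument breaks. If one wishes to stay inside the ideal and not invoke matroid theory, transitivity of $\approx$ can instead be extracted by feeding suitable pairs $u=x_ix_j$, $v=x_kx_\ell$ into the exchange property of Section~\ref{sec3} and tracking which new edge gets produced; this is workable but needs a mildly tedious case analysis that the one-line submodularity computation above makes unnecessary.
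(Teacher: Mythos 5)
Your argument is correct and complete. Note that the paper itself does not prove this proposition: it simply cites \cite[Theorem 3.1]{FM1} and \cite{KNQ}, so there is no in-text proof to compare against. Your route --- identifying the edge ideal's exponent vectors with the bases of a rank-$2$ matroid, building the matroid for the forward direction as $U_{2,r}$ with parallel classes $V_1,\dots,V_r$, and for the converse extracting the partition as the parallelism classes of $\approx$, with transitivity via submodularity of the rank function --- is the standard classification of loopless rank-$2$ matroids, and every step checks out; the submodularity computation $\rho(\{i,j,k\})+\rho(\{j\})\le\rho(\{i,j\})+\rho(\{j,k\})$ is applied correctly. You are also right to flag the isolated-vertex issue: a graph with at least one edge and an isolated vertex has a matroidal edge ideal (the isolated vertex is a loop of the matroid) yet is not complete multipartite, so the statement as literally phrased needs the no-isolated-vertices convention, and your remark that $\approx$ fails to be transitive in the presence of loops correctly locates where the argument would otherwise break. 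The only cosmetic point is the relabelling needed to match Definition~\ref{Def:multi}, which requires the parts to be intervals; you address this explicitly, and it is harmless since complete multipartiteness is a property up to isomorphism.
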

	
	Next, we establish Conjecture \ref{Conj:GenDeg} for matroidal edge ideals.
	
	\begin{Proposition}\label{Prop:I(G)}
		Let $I=I(G)\subset S$ be a matroidal edge ideal. Then $\HS_i(I^{k+1})=I\cdot\HS_i(I^k)$ for all $k\ge i$.
	\end{Proposition}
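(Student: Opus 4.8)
The plan is to reduce everything to an explicit description of $I^k$ and then to peel a single, carefully chosen edge off a minimal generator of $\HS_i(I^{k+1})$. By Proposition~\ref{Prop:I(G)matr} we may assume $G=K_{n_1,\dots,n_r}$ is complete multipartite with parts $V_1,\dots,V_r$ and $r\ge2$. A multiset of $2k$ vertices of $G$ can be partitioned into $k$ edges of $G$ exactly when it meets each $V_p$ in at most $k$ vertices counted with multiplicity: the ``only if'' is immediate, and the ``if'' is a short greedy argument pairing up vertices drawn from the two currently largest parts. Writing $\mathbf{a}(V_p)=\sum_{j\in V_p}a_j$, this gives
$$
\mathcal{G}(I^k)=\{\mathbf{x^a}\ :\ |\mathbf{a}|=2k,\ \mathbf{a}(V_p)\le k\ \text{for all}\ p\},
$$
so $I^k$ is the polymatroidal ideal of the ``partition--Veronese'' polymatroid $P_k=\{\mathbf y\ge\mathbf 0:|\mathbf y|\le2k,\ \mathbf y(V_p)\le k\ \forall p\}$. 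The inclusion $I\cdot\HS_i(I^k)\subseteq\HS_i(I^{k+1})$ is automatic by Theorem~\ref{Thm:resume-HS(R(I))}, so only the reverse inclusion must be shown.

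The case $r=2$ I would handle separately and cleanly: here $I=\m_1\m_2$ with $\m_t=(x_j:j\in V_t)$ in disjoint sets of variables, so $I^k\cong\m_1^k\otimes_K\m_2^k$, the minimal graded free resolution of $I^k$ is the tensor product of those of $\m_1^k$ and $\m_2^k$, and hence $\HS_i(I^k)=\sum_{p+q=i}\HS_p(\m_1^k)\HS_q(\m_2^k)$. Since each $\m_t$ satisfies the strong exchange property, Proposition~\ref{Prop:VerType} gives $\m_t\HS_p(\m_t^k)=\HS_p(\m_t^{k+1})$ for every $k\ge1$; multiplying the two identities out yields $\HS_i(I^{k+1})=I\cdot\HS_i(I^k)$ for all $k\ge1$ in this case.

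Assume now $r\ge3$. I would fix a lexicographic order and a minimal generator $w=\mathbf{x}_F\mathbf{x^a}$ of $\HS_i(I^{k+1})$ with $\mathbf{x^a}\in\mathcal{G}(I^{k+1})$, $F\subseteq\set(\mathbf{x^a})$, $|F|=i$, as in equation~(\ref{eq:HerTak}). It then suffices to produce an edge $e=\{a,b\}$ of $G$ with $a,b$ in the support of $\mathbf{x^a}$ such that $\mathbf{b}:=\mathbf{a}-\mathbf{e}_a-\mathbf{e}_b$ satisfies $\mathbf{x^b}\in\mathcal{G}(I^k)$ and $F\subseteq\set(\mathbf{x^b})$, for then $w=x_ax_b\cdot\mathbf{x}_F\mathbf{x^b}\in I\cdot\HS_i(I^k)$. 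By the description of $\mathcal{G}(I^k)$ the first requirement forces $e$ to be incident to every part with $\mathbf{a}(V_p)=k+1$, of which there are at most two since $\sum_p\mathbf{a}(V_p)=2(k+1)$. The second requirement is governed by the $\set(\cdot)$ of a partition--Veronese ideal (valid for any lexicographic order, these ideals being polymatroidal): $\ell\in\set(\mathbf{x^a})$ iff some $j>\ell$ in the same part has $a_j>0$, or else $\mathbf{a}(V_{p(\ell)})\le k$ and some $j>\ell$ in another part has $a_j>0$. Thus $e$ must also avoid the ``critical'' vertices that serve as unique exchange witnesses for members of $F$. The hypothesis $k\ge i$ provides the slack needed to find such an $e$: for instance the set $H=\{p:\mathbf{a}(V_p)+|F\cap V_p|\ge k+1\}$ has $|H|\le2$ because $\sum_p(\mathbf{a}(V_p)+|F\cap V_p|)=2(k+1)+i<3(k+1)$, and a similar count shows that $\mathbf{x^a}$ contains at least $(k+1)-i\ge1$ edges joining any two parts of $H$; removing such an edge, routed through a non-critical vertex in the boundary cases, repairs the oversized part sums while leaving every exchange witness for $F$ intact. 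Alternatively, and more in the spirit of Section~\ref{sec5}, one can run the argument through the dual polymatroid $P_{k+1}^\vee$ and \cite[Theorem A(iii)]{CMS} exactly as in the proof of Proposition~\ref{Prop:VerType}: for $r\ge3$ the local poset above $\mathbf c-\boldsymbol\alpha$ in $P_{k+1}^\vee$ is again a partition--Veronese polymatroid of rank $\le i$, whose M\"obius function at $\mathbf 0$ (by a mild extension of Lemma~\ref{Mobius}) cannot vanish unless $\supp(\mathbf u)\not\subseteq F$; one then peels an edge of $\mathbf u$ through a vertex outside $F$, again invoking $k\ge i$ to keep the exponent of the remaining factor a genuine shifted generator of $I^k$.

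The part I expect to be the main obstacle is precisely this last step. The edge that is removed must simultaneously lower the oversized part sums (so that $\mathbf{x^b}$ stays a minimal generator of $I^k$) and preserve the unique exchange witnesses of $F$ (so that $F\subseteq\set(\mathbf{x^b})$), and it is the interaction of these two demands — rather than the bare existence of a removable edge — that forces the hypothesis $k\ge i$ instead of merely $k\ge1$. A secondary, routine task is verifying the partition--Veronese analogue of Lemma~\ref{Mobius}, equivalently the displayed formula for $\set(\cdot)$, which follows from these ideals being polymatroidal with linear quotients in every lexicographic order.
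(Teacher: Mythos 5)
Your setup is sound and partly overlaps with the paper's: the reduction to a complete multipartite $G$, the explicit description of $\mathcal{G}(I^k)$ as the bases of a ``partition--Veronese'' polymatroid, and the strategy of peeling one edge off a generator of $\HS_i(I^{k+1})$ so that the remaining factor still carries $F$ in its $\set$. Your $r=2$ case via the tensor--product decomposition $\HS_i(I^k)=\sum_{p+q=i}\HS_p(\m_1^k)\HS_q(\m_2^k)$ and Proposition \ref{Prop:VerType} is correct and is a genuinely different (and clean) argument for that subcase. However, for $r\ge3$ the proof has a genuine gap exactly where you flag it: the existence of an edge $e=\{a,b\}$ that simultaneously meets every part with $\mathbf a(V_p)=k+1$ \emph{and} keeps $F\subseteq\set(\mathbf{x^b})$ is only asserted (``removing such an edge, routed through a non-critical vertex in the boundary cases''), not proved; the counting you do ($|H|\le2$, at least $k+1-i$ edges between the two parts of $H$) shows an edge hitting the oversized parts exists, but says nothing about whether one can avoid destroying the exchange witnesses of $F$. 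Moreover, the $\set$-formula you rely on is off by one: for $\ell$ with no larger supported index in its own part, membership in $\set(\mathbf{x^a})$ requires $\mathbf a(V_{p(\ell)})\le k-1$, not $\le k$ (adding $x_\ell$ and deleting a variable from another part raises the part sum of $V_{p(\ell)}$ by one). Compare with Lemma \ref{Lem:Ay}: when some part $V_{d'}$ below $\max(u)$ is saturated, the indices of $V_{d'}$ at or above the top of its support are excluded from $\set(u)$. Since your notion of ``critical vertex'' is built on this formula, the error propagates into the very step that is missing.

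For contrast, the paper closes this gap using the precise two-case description of $\set(u)$ for $u\in\mathcal{G}(I^k)$ (Lemma \ref{Lem:Ay}), and in fact proves the identity for all $k\ge2$ once $i\ge2$, not merely $k\ge i$. In case (i) (some part below $\max(u)$ saturated) one reorders the edge factorization so that the two ``landmarks'' $x_p$ and $x_{\max(u)}$ both survive in $u'=e_1\cdots e_k$, whence $\set(u')=\set(u)\supseteq F$. In case (ii) ($\set(u)=[\max(u)-1]$) one argues by contradiction: if no $e_j$ with $j\le k$ could be removed without shrinking $\set$, then two distinct parts $V_{d_1},V_{d_2}$ would become saturated in $u/e_1$ and $u/e_2$, forcing the endpoints of $e_{k+1}$ into $V_{d_1}$ and $V_{d_2}$ and contradicting $\max(u)\in V_d$. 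If you want to complete your version, you should either prove this kind of statement about preserving all of $\set(u)$ (rather than tracking $F$ vertexwise), or make the ``non-critical routing'' step into an actual lemma with the corrected $\set$-formula.
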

	
	We need the following lemma shown in \cite[Proposition 4.7]{FQ1}.
	
	Let $u=x_1^{a_1}\cdots x_n^{a_n}\in S$ be a monomial and $A\subset[n]$ be non-empty. We put $u_A=\prod_{i\in A}x_i^{a_i}$ and define the support of $u$ as $\supp(u)=\{i:\ a_i>0\}$.
	\begin{Lemma}\label{Lem:Ay}
		Let $G$ be a complete multipartite graph as given in Definition \ref{Def:multi}, and put $I=I(G)$. Then, $I^k$ has linear quotients with respect to the lexicographical order induced by $x_1>\cdots>x_n$. Let $u\in\mathcal{G}(I^k)$ with $\max(u)\in V_d$ for some $d$. Then, the following statements hold.
		\begin{enumerate}
			\item[\textup{(i)}] If there exists some $d'<d$ such that $\deg u_{V_{d'}} = k$, then 
			\[
			\set(u)=[1,p-1] \cup [{t_{d'}+1}, {\max(u)-1}]
			\]
			where $p$ is the maximal integer with $p\in\supp(u_{V_{d'}})$.\smallskip
			\item[\textup{(ii)}] If $\deg u_{V_{d'}}< k$ for all $d'< d$, then $\set(u)=[1, \max(u)-1]$.
		\end{enumerate}
	\end{Lemma}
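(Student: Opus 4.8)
The plan is to follow \cite[Proposition 4.7]{FQ1}: the linear quotients assertion is free, and the real content is a combinatorial description of $\set(u)$ in terms of the parts $V_1,\dots,V_r$, from which (i) and (ii) fall out of a short case analysis. First, by Proposition \ref{Prop:I(G)matr} the ideal $I=I(G)$ is matroidal, hence polymatroidal; since products of polymatroidal ideals are polymatroidal, $I^k$ is polymatroidal, and therefore has linear quotients with respect to the lexicographic order induced by any ordering of the variables, in particular $x_1>\cdots>x_n$, so $\set(u)$ is defined. Next I would record an explicit description of $\mathcal{G}(I^k)$: as $I^k$ is generated in degree $2k$, a monomial $u=x_1^{a_1}\cdots x_n^{a_n}$ of degree $2k$ lies in $\mathcal{G}(I^k)$ if and only if it is a product of $k$ edges of $G$, and this happens if and only if $\deg u_{V_\ell}\le k$ for every part $V_\ell$. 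One implication is clear, since an edge meets a given part in at most one vertex; for the converse, list the $2k$ vertex-slots of $u$ part by part so that each $V_\ell$ occupies a contiguous block of length $\deg u_{V_\ell}\le k$, and pair the $i$-th slot with the $(i+k)$-th for $i=1,\dots,k$. Because every block has length $\le k$, the two slots of each pair lie in distinct parts, so the $k$ pairs are edges of $G$ whose product is $u$. A corollary, used repeatedly below: if $u\in\mathcal{G}(I^k)$, $x_m$ divides $u$ and $\ell\ne m$, then $x_\ell u/x_m\in\mathcal{G}(I^k)$ if and only if either $\ell$ and $m$ lie in the same part, or they lie in different parts and $\deg u_{V_a}<k$, where $V_a$ denotes the part containing $\ell$.

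The next step is to reformulate $\set(u)$. Since $I^k$ is equigenerated with linear quotients for $x_1>\cdots>x_n$, computing the relevant colon ideals shows that $\ell\in\set(u)$ if and only if there is an index $m>\ell$ with $x_m\mid u$ such that $x_\ell u/x_m\in\mathcal{G}(I^k)$: any generator lexicographically larger than $u$ and dividing $x_\ell u$ must equal $x_\ell u/x_m$ for some $x_m\mid u$, and such a monomial exceeds $u$ precisely when $\ell<m$. Combined with the corollary above, this gives the working criterion: $\ell\in\set(u)$ if and only if there is $m>\ell$ with $x_m\mid u$ lying either in the part containing $\ell$, or in another part while $\deg u_{V_a}<k$ (with $V_a$ the part of $\ell$). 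Two consequences will do most of the work: (1) for every part $V_c$ meeting $\supp(u)$ one has $[t_{c-1}+1,\max(u_{V_c})-1]\subseteq\set(u)$, by taking $m=\max(u_{V_c})$; and (2) $\set(u)\subseteq[1,\max(u)-1]$, since no index $\ge\max(u)$ admits a valid $m$.

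Finally, the case analysis. In case (ii), every $\ell<\max(u)$ lies in a part $V_c$ with $c\le d$; if $c=d$ it is caught by (1), while if $c<d$ then $\deg u_{V_c}<k$, so the swap $m=\max(u)\in V_d$ is legal and $\ell\in\set(u)$; together with (2) this yields $\set(u)=[1,\max(u)-1]$. In case (i), note first that the part $V_{d'}$ with $\deg u_{V_{d'}}=k$ is unique, and that every other part $V_c$ with $c<d$ satisfies $\deg u_{V_c}\le 2k-k-1=k-1$, because $\max(u)\in V_d$ forces $\deg u_{V_d}\ge1$ while the total degree is $2k$. Hence each such $V_c$ ($c<d$, $c\ne d'$) lies entirely in $\set(u)$ (swap with $m=\max(u)$); inside $V_{d'}$ only swaps within $V_{d'}$ are legal, so $\set(u)\cap V_{d'}=[t_{d'-1}+1,p-1]$ with $p=\max(u_{V_{d'}})$; the part $V_d$ contributes $[t_{d-1}+1,\max(u)-1]$ by (1); and nothing $\ge\max(u)$ appears by (2). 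Assembling these consecutive intervals telescopes to $[1,p-1]\cup[t_{d'}+1,\max(u)-1]$, as claimed. The only genuine obstacle is the bookkeeping in this last assembly together with the degenerate configurations ($V_{d'}$ a singleton, $p=t_{d'-1}+1$, $d'=d-1$, or $u$ missing some parts), all of which are routine once the two combinatorial criteria above are in place.
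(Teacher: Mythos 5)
Your proposal is correct. Note that the paper itself offers no proof of this lemma; it is quoted verbatim from \cite[Proposition 4.7]{FQ1}, so there is no in-text argument to compare against. Your self-contained derivation is sound: the characterization of $\mathcal{G}(I^k)$ as the degree-$2k$ monomials with $\deg u_{V_\ell}\le k$ for every part (with the contiguous-block pairing giving the edge decomposition), the standard reformulation $\ell\in\set(u)$ iff $x_\ell u/x_m\in\mathcal{G}(I^k)$ for some $m>\ell$ with $x_m\mid u$ (valid because $I^k$ is equigenerated and has linear quotients in the decreasing lex order), and the resulting swap criterion (same part: always legal; different parts: legal iff $\deg u_{V_a}<k$ for the part $V_a$ of $\ell$) together reduce the lemma to the case analysis you carry out. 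I checked the two points where one could slip: the uniqueness of $d'$ with $\deg u_{V_{d'}}=k$ (forced by $\deg u_{V_d}\ge 1$ and total degree $2k$), which makes the formula in (i) well posed, and the interval assembly $[1,t_{d'-1}]\cup[t_{d'-1}+1,p-1]\cup[t_{d'}+1,t_{d-1}]\cup[t_{d-1}+1,\max(u)-1]=[1,p-1]\cup[t_{d'}+1,\max(u)-1]$, which holds since $p\ge t_{d'-1}+1$ and $\max(u)\ge t_{d-1}+1$; the degenerate cases you defer to at the end are indeed absorbed by the empty-interval conventions. No gaps.
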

	
	We are ready to prove Proposition \ref{Prop:I(G)}.
	\begin{proof}[Proof of Proposition \ref{Prop:I(G)}]
		By Proposition \ref{Prop:I(G)matr} we can write $I=I(G)$ where $G$ is a complete multipartite graph as given in Definition \ref{Def:multi}. For $i=0,1$ the statements hold by Theorem \ref{Thm:HS1-fg}. Let $i\ge2$. We will show that $\HS_i(I^{k+1})=I\cdot\HS_1(I^k)$ for all $k\ge2$. It is enough to show that $\HS_i(I^{k+1})\subset I\cdot\HS_1(I^k)$ for all $k\ge2$. Let $w\in\mathcal{G}(\HS_i(I^{k+1}))$ for some $k\ge2$. By equation (\ref{eq:HerTak}) we can write $w={\bf x}_F e_1\cdots e_{k+1}$ with $e_i\in\mathcal{G}(I)$ and $F$ is a subset of size $i$ of $\set(u)$, where $u=e_1\cdots e_{k+1}\in\mathcal{G}(I^{k+1})$. Let $\max(u)\in V_d$ for some integer $d$. According to Lemma \ref{Lem:Ay}, one of the two conditions (i)-(ii) holds.\smallskip
		
		Assume that (i) holds. Then $\deg u_{V_{d'}}=k+1$ and so $\deg (e_j)_{V_{d'}}=1$ for all $j=1,\dots,k+1$. Let $p$ be the maximal integer belonging to $\supp(u_{V_{d'}})$. Since $x_p$ divides $u$, it divides some $e_j$, say $e_1$. Now $x_{\max(u)}$ divides some $e_j$ as well. Up to relabeling, $x_{\max(u)}$ divides either $e_1$ or $e_2$. Let $u'=e_1\cdots e_{k}\in\mathcal{G}(I^k)$. Since $k\ge2$, we have $\max(u')=\max(u)\in V_d$, $\deg(u')_{V_{d'}}=k$ and $p\in\supp(u')$ is the maximal element belonging to $\supp((u')_{V_{d'}})$. Applying again Lemma \ref{Lem:Ay}(i), this then shows that $\set(u')=\set(u)$ and so $w'={\bf x}_Fu'\in\HS_i(I^k)$. Finally, $w=e_{k+1}w'\in I\cdot\HS_i(I^k)$.\smallskip
		
		Assume that (ii) holds. Then $\set(u)=[\max(u)-1]$. Suppose that $x_{\max(u)}$ divides $e_{k+1}=x_px_q$. We claim that there exists  $j\in\{1,\dots,k\}$ such that $\set(u/e_j)=\set(u)$. If this is the case, then $F\subset\set(u/e_j)$ and so $w=e_{j}(w/e_{j})\in I\cdot\HS_i(I^k)$, as desired. Suppose now that this is not the case. Then, $\set(u/e_1)\ne\set(u)$ and $\set(u/e_2)\ne\set(u)$. Since $k\ge2$ and $x_{\max(u)}$ divides $e_{k+1}$, we have
		$$
		\max(u/e_1)=\max(u/e_2)=\max(u).
		$$
		By Lemma \ref{Lem:Ay}(i), there exist $d_1,d_2<d$ such that $\deg(u/e_1)_{V_{d_1}}=\deg(u/e_2)_{V_{d_2}}=k$. Notice that $d_1\ne d_2$, otherwise $\deg(u)_{V_{d_1}}=k+1$, which by Lemma \ref{Lem:Ay}(i) would mean that $\set(u)\ne[\max(u)-1]$ against the assumption. Hence $d_1\ne d_2$. Since $e_{k+1}=x_px_q$, this implies, up to relabeling, that $p\in V_{d_1}$ and $q\in V_{d_2}$. This contradicts the fact that $\max(u)=\max(p,q)\in V_d$. The proof is complete.
	\end{proof}
	
	\section{Asymptotic Betti numbers of $\HS_i(I^k)$}\label{sec6}
	
	In this short section, given a polymatroidal ideal $I\subset S$, we analyze the asymptotic behavior of the homological invariants of $\HS_i(I^k)$. As an immediate consequence of Theorem \ref{Thm:CMS}, we have
	\begin{Corollary}
		Let $I\subset S$ be a polymatroidal ideal. Then
		$$
		\reg\,\HS_i(I^k)\ =\ \v(\HS_i(I^k))+1\ =\ \alpha(I)k+i,
		$$
		whenever $\HS_i(I^k)\ne(0)$.
	\end{Corollary}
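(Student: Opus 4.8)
The plan is to compute the three quantities on the right-hand side separately and show they agree. The key input is Theorem~\ref{Thm:CMS}: since $I$ is polymatroidal, each $I^k$ is polymatroidal (products of polymatroidal ideals are polymatroidal), hence $\HS_i(I^k)$ is polymatroidal whenever it is nonzero. A polymatroidal ideal is generated in a single degree and has linear resolution, so its regularity equals its generating degree.

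First I would pin down the generating degree of $\HS_i(I^k)$. Since $I^k$ has linear quotients, equation~(\ref{eq:HerTak}) gives $\HS_i(I^k)=({\bf x}_F u:\ u\in\mathcal{G}(I^k),\ F\subset\set(u),\ |F|=i)$, and since $I^k$ is equigenerated in degree $\alpha(I)k$, every such generator ${\bf x}_F u$ has degree $\alpha(I)k+i$. As $\HS_i(I^k)$ is polymatroidal (hence equigenerated), $\alpha(\HS_i(I^k))=\alpha(I)k+i$ and therefore $\reg\,\HS_i(I^k)=\alpha(I)k+i$.

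Next I would compute the $\v$-number. For a polymatroidal ideal $J$, one has $\v(J)=\alpha(J)-1$: indeed $\Ass\,J=\Min\,J$ for a polymatroidal ideal (polymatroidal ideals have the persistence-of-primes/unmixedness-type behavior making every associated prime minimal), and for any monomial $u\in\mathcal{G}(J)$ with $\supp(u)=\{i_1,\dots,i_r\}$ one checks directly that $(J:u/x_{i_1})$ is a monomial prime containing the relevant variables; more precisely, for a suitable minimal generator $u$ and a suitable variable $x_\ell\mid u$, the colon $(J:u/x_\ell)$ is a minimal prime, giving $\v_\p(J)\le\alpha(J)-1$ for that $\p$, while $\v(J)\ge\reg\,J-\pd$-type inequalities or the general bound $\v(J)\le\reg(J)$ together with a lower bound $\v(J)\ge\indeg(J)-1$ force equality. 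Substituting $J=\HS_i(I^k)$ and $\alpha(\HS_i(I^k))=\alpha(I)k+i$ yields $\v(\HS_i(I^k))=\alpha(I)k+i-1=\reg\,\HS_i(I^k)-1$, which is exactly the asserted chain of equalities.

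The main obstacle is the clean identity $\v(J)=\reg(J)-1=\alpha(J)-1$ for a polymatroidal ideal $J$; this is presumably known (it is the statement that polymatroidal ideals are ``$\v$-number optimal'' in the sense $\v=\indeg-1$, analogous to what holds for ideals with linear resolution whose associated primes are all minimal and achieve the minimal possible degree witness), but I would need to either cite it from the $\v$-number literature referenced in Section~\ref{sec2} (e.g.\ \cite{CSTVV,F2023,FS2}) or give a short self-contained argument using the exchange property: given $u\in\mathcal{G}(J)$ and a variable $x_\ell\mid u$, produce via the exchange property enough generators to show $(J:u/x_\ell)=(x_j:\ j\in T)$ for an appropriate prime $T$, and identify one choice making $T$ a minimal prime of $J$. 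Once that identity is in hand, the corollary is immediate from the degree count above. I would also remark that the hypothesis $\HS_i(I^k)\ne(0)$ is exactly $i\le\pd(I^k)=n-\depth(S/I^k)$, and that for polymatroidal ideals $\depth(S/I^k)$ is itself known, so the range of validity can be made explicit if desired.
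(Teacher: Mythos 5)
Your proposal follows essentially the same route as the paper: invoke Theorem~\ref{Thm:CMS} to get that $\HS_i(I^k)$ is polymatroidal, read off $\reg=\alpha=\alpha(I)k+i$ from the linear resolution and the degree count, and reduce the $\v$-number claim to the known identity $\v(J)=\alpha(J)-1$ for polymatroidal $J$, which the paper indeed just cites (it is \cite[Theorem 5.5]{F2023}). One caution about your fallback sketch: the claim that $\Ass\,J=\Min\,J$ for every polymatroidal ideal is false (e.g.\ $J=xy(x,y)=(x^2y,xy^2)$ is polymatroidal with $(x,y)$ embedded), so if you did want a self-contained proof of $\v(J)=\alpha(J)-1$ you would need the exchange-property argument producing a degree-$(\alpha(J)-1)$ witness $f$ with $(J:f)$ prime, not an unmixedness statement; since you ultimately defer to the citation, the corollary stands as you argue it.
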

	\begin{proof}
		Since $I^k$ is polymatroidal, Theorem \ref{Thm:CMS} implies that $\HS_i(I^k)$ is again polymatroidal. Suppose that $\HS_i(I^k)\ne(0)$. Then $\reg\,\HS_i(I^k)=\alpha(\HS_i(I^k))=\alpha(I)k+i$ and by \cite[Theorem 5.5]{F2023} we have $\v(\HS_i(I^k))=\alpha(\HS_i(I^k))-1$.
	\end{proof}
	
	In the next result, we prove that the graded Betti numbers of $\HS_i(I^k)$ increase as $k$ increases.
	
	\begin{Theorem}\label{Thm:bettiPol}
		Let $I\subset S$ be a polymatroidal ideal. Then
		$$
		\beta_j(\HS_i(I^k))\ \le\ \beta_j(\HS_i(I^{k+1}))
		$$
		for all $k\ge1$. In particular, the function $k\mapsto\depth\,S/\HS_i(I^k)$ is non-increasing. That is,
		$$
		\depth\,S/\HS_i(I^k)\ \ge\ \depth\,S/\HS_i(I^{k+1}),
		$$
		for all $k\ge1$.
	\end{Theorem}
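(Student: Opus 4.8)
The plan is to deduce the Betti inequality from a general monotonicity principle for containments of polymatroidal ideals generated in a single common degree, combined with the containment $I\cdot\HS_i(I^k)\subseteq\HS_i(I^{k+1})$ from Theorem~\ref{Thm:resume-HS(R(I))}. Note that we do \emph{not} know $\HS_i(I^{k+1})=I\cdot\HS_i(I^k)$ in general for $i\geq2$ (this would essentially be Conjecture~\ref{Conj:GenDeg}), so $\HS_i(I^k)$ and $\HS_i(I^{k+1})$ cannot be compared directly; the trick is that a one-sided containment in the same degree already suffices. Concretely, fix any $e\in\mathcal{G}(I)$ and set $L:=e\cdot\HS_i(I^k)$ and $L':=\HS_i(I^{k+1})$. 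Then $L=e\cdot\HS_i(I^k)\subseteq I\cdot\HS_i(I^k)\subseteq L'$. Since $(e)$ and $\HS_i(I^k)$ are polymatroidal — the latter by Theorem~\ref{Thm:CMS} — and products of polymatroidal ideals are polymatroidal, both $L$ and $L'$ are polymatroidal, and both are generated in degree $\alpha(I)(k+1)+i$ (using that $I^{k+1}$ is equigenerated in degree $(k+1)\alpha(I)$ together with $(\ref{eq:HerTak})$). Finally, multiplication by the monomial $e$ is a graded isomorphism $\HS_i(I^k)\to L$ up to a degree shift, so $\beta_j(\HS_i(I^k))=\beta_j(L)$ for all $j$.

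The key lemma I would isolate is the following: \emph{if $L\subseteq L'$ are polymatroidal ideals generated in the same degree, then $\beta_j(L)\leq\beta_j(L')$ for all $j\geq0$.} Its proof would go as follows. Since $L$ and $L'$ are generated in the same degree and $L\subseteq L'$, every minimal generator of $L$ is a minimal generator of $L'$, i.e.\ $\mathcal{G}(L)\subseteq\mathcal{G}(L')$. Fix the lexicographic order of $S$ induced by $x_1>\cdots>x_n$; listing the generators of $L$ (resp.\ of $L'$) in decreasing lex order gives linear quotients, and for $u\in\mathcal{G}(L)$ the set $\set_L(u)$ consists of those $\ell$ with $x_\ell\in(u_1,\dots,u_{s-1}):u$, where $u_1,\dots,u_{s-1}$ are the generators of $L$ that are lex-larger than $u$; similarly for $L'$. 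Because the family of lex-larger generators of $L$ is contained in that of $L'$, the corresponding colon ideal can only grow, so $\set_L(u)\subseteq\set_{L'}(u)$, hence $|\set_L(u)|\leq|\set_{L'}(u)|$. Combining this with $\mathcal{G}(L)\subseteq\mathcal{G}(L')$ and the standard formula $\beta_j(J)=\sum_{u\in\mathcal{G}(J)}\binom{|\set_J(u)|}{j}$ for the Betti numbers of an ideal with linear quotients (the iterated mapping-cone resolution being minimal) yields
\[
\beta_j(L)=\sum_{u\in\mathcal{G}(L)}\binom{|\set_L(u)|}{j}\ \leq\ \sum_{u\in\mathcal{G}(L)}\binom{|\set_{L'}(u)|}{j}\ \leq\ \sum_{u\in\mathcal{G}(L')}\binom{|\set_{L'}(u)|}{j}=\beta_j(L').
\]

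Assembling the pieces gives $\beta_j(\HS_i(I^k))=\beta_j(L)\leq\beta_j(L')=\beta_j(\HS_i(I^{k+1}))$ for all $i,j$ and all $k\geq1$. For the depth statement one may assume $\HS_i(I^k)\neq0$ (otherwise $\depth\,S/\HS_i(I^k)=n$ dominates trivially); then the Betti inequalities give $\pd\,\HS_i(I^k)\leq\pd\,\HS_i(I^{k+1})$, and the Auslander--Buchsbaum formula $\depth\,S/\HS_i(I^k)=n-1-\pd\,\HS_i(I^k)$ turns this into $\depth\,S/\HS_i(I^k)\geq\depth\,S/\HS_i(I^{k+1})$.

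I expect the main point to be the lemma, and what makes it work is polymatroidality in the strong form ``linear quotients with respect to \emph{any} lexicographic order of the generators'': this is precisely what upgrades the containment $\mathcal{G}(L)\subseteq\mathcal{G}(L')$ to the pointwise inclusion $\set_L(u)\subseteq\set_{L'}(u)$. Without a common order-compatible linear-quotients structure on both ideals the $\set$-functions cannot be compared, and the naive statement ``$L\subseteq L'\Rightarrow\beta_j(L)\leq\beta_j(L')$'' already fails for equigenerated monomial ideals (e.g.\ $\m^2\subseteq\m$ in three variables, where $\beta_1(\m^2)=8>3=\beta_1(\m)$).
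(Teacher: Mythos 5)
Your proof is correct, and its outer skeleton coincides with the paper's: fix a generator $e\in\mathcal{G}(I)$, use $e\cdot\HS_i(I^k)\subseteq I\cdot\HS_i(I^k)\subseteq\HS_i(I^{k+1})$ together with the fact that multiplication by a monomial preserves Betti numbers, and then invoke a monotonicity lemma for a containment of equigenerated ideals. Where you genuinely diverge is in the lemma itself and its proof. The paper isolates the statement: if $J\subseteq I$ both have $d$-linear resolution, then $\beta_j(J)\le\beta_j(I)$; the proof is a two-line homological argument from the long exact Tor sequence of $0\to J\to I\to I/J\to 0$, using only that $I/J$ is generated in degree $d$ to kill $\Tor_{j+1}(K,I/J)_{j+d}$ and get injectivity of $\Tor_j(K,J)_{j+d}\to\Tor_j(K,I)_{j+d}$. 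Your lemma is stated only for polymatroidal ideals and is proved combinatorially, via $\mathcal{G}(L)\subseteq\mathcal{G}(L')$, the pointwise inclusion $\set_L(u)\subseteq\set_{L'}(u)$ for the common lex linear-quotients order, and the Herzog--Takayama formula $\beta_j=\sum_u\binom{|\set(u)|}{j}$; all of these steps are valid, and your observation that the decisive input is linear quotients with respect to the \emph{same} lex order on both ideals is accurate. The paper's route buys generality (any linear resolution, no linear quotients needed) and brevity; yours buys an explicit combinatorial explanation of \emph{why} the Betti numbers grow, at the cost of invoking the mapping-cone minimality and the full strength of polymatroidality. Your treatment of the depth consequence via Auslander--Buchsbaum, including the degenerate case $\HS_i(I^k)=0$, is also fine.
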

	
	For the proof of this result, we need the following lemma.
	\begin{Lemma}\label{Lem:Pol1}
		Let $I,J\subset S$ be ideals with $d$-linear resolution such that $J\subset I$. Then
		$$
		\beta_j(J)\le\beta_j(I),\quad\textit{for all}\ j\ge0.
		$$
	\end{Lemma}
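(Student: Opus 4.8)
The plan is to exploit the short exact sequence $0\to J\to I\to I/J\to 0$ together with the fact that both $I$ and $J$ have a $d$-linear resolution, so that the only possibly nonzero graded Betti numbers are $\beta_{j}(J)=\beta_{j,j+d}(J)$ and $\beta_{j}(I)=\beta_{j,j+d}(I)$. The associated long exact sequence in $\Tor^{S}(K,-)$ reads
\[
\cdots\ \to\ \Tor^{S}_{j+1}(K,I/J)_{\bf a}\ \to\ \Tor^{S}_{j}(K,J)_{\bf a}\ \to\ \Tor^{S}_{j}(K,I)_{\bf a}\ \to\ \Tor^{S}_{j}(K,I/J)_{\bf a}\ \to\ \cdots
\]
for every multidegree ${\bf a}$ (the sequence is multigraded since $I,J$ are monomial ideals). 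If I can show that the connecting map $\Tor^{S}_{j+1}(K,I/J)\to\Tor^{S}_{j}(K,J)$ vanishes in the relevant internal degree $j+d$, then $\Tor^{S}_{j}(K,J)_{j+d}$ injects into $\Tor^{S}_{j}(K,I)_{j+d}$, which gives exactly $\beta_{j}(J)\le\beta_{j}(I)$.

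The key step is therefore a degree-count argument on $I/J$. First I would note that $(I/J)_{e}=0$ for all $e<d$, since $J\subset I$ are both generated in degree $d$; hence the minimal free resolution of $I/J$ starts in internal degree $\ge d$, and more precisely $\Tor^{S}_{0}(K,I/J)$ lives in degrees $\ge d$, so by the minimality of the resolution $\Tor^{S}_{p}(K,I/J)$ lives in internal degrees $\ge d+p$ for all $p$. Applying this with $p=j+1$ gives that $\Tor^{S}_{j+1}(K,I/J)$ is concentrated in internal degrees $\ge d+j+1$, hence in particular $\Tor^{S}_{j+1}(K,I/J)_{j+d}=0$. Feeding this into the long exact sequence above in internal degree $j+d$ makes the connecting homomorphism into $\Tor^{S}_{j}(K,J)_{j+d}$ automatically zero, so
\[
0\ \to\ \Tor^{S}_{j}(K,J)_{j+d}\ \to\ \Tor^{S}_{j}(K,I)_{j+d}
\]
is exact, and taking $K$-dimensions yields $\beta_{j,j+d}(J)\le\beta_{j,j+d}(I)$. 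Since the $d$-linearity of both resolutions forces $\beta_{j}(J)=\beta_{j,j+d}(J)$ and $\beta_{j}(I)=\beta_{j,j+d}(I)$, and all other graded Betti numbers vanish on both sides, this is precisely the claimed inequality $\beta_{j}(J)\le\beta_{j}(I)$ for all $j\ge0$.

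The only genuine subtlety — and the step I would be most careful about — is the claim that minimality of the resolution of $I/J$ forces $\Tor^{S}_{p}(K,I/J)$ to sit in internal degrees $\ge d+p$; this uses that $(I/J)$ is generated in degrees $\ge d$ and that, in a minimal graded free resolution, the generators of the $p$-th syzygy module have degree strictly larger than those of the $(p-1)$-st, so the bottom degree at homological step $p$ is at least (bottom degree at step $0$) $+\,p \ge d+p$. Everything else is a formal consequence of the long exact Tor sequence and the linearity hypothesis. An alternative phrasing, which avoids the syzygy bookkeeping, is to observe that $\reg(I/J)=d$ would suffice but need not hold in general; so the degree-shift argument via minimality (equivalently, via the fact that $\Tor_{p}(K,M)_{e}=0$ for $e<p+\indeg(M)$ when $M$ is generated in degrees $\ge\indeg(M)$) is the cleanest route.
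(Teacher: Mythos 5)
Your proposal is correct and follows essentially the same route as the paper: the short exact sequence $0\to J\to I\to I/J\to 0$, the vanishing $\Tor^S_{j+1}(K,I/J)_{j+d}=0$ because $I/J$ lives in degrees $\ge d$, the resulting injection $\Tor^S_j(K,J)_{j+d}\hookrightarrow\Tor^S_j(K,I)_{j+d}$, and the identification $\beta_j=\beta_{j,j+d}$ from $d$-linearity. The only difference is that you spell out the standard degree-shift bound $\Tor_p(K,M)_e=0$ for $e<p+\indeg(M)$, which the paper leaves implicit.
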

	\begin{proof}
		The short exact sequence $0\rightarrow J\rightarrow I\rightarrow I/J\rightarrow 0$ induces the exact sequence
		$$
		\Tor^S_{j+1}(K,I/J)_{j+d}\rightarrow\Tor^S_{j}(K,J)_{j+d}\rightarrow\Tor^S_{j}(K,I)_{j+d}.
		$$
		Since $I/J$ is generated in degree $d$, we have $\Tor^S_{j+1}(K,I/J)_{j+d}=0$. Therefore, the map $\Tor^S_{j}(K,J)_{j+d}\rightarrow\Tor^S_{j}(K,I)_{j+d}$ is injective. Since $I$ and $J$ have $d$-linear resolution, we deduce that $\beta_j(J)=\beta_{j,j+d}(J)\le\beta_{j,j+d}(I)=\beta_j(I)$.
	\end{proof}
	
	We are now ready to prove Theorem \ref{Thm:bettiPol}.
	\begin{proof}[Proof of Theorem \ref{Thm:bettiPol}]
		If $\HS_i(I^k)=(0)$ there is nothing to prove. Let $\HS_i(I^k)\ne(0)$.
		Since $I$ has linear powers, Theorem \ref{Thm:resume-HS(R(I))} guarantees that $I\cdot\HS_i(I^k)\subset\HS_i(I^{k+1})$. Now, let $u\in\mathcal{G}(I)$. Then $(u)\HS_i(I^k)\subset I\cdot\HS_i(I^k)\subset\HS_i(I^{k+1})$. By Lemma \ref{Lem:Pol1} and Theorem \ref{Thm:CMS} we have
		$$
		\beta_j(\HS_i(I^k))\ =\ \beta_j((u)\HS_i(I^k))\ \le\ \beta_j(\HS_i(I^{k+1})),
		$$
		for all $j$ and all $k\ge1$.
	\end{proof}
	
	Let $\m=(x_1,\dots,x_n)$ be the unique graded maximal ideal of $S$. As an immediate consequence of Theorem \ref{Thm:bettiPol} we have
	
	\begin{Corollary}\label{Cor:Ass-m}
		Let $I\subset S$ be a polymatroidal ideal. If $\m\in\Ass\,\HS_i(I^{k_0})$ for some $k_0>0$, then $\m\in\Ass\,\HS_i(I^k)$ for all $k\ge k_0$.
	\end{Corollary}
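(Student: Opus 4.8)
The plan is to deduce the statement directly from the monotonicity of the depth function recorded in Theorem~\ref{Thm:bettiPol}, using the standard criterion that detects when the graded maximal ideal is an associated prime. First I would recall that for a finitely generated graded $S$-module $M$ one has $\m\in\Ass M$ if and only if $\depth M=0$ (equivalently, $\mathrm{Hom}_S(K,M)\neq 0$). Applying this with $M=S/\HS_i(I^{k_0})$, the hypothesis $\m\in\Ass\,\HS_i(I^{k_0})$ is exactly the statement that $\depth\,S/\HS_i(I^{k_0})=0$; in particular $\HS_i(I^{k_0})\neq(0)$, since otherwise $S/\HS_i(I^{k_0})=S$ would have depth $n>0$.

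Next I would invoke Theorem~\ref{Thm:bettiPol}: since $I$ is polymatroidal, the function $k\mapsto\depth\,S/\HS_i(I^k)$ is non-increasing, so for every $k\ge k_0$ we obtain $\depth\,S/\HS_i(I^k)\le\depth\,S/\HS_i(I^{k_0})=0$. Because depth is always non-negative, this forces $\depth\,S/\HS_i(I^k)=0$ for all $k\ge k_0$. Running the depth criterion in the reverse direction then gives $\m\in\Ass\,\HS_i(I^k)$ for all $k\ge k_0$, which is the assertion. One implicit point to check along the way is that $\HS_i(I^k)\neq(0)$ for every $k\ge k_0$, so that "$\depth=0$" genuinely records $\m$ as an associated prime rather than just being the depth of $S$; this follows from the inclusion $I\cdot\HS_i(I^{k})\subseteq\HS_i(I^{k+1})$ (valid because polymatroidal ideals have linear powers, Theorem~\ref{Thm:resume-HS(R(I))}) together with $S$ being a domain, which propagates non-vanishing from $k_0$ upward.

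Since the whole argument is a two-line consequence of Theorem~\ref{Thm:bettiPol}, there is no substantial obstacle here; the only place that warrants a moment of care is the bookkeeping around possibly-zero homological shift ideals, handled by the linear-powers remark above.
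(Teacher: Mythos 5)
Your argument is correct and is essentially identical to the paper's proof: both reduce the statement to the criterion $\m\in\Ass\,\HS_i(I^{k_0})\iff\depth\,S/\HS_i(I^{k_0})=0$ and then apply the non-increasing depth function from Theorem~\ref{Thm:bettiPol}. Your extra remark on propagating the non-vanishing of $\HS_i(I^k)$ via $I\cdot\HS_i(I^k)\subseteq\HS_i(I^{k+1})$ is a reasonable piece of bookkeeping that the paper leaves implicit.
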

	\begin{proof}
		We have $\m\in\Ass\,\HS_i(I^{k_0})$ if and only if $\depth\,S/\HS_i(I^{k_0})=0$. Suppose that $\m\in\Ass\,\HS_i(I^{k_0})$ for some $k_0>0$. Since by Theorem \ref{Thm:bettiPol} the depth function $k\mapsto\depth\,S/\HS_i(I^k)$ is non-increasing, it follows that $\depth\,S/\HS_i(I^k)=0$ for all $k\ge k_0$. Hence $\m\in\Ass\,\HS_i(I^k)$ for all $k\ge k_0$.
	\end{proof}
	
	Corollary \ref{Cor:Ass-m} is not valid for an arbitrary monomial prime ideal.
	\begin{Example}
		\rm Let $I=(x_{1}x_{3},x_{1}x_{4},x_{1}x_{5},x_{2}x_{3},x_{2}x_{4},x_{2}x_{5},x_{3}x_{4},x_{3}x_{5})$. Then $I$ is polymatroidal, $P=(x_1,x_5)\in\Ass\,\HS_2(I)$ but $P\notin\Ass\,\HS_2(I^2)$. However, using the \textit{Macaulay2} \cite{GDS} package \textup{\texttt{HomologicalShiftIdeals}} \cite{FPack1} we have
		$$
		\Ass\,\HS_2(I^2)\subset\Ass\,\HS_2(I^3)\subset\Ass\,\HS_2(I^4)\subset\cdots.
		$$
		This fact is proven for any matroidal edge ideal in Corollary \ref{Cor:someCases}(c).
	\end{Example}
	
	\section{Asymptotic associated primes of $\HS_i(I^k)$}\label{sec7}
	
	We now turn our attention to the behavior of the sequence $\{\Ass\,\HS_i(I^k)\}_{k>0}$ when $I\subset S$ is a polymatroidal ideal. In Corollary \ref{Cor:Ass-m} we have seen that if $I\subset S$ is polymatroidal and $\m\in\Ass\,\HS_i(I^{k_0})$ for some $k_0>0$, then $\m\in\Ass\,\HS_i(I^k)$ for all $k\ge k_0$. Due to this result and several experimental evidence, we expect that
	\begin{Conjecture}\label{Conj:AssPolym}
		Let $I\subset S$ be a polymatroidal ideal. Then
		$$
		\Ass\,\HS_i(I^{i})\subset\Ass\,\HS_i(I^{i+1})\subset\Ass\,\HS_i(I^{i+2})\subset\cdots,\quad\text{for all}\ i>0.
		$$
	\end{Conjecture}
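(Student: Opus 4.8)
The way to make progress on this conjecture is to deduce it from Conjecture \ref{Conj:GenDeg} and then to invoke the cases of the latter that are already available. So the plan is: first prove the implication of Theorem \ref{Thm:Implication}, that Conjecture \ref{Conj:GenDeg} implies Conjecture \ref{Conj:AssPolym}; then apply it with $i=1$ (Theorem \ref{Thm:HS1-fg}), with $I$ a principal Borel ideal (Proposition \ref{Prop:B(u)}), with $I$ satisfying the strong exchange property (Proposition \ref{Prop:VerType}), and with $I$ a matroidal edge ideal (Proposition \ref{Prop:I(G)}), thereby establishing the conjecture unconditionally in each of these situations.

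For the implication, suppose $\HS_i(\mathcal R(I))$ is generated in degrees $\le i$. Combining this with the inclusions $I\cdot\HS_i(I^k)\subseteq\HS_i(I^{k+1})$ from Theorem \ref{Thm:resume-HS(R(I))}, a routine degree bookkeeping gives $\HS_i(I^{k+1})=I\cdot\HS_i(I^k)$ for every $k\ge i$, and hence $\HS_i(I^{k+t})=I^t\HS_i(I^k)$ for all $t\ge0$ and all $k\ge i$. The next — and technically most delicate — step is to upgrade this multiplicative identity to the \emph{$i$th homological strong persistence property from degree $i$}, i.e.\ $\HS_i(I^{k+1}):I=\HS_i(I^k)$ for all $k\ge i$. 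Fix $k\ge i$, write $W=\HS_i(I^k)$, and note that since $\HS_i(I^{k+t})=I^tW\subseteq I^{k+t}\subseteq I^t$, the direct sum $\mathcal N=\bigoplus_{t\ge0}I^tW$ is a genuine graded ideal of the Noetherian Rees algebra $\mathcal R(I)$, generated by $W\subseteq\mathcal R(I)_0$. If $f\in S$ satisfies $fI\subseteq IW$, then $fI^t\subseteq I^tW$ for every $t\ge1$, i.e.\ $f\cdot\mathcal R(I)_{+}\subseteq\mathcal N$; applying the determinantal trick \cite[Corollary 1.1.8]{HS2006} over $\mathcal R(I)$ then forces $f$ into the degree-zero component of $\mathcal N$, namely $f\in W$. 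This yields $\HS_i(I^{k+1}):I=IW:I=W=\HS_i(I^k)$.

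With the strong persistence property in hand, persistence of associated primes is the classical localization argument. Let $P\in\Ass\,\HS_i(I^k)$ with $k\ge i$; after localizing at $P$ we may take $(S,P)$ local and choose $f\in S\setminus\HS_i(I^k)$ with $\HS_i(I^k):f=P$, so that $P(If)\subseteq I\cdot\HS_i(I^k)=\HS_i(I^{k+1})$. Were $If\subseteq\HS_i(I^{k+1})$, strong persistence would give $f\in\HS_i(I^{k+1}):I=\HS_i(I^k)$, a contradiction; hence some $g\in If$ lies outside $\HS_i(I^{k+1})$, and then $\HS_i(I^{k+1}):g$ is a proper ideal containing $P$, so it equals $P$ by maximality, giving $P\in\Ass\,\HS_i(I^{k+1})$. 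Thus $\Ass\,\HS_i(I^k)\subseteq\Ass\,\HS_i(I^{k+1})$ for all $k\ge i$, which is precisely Conjecture \ref{Conj:AssPolym}; together with the first paragraph this settles it for $i=1$ and for the families (a), (b), (c), consistently with Corollary \ref{Cor:Ass-m} and Corollary \ref{Cor:someCases}.

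The main obstacle is the general case of Conjecture \ref{Conj:GenDeg} for $i\ge2$: no uniform strategy is known, and the families above must be handled by rather different ad hoc arguments — explicit descriptions of $\set(u)$ for principal Borel and matroidal edge ideals, and the Möbius-function machinery of \cite{CMS} for the strong exchange property. A secondary but genuine subtlety is the passage from the identity $\HS_i(I^{k+1})=I\cdot\HS_i(I^k)$ to the colon identity $\HS_i(I^{k+1}):I=\HS_i(I^k)$: because $\HS_i(I^k)$ is not a power of an ideal, the naive colon computation only produces $If\subseteq\HS_i(I^k)$ rather than $f\in\HS_i(I^k)$, and it is exactly this gap that the determinantal trick is brought in to close.
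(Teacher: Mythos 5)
Your overall architecture matches the paper's: reduce Conjecture \ref{Conj:AssPolym} to Conjecture \ref{Conj:GenDeg} via a colon identity $\HS_i(I^{k+1}):I=\HS_i(I^k)$ (the $i$th homological strong persistence property), then invoke Theorem \ref{Thm:HS1-fg} and Propositions \ref{Prop:B(u)}, \ref{Prop:VerType}, \ref{Prop:I(G)} for the unconditional cases. The final localization argument deducing persistence of associated primes from the colon identity is also fine (it is \cite[Proposition 3.1]{FQ1}).

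However, there is a genuine gap at the step you yourself flag as the delicate one. Writing $W=\HS_i(I^k)$, from $fI\subseteq IW$ the determinantal trick \cite[Corollary 1.1.8]{HS2006} (applied to the faithful finitely generated module $I$, or in your Rees-algebra packaging to $\mathcal R(I)_+$) only yields that $f$ satisfies an equation of \emph{integral dependence} over $W$; it does not force $f\in W$. Your assertion that it ``forces $f$ into the degree-zero component of $\mathcal N$'' is false in general: for $W=(x^2,y^2)$ and $I=(x,y)$ in $K[x,y]$ one has $xy\cdot I\subseteq IW$ but $xy\notin W$ (here $xy$ is integral over $W$ but $W$ is not integrally closed). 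To close the gap one must know that $W=\HS_i(I^k)$ is \emph{integrally closed}, and this is exactly where the substantive input of the paper's Theorem \ref{Thm:Implication} lies: by Theorem \ref{Thm:CMS} the ideal $\HS_i(I^k)$ is again polymatroidal, and polymatroidal ideals are integrally closed by \cite[Theorem 2.4]{HRV}. Only then does integrality over $W$ give $f\in W$ (this is the content of the paper's Theorem \ref{Thm:IntCl-General}, whose hypothesis that $I^kJ$ be integrally closed is indispensable). Your write-up never invokes the polymatroidality or integral closedness of the homological shift ideals, so as it stands the passage from the multiplicative identity to the colon identity is unproved.
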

	
	When a monomial ideal $I\subset S$ satisfies the chain of inclusions
	$$
	\Ass\,\HS_i(I)\subset\Ass\,\HS_i(I^2)\subset\Ass\,\HS_i(I^3)\subset\cdots,
	$$
	for some $i$, we say that $I$ satisfies the $i$th \textit{homological persistence property}.
	
	Whereas, if
	$$
	\HS_i(I^{k+1}):I\ =\ \HS_i(I^k),\quad\textup{for all}\ k\ge1,
	$$
	for some $i$, we say that $I$ satisfies the $i$th \textit{homological strong persistence property}.
	
	In \cite[Proposition 3.1]{FQ1}, it is shown that if $\HS_i(I^{k+1}):I=\HS_i(I^k)$ for some $k$, then $\Ass\,\HS_i(I^k)\subset\Ass\,\HS_i(I^{k+1})$. Hence, the $i$th homological strong persistence property implies the $i$th homological persistence property.\medskip
	
	We need the following general result, which follows from the determinantal trick.
	\begin{Theorem}\label{Thm:IntCl-General}
		Let $I,J$ be ideals of a Noetherian domain $R$. Suppose that $I^{k}J$ is integrally closed for some $k\ge0$. Then $I^{k+1}J:I=I^{k}J$.
	\end{Theorem}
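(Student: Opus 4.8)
The plan is to prove the nontrivial inclusion $I^{k+1}J:I\subseteq I^{k}J$ by means of the determinantal trick; the reverse inclusion $I^{k}J\subseteq I^{k+1}J:I$ is immediate, since $I\cdot(I^{k}J)=I^{k+1}J$. We may assume $I\neq(0)$. Fix $x\in I^{k+1}J:I$, so that $xI\subseteq I^{k+1}J$. Because $I^{k}J$ is integrally closed by hypothesis, it suffices to show that $x$ is integral over $I^{k}J$, namely that $x$ satisfies an equation of integral dependence
\[
x^{m}+a_{1}x^{m-1}+\dots+a_{m}=0,\qquad a_{\ell}\in(I^{k}J)^{\ell}\ \text{ for }\ \ell=1,\dots,m.
\]

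To produce such an equation I would apply the determinantal trick \cite[Corollary 1.1.8]{HS2006} to the finitely generated $R$-module $M=I$, the ideal $\mathfrak{a}=I^{k}J$, and the endomorphism of $M$ given by multiplication by $x$. The hypothesis supplies precisely the inclusion that is needed: $x\cdot M=xI\subseteq I^{k+1}J=(I^{k}J)\,I=\mathfrak{a}M$. Since $R$ is a Noetherian domain and $I\neq(0)$, the ideal $M=I$ is a finitely generated \emph{faithful} $R$-module, so that $(0:_{R}M)=(0)$. The determinantal trick then yields a monic polynomial $t^{m}+a_{1}t^{m-1}+\dots+a_{m}$ with $a_{\ell}\in\mathfrak{a}^{\ell}$ whose value at $x$ annihilates $M$; by faithfulness of $M$, this value vanishes in $R$, which is exactly the desired equation of integral dependence of $x$ over $I^{k}J$. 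Therefore $x\in\overline{I^{k}J}=I^{k}J$, and the argument is complete.

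The proof is short and no serious obstacle is expected; the one point deserving attention is that the determinantal trick, applied to $M$, a priori produces a relation valid only as an endomorphism of $M$, and one passes to a genuine relation in $R$ using the faithfulness of $M$ --- which is why the choice $M=I$, a nonzero and hence faithful finitely generated ideal of the domain $R$, is convenient. In the intended applications $R$ will be the polynomial ring $S$, or a localization of it, $J$ an appropriate homological shift ideal, and the hypothesis that $I^{k}J$ is integrally closed will be guaranteed by the normality of polymatroidal ideals together with the fact that products of polymatroidal ideals remain polymatroidal; this yields the $i$th homological strong persistence property in the relevant cases.
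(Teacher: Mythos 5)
Your proposal is correct and follows essentially the same route as the paper: both reduce to showing that any $f$ with $fI\subseteq I^{k+1}J=(I^{k}J)I$ is integral over $I^{k}J$ via the determinantal trick, using that $I$ is a faithful finitely generated module over the domain $R$, and then conclude by integral closedness of $I^{k}J$. Your write-up merely makes explicit the choice $M=I$ and the faithfulness step that the paper leaves implicit.
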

	\begin{proof}
		It is clear that $I^{k}J\subset I^{k+1}J:I$. Conversely, let $f\in I^{k+1}J:I$, then $fI\subset I^{k+1}J=(I^{k}J)I$. Using the determinantal trick \cite[Corollary 1.1.8]{HS2006}, and since $R$ is a domain, we see that $f$ is an integral element over $I^{k}J$. But $I^{k}J$ is integrally closed by assumption. We conclude that $f\in I^{k}J$, as desired.
	\end{proof}
	
	As a consequence of this result, we have
	
	\begin{Theorem}\label{Thm:Implication}
		Conjecture \ref{Conj:GenDeg} implies Conjecture \ref{Conj:AssPolym}.
	\end{Theorem}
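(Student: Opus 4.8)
The plan is to show that, under Conjecture \ref{Conj:GenDeg}, the polymatroidal ideal $I$ satisfies the colon identities
$$
\HS_i(I^{k+1}):I\ =\ \HS_i(I^k),\qquad k\ge i,
$$
--- a homological strong persistence statement restricted to the range $k\ge i$ --- and then to invoke \cite[Proposition 3.1]{FQ1}, which turns each such identity into an inclusion $\Ass\,\HS_i(I^k)\subset\Ass\,\HS_i(I^{k+1})$. So fix a polymatroidal ideal $I\subset S$ and an integer $i>0$, and assume that $\HS_i(\mathcal{R}(I))=\bigoplus_{m\ge1}\HS_i(I^m)$ is generated in degrees $\le i$ over $\mathcal{R}(I)=\bigoplus_{m\ge0}I^m$.

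First I would translate this generation bound into an equality of ideals:
$$
\HS_i(I^{k+1})\ =\ I\cdot\HS_i(I^k),\qquad k\ge i.
$$
The inclusion $\supseteq$ holds for all $k\ge1$ by Theorem \ref{Thm:resume-HS(R(I))}. For $\subseteq$, note that the degree-$(k+1)$ component of a module generated in degrees $\le i$ is $\HS_i(I^{k+1})=\sum_{j=1}^{i}I^{k+1-j}\HS_i(I^j)$ once $k\ge i$; since $1\le j\le i\le k$ we may write $I^{k+1-j}=I\cdot I^{k-j}$, and $I^{k-j}\HS_i(I^j)\subseteq\HS_i(I^k)$ by iterating Theorem \ref{Thm:resume-HS(R(I))}, whence $\HS_i(I^{k+1})\subseteq I\cdot\HS_i(I^k)$. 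This small degree count is the one genuinely delicate step; it is also the mechanism by which the explicit identities $\HS_i(I^{k+1})=I\cdot\HS_i(I^k)$ proved for the families (a)--(c) in Section \ref{sec5} will yield the corresponding chains of associated primes.

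Next I would feed this into the determinantal trick. Fix $k\ge i$; if $\HS_i(I^k)=(0)$ then $\HS_i(I^{k+1})=(0)$ as well, and there is nothing to prove, so assume $\HS_i(I^k)\ne(0)$. By Theorem \ref{Thm:CMS} the ideal $\HS_i(I^k)$ is polymatroidal, hence integrally closed. Applying Theorem \ref{Thm:IntCl-General} in the Noetherian domain $S$, with the ideals $I$ and $J=\HS_i(I^k)$ and exponent $0$ --- so that $I^0J=J$ is integrally closed --- gives
$$
\HS_i(I^{k+1}):I\ =\ \big(I\cdot\HS_i(I^k)\big):I\ =\ \HS_i(I^k).
$$
Hence $\HS_i(I^{k+1}):I=\HS_i(I^k)$ for all $k\ge i$. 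By \cite[Proposition 3.1]{FQ1} this gives $\Ass\,\HS_i(I^k)\subset\Ass\,\HS_i(I^{k+1})$ for all $k\ge i$ (trivially so in the vanishing case, where the left-hand side is empty), and concatenating these inclusions over $k=i,i+1,i+2,\dots$ produces exactly the chain asserted in Conjecture \ref{Conj:AssPolym}.

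Since Theorem \ref{Thm:CMS}, Theorem \ref{Thm:IntCl-General} and \cite[Proposition 3.1]{FQ1} are all already available, the proof amounts essentially to an assembly; what has to be gotten right is the degree count above, together with the observation that Theorem \ref{Thm:CMS} is doing double duty here --- it is precisely the polymatroidality of $\HS_i(I^k)$ that supplies the integral-closedness needed to run the determinantal trick. I do not expect any further obstacle.
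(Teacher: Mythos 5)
Your proof is correct and follows essentially the same route as the paper: translate the degree bound from Conjecture \ref{Conj:GenDeg} into the identity $\HS_i(I^{k+1})=I\cdot\HS_i(I^k)$ for $k\ge i$, apply Theorem \ref{Thm:IntCl-General} using the integral closedness of the polymatroidal ideal $\HS_i(I^k)$ (Theorem \ref{Thm:CMS}), and conclude via \cite[Proposition 3.1]{FQ1}. The only difference is cosmetic --- the paper phrases it as $\HS_i(I^s)=I^{s-i}J$ with $J=\HS_i(I^i)$, while you make the underlying degree count explicit, which is if anything slightly more careful.
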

	\begin{proof}
		Assume that Conjecture \ref{Conj:GenDeg} holds true and set $J=\HS_i(I^i)$ for some $i>0$. Since $\HS_i(\mathcal{R}(I))$ is generated in degrees $\le i$ as a $\mathcal{R}(I)$-module, $$\HS_i(I^{s})\ =\ I^{s-i}\cdot\HS_i(I^i)\ =\ I^{s-i}J,$$ for all $s\ge i$. In view of \cite[Proposition 3.1]{FQ1} mentioned before, it is enough to show that $I^{k+1}J:I=I^{k}J$ for all $k\ge0$. Since $S$ is a Noetherian domain and $I^{k}J=\HS_1(I^{k+i})$ is a polymatroidal ideal by Theorem \ref{Thm:CMS} and hence is integrally closed (see \cite[Theorem 2.4]{HRV}), Theorem \ref{Thm:IntCl-General} yields the conclusion.
	\end{proof}
	
	As an immediate consequence of this result and Theorem \ref{Thm:HS1-fg}, we have
	
	\begin{Corollary}
		Let $I\subset S$ be a polymatroidal ideal. Then $I$ satisfies the $1$st homological strong persistence property. That is, $\HS_1(I^{k+1}):I=\HS_1(I^k)$ for all $k\ge1$. In particular,
		$$
		\Ass\,\HS_1(I)\subset\Ass\,\HS_1(I^2)\subset\Ass\,\HS_1(I^3)\subset\cdots.
		$$
	\end{Corollary}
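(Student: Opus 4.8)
The plan is to observe that the final statement is simply the argument from the proof of Theorem \ref{Thm:Implication}, run with $i=1$, where the hypothesis needed there is now unconditionally available. Indeed, Theorem \ref{Thm:HS1-fg} asserts precisely that Conjecture \ref{Conj:GenDeg} holds for $i=1$: setting $J=\HS_1(I)$, the identity $\HS_1(I^{k+1})=I\cdot\HS_1(I^k)$ iterates to
$$
\HS_1(I^s)\ =\ I^{s-1}J\quad\text{for all}\ s\ge1,
$$
so $\HS_1(\mathcal{R}(I))$ is generated in degree one over $\mathcal{R}(I)$.

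Next I would fix $k\ge1$ and note that $\HS_1(I^k)=I^{k-1}J$ is polymatroidal: $I^k$ is polymatroidal as a product of polymatroidal ideals, hence so is $\HS_1(I^k)$ by Theorem \ref{Thm:CMS}, and therefore $I^{k-1}J$ is integrally closed by \cite[Theorem 2.4]{HRV}. Since $S$ is a Noetherian domain, Theorem \ref{Thm:IntCl-General}, applied to the ideals $I$ and $J$ with the integrally closed ideal $I^{k-1}J$ (that is, with exponent $k-1$), yields
$$
\HS_1(I^{k+1}):I\ =\ (I\cdot I^{k-1}J):I\ =\ I^{k-1}J\ =\ \HS_1(I^k),
$$
which is exactly the $1$st homological strong persistence property. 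The chain of associated primes then follows formally from \cite[Proposition 3.1]{FQ1}, which turns the equality $\HS_1(I^{k+1}):I=\HS_1(I^k)$ into the inclusion $\Ass\,\HS_1(I^k)\subset\Ass\,\HS_1(I^{k+1})$; letting $k$ run over all positive integers gives the displayed chain.

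There is essentially no obstacle here: all the substance is already packaged into Theorem \ref{Thm:HS1-fg} (degree-one generation) and Theorem \ref{Thm:IntCl-General} (the determinantal trick). The only point requiring a little care is the index bookkeeping — that $\HS_1(I^{k+1})$ equals $I^kJ$ while $\HS_1(I^k)$ equals $I^{k-1}J$, so Theorem \ref{Thm:IntCl-General} must be invoked with exponent $k-1$; for $k=1$ this reads $\HS_1(I^2):I=\HS_1(I)$, which is consistent because $\HS_1(I)$ is itself polymatroidal and hence integrally closed.
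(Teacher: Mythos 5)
Your proposal is correct and is exactly the paper's intended argument: the paper states this corollary as an immediate consequence of Theorem \ref{Thm:Implication} and Theorem \ref{Thm:HS1-fg}, and your write-up simply unfolds that implication for $i=1$ (degree-one generation from Theorem \ref{Thm:HS1-fg}, integral closedness of the polymatroidal ideal $\HS_1(I^k)$, the determinantal trick of Theorem \ref{Thm:IntCl-General}, and \cite[Proposition 3.1]{FQ1} for the chain of associated primes). The index bookkeeping you flag is handled correctly, so nothing further is needed.
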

	
	Combining Theorem \ref{Thm:Implication} with Propositions \ref{Prop:B(u)}, \ref{Prop:VerType} and \ref{Prop:I(G)} we obtain
	\begin{Corollary}\label{Cor:someCases}
		Conjecture \ref{Conj:AssPolym} holds true for the following families of ideals.
		\begin{enumerate}
			\item[\textup{(a)}] Principal Borel ideals.
			\item[\textup{(b)}] Polymatroidal ideals satisfying the strong exchange property.
			\item[\textup{(c)}] Matroidal edge ideals.
		\end{enumerate}
	\end{Corollary}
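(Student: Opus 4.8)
The plan is to deduce all three cases from Theorem~\ref{Thm:Implication}, observing that the proof of that theorem only uses the conclusion of Conjecture~\ref{Conj:GenDeg} for the \emph{single} ideal under consideration, not the conjecture in full generality. Thus the task reduces to verifying, for each family, that $\HS_i(\mathcal{R}(I))$ is generated in degrees $\le i$ as an $\mathcal{R}(I)$-module; and this is exactly what Propositions~\ref{Prop:B(u)},~\ref{Prop:VerType} and~\ref{Prop:I(G)} supply.

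Concretely, for a principal Borel ideal $I=B(u)$, Proposition~\ref{Prop:B(u)} gives $\HS_i(I^{k+1})=I\cdot\HS_i(I^k)$ for all $k\ge1$, hence $\HS_i(I^k)=I^{k-1}\HS_i(I)$ for all $k\ge1$; Proposition~\ref{Prop:VerType} gives the same for a polymatroidal ideal with the strong exchange property. In both cases $\HS_i(\mathcal{R}(I))$ is generated in degree $1$, a fortiori in degrees $\le i$. For a matroidal edge ideal $I=I(G)$, Proposition~\ref{Prop:I(G)} gives $\HS_i(I^{k+1})=I\cdot\HS_i(I^k)$ for all $k\ge i$, so $\HS_i(I^k)=I^{k-i}\HS_i(I^i)$ for $k\ge i$, again meeting the generating-degree bound.

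With this in hand, I would repeat the argument of Theorem~\ref{Thm:Implication} verbatim: put $J=\HS_i(I^i)$, so that $\HS_i(I^s)=I^{s-i}J$ for all $s\ge i$. Since $I^kJ=\HS_i(I^{k+i})$ is polymatroidal by Theorem~\ref{Thm:CMS}, it is integrally closed by \cite[Theorem 2.4]{HRV}, so Theorem~\ref{Thm:IntCl-General} yields $I^{k+1}J:I=I^kJ$ for every $k\ge0$; equivalently, $\HS_i(I^{s+1}):I=\HS_i(I^s)$ for all $s\ge i$. By \cite[Proposition 3.1]{FQ1} this $i$th homological strong persistence property forces $\Ass\,\HS_i(I^s)\subset\Ass\,\HS_i(I^{s+1})$ for all $s\ge i$, which is precisely the chain of inclusions in Conjecture~\ref{Conj:AssPolym}. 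In the first two families one even obtains the chain starting from $s=1$.

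There is no real obstacle: the corollary is an assembly of Theorem~\ref{Thm:Implication} with the three Propositions. The only points demanding a little care are (i) noting, as above, that the proof of Theorem~\ref{Thm:Implication} localizes to the ideal at hand, and (ii) matching the ranges of $k$ in the Propositions ($k\ge1$ in (a),(b) versus $k\ge i$ in (c)) with the index shift $s=k+i$ used in the final step, so that the polymatroidality and integral-closure input of Theorem~\ref{Thm:CMS} is available for every $\HS_i$ that occurs.
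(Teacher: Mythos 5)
Your proposal is correct and follows exactly the paper's (one-line) proof: combine Theorem~\ref{Thm:Implication} with Propositions~\ref{Prop:B(u)}, \ref{Prop:VerType} and \ref{Prop:I(G)}, observing that the implication in Theorem~\ref{Thm:Implication} applies ideal-by-ideal once generation in degrees $\le i$ is known for the ideal at hand. Your extra care with the index ranges and the stronger conclusion (the chain starting at $s=1$ for families (a) and (b)) is a correct refinement but not a different argument.
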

	
	\section{Multiplicative properties of polymatroidal ideals}\label{sec8}
	
	Let $I\subset S$ be a monomial ideal. The \textit{socle} of $I$ is the ideal defined as
	$$
	\textup{soc}(I)\ =\ ({\bf x^a}:\ {\bf x^a}\in(I:\m)\setminus I).
	$$
	
	The following result is well-known, see \cite[Proposition 1.13]{HMRZa}.
	
	\begin{Proposition}\label{Prop:HSn-1}
		Let $I\subset S$ be a monomial ideal with linear resolution. Then $$\HS_{n-1}(I)\ =\ x_1x_2\cdots x_n\cdot\soc(I).$$
	\end{Proposition}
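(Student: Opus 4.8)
The plan is to identify the top of the minimal multigraded free resolution of $I$ by a direct Koszul computation. First I would pass from $I$ to $S/I$: the short exact sequence $0\to I\to S\to S/I\to 0$ induces a long exact sequence in $\Tor^S(K,-)$, and since $S$ is $S$-free one obtains a $\ZZ^n$-graded isomorphism $\Tor^S_{n-1}(K,I)\cong\Tor^S_n(K,S/I)$ for $n\ge 2$ (the case $n=1$ is an immediate check, where the claimed identity reads $I=x_1\cdot\soc(I)$ for the principal ideal $I=(x_1^d)$). Consequently $\HS_{n-1}(I)$ is generated by the monomials ${\bf x^a}$ with $\Tor^S_n(K,S/I)_{\bf a}\ne 0$, so it suffices to read off the multidegrees of $\Tor^S_n(K,S/I)$.

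Next I would compute $\Tor^S_n(K,S/I)$ as the top homology $H_n\big(K_\bullet(x_1,\dots,x_n)\otimes_S S/I\big)$ of the Koszul complex on the variables. Its top term is $(S/I)(-{\bf 1})$ with ${\bf 1}=(1,\dots,1)$, and the differential out of it sends the free generator to $\sum_i\pm x_i$ times the generators in homological degree $n-1$; hence $H_n=\ker\partial_n=\big(0:_{S/I}\m\big)(-{\bf 1})$. Since $S/I$ has a $K$-basis consisting of residue classes of monomials, the vector space $0:_{S/I}\m$ has a $K$-basis given by the classes of exactly those monomials ${\bf x^b}$ with ${\bf x^b}\in(I:\m)\setminus I$; by definition these monomials generate $\soc(I)$. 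Therefore $\Tor^S_{n-1}(K,I)_{\bf a}\ne 0$ precisely when ${\bf x^a}=x_1\cdots x_n\cdot{\bf x^b}$ for such a monomial ${\bf x^b}$, which is exactly the statement $\HS_{n-1}(I)=x_1x_2\cdots x_n\cdot\soc(I)$.

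Two remarks on where care is needed. The main — and essentially only — subtlety is the multigraded bookkeeping: tracking the shift $-{\bf 1}$ coming from the top Koszul module and checking that the connecting isomorphism from the long exact sequence is $\ZZ^n$-graded. The argument is robust enough to absorb the degenerate case $\pd_S I<n-1$, where both sides vanish, since $\soc(I)\ne(0)\iff\m\in\Ass(S/I)\iff\pd_S S/I=n\iff\Tor^S_n(K,S/I)\ne 0$. Finally, the linear resolution hypothesis is not needed for the equality itself, but it is what makes $\HS_{n-1}(I)$ equigenerated: writing $d=\alpha(I)=\reg(I)$, the nonzero top Betti numbers of $I$ sit in bidegree $(n-1,\,n-1+d)$, so $\soc(I)$ is generated in degree $d-1$ and $\HS_{n-1}(I)$ in degree $d+n-1$, placing the statement within the class of ideals considered throughout the paper.
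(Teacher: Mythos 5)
Your proof is correct. Note that the paper does not actually prove Proposition \ref{Prop:HSn-1}: it is quoted as well known with a citation to \cite[Proposition 1.13]{HMRZa}, so there is no in-text argument to compare against; your Koszul computation is the standard proof of that cited fact. All the key steps check out: the connecting isomorphism $\Tor^S_{n-1}(K,I)\cong\Tor^S_n(K,S/I)$ is indeed $\ZZ^n$-graded since $S$ is free; the top Koszul homology is $\ker\partial_n=\big(0:_{S/I}\m\big)(-{\bf 1})$ because there is nothing in homological degree $n+1$; and in multidegree ${\bf c}$ the module $0:_{S/I}\m$ is nonzero exactly when ${\bf x^c}\in(I:\m)\setminus I$, which matches the definition of $\soc(I)$ given in Section \ref{sec8}. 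Your closing observations are also accurate: the equality of ideals holds for any proper monomial ideal, the linear resolution hypothesis only serving to make both sides equigenerated (in degrees $\alpha(I)-1$ and $\alpha(I)+n-1$ respectively), and the case $\pd_S I<n-1$ is absorbed since then $\soc(I)=(0)$. The only caveat worth flagging is the trivial degenerate case $I=S$ (where $\HS_0(S)=S$ but $\soc(S)=(0)$), which is excluded by the standing convention that $I$ is a proper ideal.
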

	
	The product of polymatroidal ideals is polymatroidal. In the next result, we show that a similar but opposite property holds when we multiply by $\m$.
	\begin{Proposition}\label{Prop:mI=>IPolym}
		Let $I\subset S$ be an ideal with linear resolution. Suppose that $\m I$ is polymatroidal. Then $\textup{soc}(\m I)=I$, and so $I$ is polymatroidal.
	\end{Proposition}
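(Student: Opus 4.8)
The plan is to first establish the identity $\soc(\m I)=I$ using only the linear resolution of $I$, and then to deduce that $I$ is polymatroidal by transporting the polymatroidal property of $\HS_{n-1}(\m I)$ back to $I$ via Proposition~\ref{Prop:HSn-1}.

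Since $I$ has a linear resolution it is generated in a single degree $d:=\alpha(I)$, so $\m I$ is generated in degree $d+1$. The inclusion $I\subseteq\soc(\m I)$ is immediate: for $u\in\mathcal{G}(I)$ one has $\m u\subseteq\m I$ while $u\notin\m I$ for degree reasons, so $u\in\soc(\m I)$. For the reverse inclusion, let ${\bf x^a}\in(\m I:\m)\setminus\m I$; from $\m{\bf x^a}\subseteq\m I$ and the fact that $\m I$ is generated in degree $d+1$ we get $e:=\deg({\bf x^a})\ge d$. Suppose, for contradiction, that ${\bf x^a}\notin I$. Then $x_i{\bf x^a}\in\m I\subseteq I$ for every $i$ forces $(I:{\bf x^a})=\m$, so the residue class of ${\bf x^a}$ is a nonzero element of $S/I$ of degree $e$ annihilated by $\m$; this socle element forces $\beta_{n,n+e}(S/I)\ne0$. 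But a $d$-linear resolution of $I$ gives $\beta_{n,j}(S/I)=0$ for every $j\ne n+d-1$, and $n+e\ge n+d>n+d-1$, a contradiction. Therefore ${\bf x^a}\in I$, and $\soc(\m I)=I$.

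Now use the hypothesis that $\m I$ is polymatroidal. Then $\m I$ has a linear resolution, so Proposition~\ref{Prop:HSn-1} yields $\HS_{n-1}(\m I)=x_1x_2\cdots x_n\cdot\soc(\m I)=x_1x_2\cdots x_n\cdot I$, and by Theorem~\ref{Thm:CMS} this ideal is polymatroidal. Since $x_1\cdots x_n$ divides every minimal generator of $x_1\cdots x_n\cdot I$, cancelling it returns $I$; and the exchange property is clearly preserved under removing a common monomial factor from all generators, so $I$ is again polymatroidal. This completes the argument.

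The genuine content is the inclusion $\soc(\m I)\subseteq I$, and it is here that the linear-resolution hypothesis on $I$ cannot be dropped: for instance $I=(x_1^2,x_2^2)$ satisfies $\soc(\m I)=\m^2\supsetneq I$, and the vanishing of the top Betti numbers of $S/I$ above the linear strand is exactly the obstruction that rules such examples out. The two remaining ingredients — the easy containment $I\subseteq\soc(\m I)$ and the passage of the polymatroidal property through $\HS_{n-1}(\m I)$ and through cancellation of the common factor $x_1\cdots x_n$ — are routine.
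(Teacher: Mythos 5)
Your proof is correct and follows essentially the same route as the paper: both arguments reduce to showing that a monomial in $(\m I:\m)\setminus I$ would give a nonzero socle element of $S/I$ in degree $\ge d$, which is incompatible with the $d$-linear resolution of $I$, and then transport polymatroidality back from $\HS_{n-1}(\m I)=x_1\cdots x_n\cdot I$ via Proposition~\ref{Prop:HSn-1} and Theorem~\ref{Thm:CMS}. The only difference is that where the paper invokes \cite[Proposition 1.4]{CHL} for the degree bound on socle elements, you derive it directly from the vanishing of $\beta_{n,j}(S/I)$ outside the linear strand --- a valid, self-contained substitute.
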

	\begin{proof}
		Let $\alpha(I)=d$ be the initial degree of $I$. Firstly, we show that $(\m I:\m)=I$. The inclusion $I\subset(\m I:\m)$ is clear. Suppose that there exists $f\in (\m I:\m)\setminus I$. Then $\deg(f)=d-1$ by \cite[Proposition 1.4]{CHL}. Since $\m I$ is generated in degree $d+1$, then $(\m I:\m)$ is generated in degrees $\ge d$. This a contradiction, since $\deg(f)=d-1$. Thus $(\m I:\m)=I$. Since any $u\in\mathcal{G}(I)$ does not belong to $\m I$ by degree reasons, it follows that $\textup{soc}(\m I)=I$. Finally, since $\m I$ is polymatroidal, Proposition \ref{Prop:HSn-1} and Theorem \ref{Thm:CMS} imply that $\textup{soc}(\m I)=I$ is polymatroidal too.
	\end{proof}
	
	Given monomial ideals $L,P\subset S$ with $P$ prime, the \textit{monomial localization} of $L$ at $P$ is defined as the monomial ideal $L(P)$ in $S(P)=K[x_{i}:x_i\in P]$, obtained by applying the substitutions $x_i\mapsto 1$ for all $x_i\notin P$.
	
	Proposition \ref{Prop:mI=>IPolym} was conjectured in \cite{BH2013}. In such a paper, Bandari and Herzog conjectured that a monomial ideal $I\subset S$ is polymatroidal, if and only if, $I(P)$ has a linear resolution, for all monomial prime ideals $P\subset S$ \cite[Conjecture 2.9]{BH2013}.
	
	The next result was proved in \cite[Proposition 2.8]{BH2013} under the additional assumption that $I$ does not have embedded associated primes. See also \cite[Proposition 1.13]{MN21} for a different proof.
	\begin{Corollary}\label{Cor:BH}
		Let $I\subset S$ be a monomial ideal with $\textup{height}\,I=n-1$ such that $I(P)$ has a linear resolution for all monomial prime ideals $P\subset S$. Then $I$ is polymatroidal.
	\end{Corollary}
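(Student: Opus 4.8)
The plan is to reduce to the case where $I$ has no embedded associated primes, which is precisely \cite[Proposition 2.8]{BH2013} (see also \cite[Proposition 1.13]{MN21}), using \cite[Proposition 1.4]{CHL} and the ideas behind Proposition \ref{Prop:mI=>IPolym} to dispose of the remaining case. Taking $P=\m$ in the hypothesis shows $I$ has a linear resolution; set $d=\alpha(I)$, so the resolution is $d$-linear. Since $\height\,I=n-1$, every minimal prime of $I$ is a monomial prime generated by $n-1$ of the variables, and the only monomial prime strictly containing such a prime is $\m$; hence $\Ass(S/I)\subseteq\Min(I)\cup\{\m\}$, so $\m$ is the only associated prime that can be embedded. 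If $\m\notin\Ass(S/I)$ we are done by \cite[Proposition 2.8]{BH2013}, so assume from now on that $\m\in\Ass(S/I)$, equivalently $\soc(I)\neq(0)$.

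The crux is to show that $I$ is divisible by $\m$, i.e. $I=\m J$ with $J:=(I:\m)$, and that $J$ has a $(d-1)$-linear resolution. Granting this, $J$ satisfies the hypotheses of the corollary: its associated primes are exactly $\Min(I)$, so $\height\,J=n-1$; monomial localization commutes with colons, and $\m(P)=(1)$ for every monomial prime $P\neq\m$ whereas $\m(\m)=\m$, so $J(P)=I(P):\m(P)=I(P)$ has a linear resolution for $P\neq\m$, while $J(\m)=J$ has one by assumption. Since $\alpha(J)=d-1<d$, induction on the initial degree --- with trivial base case $\alpha(I)=1$, where a monomial ideal with $1$-linear resolution is generated by a subset of the variables, hence matroidal --- shows $J$ is polymatroidal. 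As $\m$ is polymatroidal and products of polymatroidal ideals are polymatroidal \cite[Theorem 12.6.3]{HHBook}, $I=\m J$ is polymatroidal.

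The main obstacle is establishing this divisibility. By \cite[Proposition 1.4]{CHL}, every monomial of $(I:\m)\setminus I$ has degree $d-1$, so $J=I+\soc(I)$ with $\soc(I)$ generated in degree $d-1$; since $\m I$ is generated in degree $d+1$, the equality $I=\m J$ is equivalent to saying that every $u\in\mathcal{G}(I)$ is divisible by some degree-$(d-1)$ monomial $w$ with $\m w\subseteq I$, i.e. that for each $u\in\mathcal{G}(I)$ there is an $i$ with $x_i\mid u$ and $x_j(u/x_i)\in I$ for all $j$. I expect to extract this from the localizations at the coordinate primes $Q$ of height $n-1$: each $I(Q)$ is an $\m$-primary monomial ideal of $S(Q)$ with a linear resolution, hence a power of the maximal ideal of $S(Q)$ (if $L$ is an Artinian monomial ideal with a linear resolution, its regularity forces $S(Q)/L$ to vanish in degrees $\ge\indeg(L)$, whence $L=\m_{S(Q)}^{\indeg(L)}$), and imposing these rigid local shapes simultaneously should pin down the divisibility and, via a regularity count on $0\to I\to J\to J/I\to0$ with $J/I$ concentrated in degree $d-1$, the $(d-1)$-linearity of $J$. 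An alternative that sidesteps the divisibility, at its own cost, is to work with the saturation $I^{\mathrm{sat}}=I:\m^{\infty}$: here $(I^{\mathrm{sat}})(P)=I(P)$ for all $P\neq\m$, $\reg(S/I^{\mathrm{sat}})\le\reg(S/I)=d-1$, and $(I^{\mathrm{sat}})_{\langle d\rangle}=I$, but $I^{\mathrm{sat}}$ need not be equigenerated, so it is at best componentwise polymatroidal and one must then invoke that the degree-$d$ component of such an ideal is polymatroidal.
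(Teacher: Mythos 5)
Your reduction to the case $\m\in\Ass(S/I)$ is fine, but the ``crux'' of your main route --- the claim that $I=\m J$ with $J=(I:\m)$ whenever $\m$ is an embedded prime --- is false, so the induction on the initial degree cannot start. Take $n=3$ and $I=(x^2,xy,xz,yz)=(x,yz)\cap\m^2\subset K[x,y,z]$. This ideal satisfies every hypothesis of the corollary (it is in fact polymatroidal, of height $2$, with $\m$ embedded, and all monomial localizations are either the unit ideal, a prime, or $I$ itself, hence have linear resolutions), yet $(I:\m)=(x,yz)$ and $\m(x,yz)$ does not contain $yz$: in your own reformulation, the generator $u=yz$ is divisible by no variable $w$ with $\m w\subset I$. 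Note also that $(I:\m)$ here is not equigenerated, so the claim that $J$ has a $(d-1)$-linear resolution breaks down as well. The difficulty is structural: Proposition \ref{Prop:mI=>IPolym} lets you descend from $\m L$ to $L$ when $\m L$ is known to be polymatroidal, but it does not say that an ideal with $\m$ embedded factors as $\m$ times its colon. Beyond this, the steps you do keep are only sketched (``I expect to extract\dots'', ``should pin down\dots''), so even the local analysis at the height-$(n-1)$ primes is not actually carried out.

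Your alternative route via $I^{\textup{sat}}$ is the one that works and is essentially the paper's proof, but you stop exactly where the work is. The paper writes $I=J\cap Q$ with $J$ the intersection of the height-$(n-1)$ components and $Q$ $\m$-primary, observes that each $J(P)=I(P)$ for $P\in\Ass\,J$ is Artinian with linear resolution and hence equals $P^{k}$ (this is your ``rigid local shape'' step, via \cite[Lemma 2.2]{BH2013}), so that $J=P_1^{k_1}\cap\dots\cap P_r^{k_r}$ with the $P_i$ of height $n-1$ and pairwise summing to $\m$; then \cite[Theorem 3.1]{FVT2007} gives that $J$ is componentwise polymatroidal. Since $I_{\langle j\rangle}=\m^{j-d}I=J_{\langle j\rangle}$ for $j\gg0$, the ideals $\m^{j-d}I$ are polymatroidal for large $j$, and repeated applications of Proposition \ref{Prop:mI=>IPolym} descend to $I$. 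In other words, the correct argument ascends to a high component where $I$ and its saturation agree and then peels off one copy of $\m$ at a time; it never asserts the divisibility $I=\m(I:\m)$ that your main route depends on. To salvage your write-up you would need to supply the componentwise polymatroidality of $I^{\textup{sat}}$ (the Francisco--Van Tuyl step) rather than leave it as an aside.
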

	\begin{proof}
		Write $I=J\cap Q$, where $J$ is the intersection of the primary components of $I$ having height $n-1$ and $Q$ is a $\m$-primary ideal. Then any $P\in\Ass\,J$ is a minimal prime of $J$. Thus $S(P)/J(P)$ has finite length for all $P\in\Ass\,J$. Since $J(P)=I(P)$ has a linear resolution, \cite[Lemma 2.2]{BH2013} implies that $J(P)=P^k$ for some $k$. Hence, $$J\ =\ P_1^{k_1}\cap\dots\cap P_r^{k_r}$$ for some monomial prime ideals $P_j$ of height $n-1$. Using that $P_i+P_j=\m$ for all $i\ne j$, then \cite[Theorem 3.1]{FVT2007} guarantees that $J$ is componentwise polymatroidal. Let $\alpha(I)=d$. Then, $\m^{j-d}I=I_{\langle j\rangle}=J_{\langle j\rangle}$ for $j\gg0$, because $Q$ is $\m$-primary. Since $J$ is componentwise polymatroidal, $\m^{j-d}I$ is polymatroidal. Repeated applications of Proposition \ref{Prop:mI=>IPolym} yield the conclusion.
	\end{proof}
	
	\section{Componentwise polymatroidal ideals}\label{sec9}
	
	Let $I\subset S$ be a homogeneous ideal. Then $I=\bigoplus_{j\ge0}I_j$, where $I_j$ is the $K$-vector space spanned by the homogeneous polynomials of degree $j$ belonging to $I$. We denote by $I_{\langle j\rangle}$ the homogeneous ideal of $S$ generated by $I_{j}$.
	
	We say that a monomial ideal $I\subset S$ is \textit{componentwise polymatroidal} if $I_{\langle j\rangle}$ is a polymatroidal ideal for all $j$. If $I$ is a polymatroidal generated in degree $d$, then $I_{\langle j\rangle}=(0)$ for $j<d$ and $I_{\langle j\rangle}=\m^{j-d}I$ for $j\ge d$. Hence, polymatroidal ideals are componentwise polymatroidal. Componentwise polymatroidal ideals have linear quotients \cite{F-SCDP}, see also \cite[Proposition 2]{CF2024} for a more general result.\smallskip
	
	As a consequence of Theorem \ref{Thm:CMS} \cite[Theorem A(iii)]{CMS}, we have
	
	\begin{Corollary}\label{Cor:(I:m)ComponentPolym}
		Let $I\subset S$ be a componentwise polymatroidal ideal. Then $(I:\m)$ is componentwise polymatroidal.
	\end{Corollary}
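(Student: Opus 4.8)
The plan is to reduce the assertion to a statement about a single polymatroidal ideal and then combine Proposition~\ref{Prop:HSn-1} with Theorem~\ref{Thm:CMS}; note first that $(I:\m)$ is again a monomial ideal.

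\emph{Step 1: reduction to one graded component.} Fix $j\ge0$. I would record the elementary identity $(I:\m)_j=(I_{\langle j+1\rangle}:\m)_j$ of $K$-subspaces of $S_j$: for $f\in S_j$ the condition $\m f\subseteq I$ only involves $I$ in degree $j+1$. Since $I_{\langle j+1\rangle}$ is polymatroidal, it is generated in degree $j+1$, so $(I_{\langle j+1\rangle}:\m)$ has no nonzero component in degree $<j$; hence $(I:\m)_{\langle j\rangle}=(I_{\langle j+1\rangle}:\m)_{\langle j\rangle}$. It therefore suffices to prove: if $L\subset S$ is polymatroidal and generated in degree $d$, then $(L:\m)_{\langle d-1\rangle}$ is polymatroidal.

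\emph{Step 2: identify $(L:\m)_{\langle d-1\rangle}$ inside $\HS_{n-1}(L)$.} If $(L:\m)_{d-1}=(0)$ there is nothing to prove, so assume the contrary. Since $L$ is generated in degree $d$, a degree-$(d-1)$ monomial of $(L:\m)$ cannot lie in $L$, and it is a minimal generator of $\soc(L)$ because $(L:\m)$ (hence $\soc(L)$) has no generators in degree $<d-1$. Thus $(L:\m)_{\langle d-1\rangle}$ is generated by the degree-$(d-1)$ minimal generators of $\soc(L)$, and $x_1\cdots x_n\cdot(L:\m)_{\langle d-1\rangle}$ is generated by minimal generators of $\HS_{n-1}(L)=x_1\cdots x_n\cdot\soc(L)$, using Proposition~\ref{Prop:HSn-1} (applicable since $L$ has a linear resolution). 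Now Theorem~\ref{Thm:CMS} gives that $\HS_{n-1}(L)$ is polymatroidal, hence generated in a single degree; as it has a generator in degree $n+d-1$, \emph{all} its generators have that degree, so $\soc(L)$ is generated in degree $d-1$ and $\HS_{n-1}(L)=x_1\cdots x_n\cdot(L:\m)_{\langle d-1\rangle}$.

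\emph{Step 3: stripping off the monomial factor.} I would then invoke the easy observation that multiplying by a fixed monomial neither creates nor destroys the polymatroidal property: if $N$ is an equigenerated monomial ideal and $c$ a monomial, then $cN$ is equigenerated and the exchange property for $cN$ is word-for-word the exchange property for $N$ (the shift by $\deg_{x_i}(c)$ cancels in every comparison and every exchange move). Applying this with $c=x_1\cdots x_n$ and $N=(L:\m)_{\langle d-1\rangle}$, the polymatroidality of $\HS_{n-1}(L)$ forces that of $(L:\m)_{\langle d-1\rangle}$, which completes the reduction of Step 1 and hence the corollary. The only point demanding care is the degree bookkeeping of Step 2 — in particular, deducing that $\soc(L)$ is concentrated in degree $d-1$ — and this is exactly where the single-degree generation of polymatroidal ideals does the real work; the remaining verifications are routine.
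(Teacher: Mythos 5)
Your argument is correct and follows essentially the same route as the paper: reduce to the graded components via $(I:\m)_{\langle j\rangle}=(I_{\langle j+1\rangle}:\m)_{\langle j\rangle}=\soc(I_{\langle j+1\rangle})$, then apply Proposition~\ref{Prop:HSn-1} and Theorem~\ref{Thm:CMS} and strip off the factor $x_1\cdots x_n$. The only difference is cosmetic: where the paper quotes \cite[Proposition 1.4]{CHL} to see that the socle is concentrated in degree $d-1$, you deduce this from the equigeneration of the polymatroidal ideal $\HS_{n-1}(L)$, which is a legitimate self-contained substitute.
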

	\begin{proof}
		For all $j$, we have $(I:\m)_{\langle j\rangle}=(I_{\langle j+1\rangle}:\m)_{\langle j\rangle}=\textup{soc}(I_{\langle j+1\rangle})$, where the last equality follows from \cite[Proposition 1.4]{CHL}. Since $I_{\langle j+1\rangle}$ is polymatroidal, the assertion follows from Proposition \ref{Prop:HSn-1} and Theorem \ref{Thm:CMS}.
	\end{proof}
	
	Let $I$ be an ideal of $S$. The \textit{saturation} of $I$ is the ideal defined as
	$$
	I^{\textup{sat}}\ =\ \bigcup_{k\ge0}(I:\m^k).
	$$
	
	We have the ascending chain 
	$$
	I\subset(I:\m)\subset(I:\m^2)\subset\dots\subset(I:\m^k)\subset\cdots.
	$$
	
	Since $S$ is a Noetherian ring, there exists a smallest integer $\textup{sat}(I)$, called the \textit{saturation number} of $I$, such that $(I:\m^{k+1})=(I:\m^k)$ for all $k\ge\textup{sat}(I)$. Hence, $I^{\textup{sat}}=(I:\m^{\textup{sat}(I)})$. As a consequence of Corollary \ref{Cor:(I:m)ComponentPolym} we have
	
	\begin{Corollary}\label{Cor:Isat}
		Let $I\subset S$ be a componentwise polymatroidal ideal. Then $I^\textup{sat}$ is componentwise polymatroidal.
	\end{Corollary}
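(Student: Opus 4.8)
The plan is to obtain $I^{\textup{sat}}$ from $I$ by finitely many applications of the colon operation $(-):\m$ and to invoke Corollary \ref{Cor:(I:m)ComponentPolym} at each step. First I would record the elementary identity for colon ideals: for any monomial ideal $L\subset S$ and any integer $k\ge1$ one has $(L:\m^k)=((L:\m^{k-1}):\m)$, which is the special case $a=\m^{k-1}$, $b=\m$ of the general rule $(L:ab)=((L:a):b)$; in particular each $(L:\m^j)$ is again a monomial ideal.

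Next I would prove by induction on $k\ge0$ that $(I:\m^k)$ is componentwise polymatroidal. The base case $k=0$ is precisely the hypothesis on $I$. For the inductive step, assume that $(I:\m^{k-1})$ is componentwise polymatroidal. By the identity above, $(I:\m^k)=((I:\m^{k-1}):\m)$, and Corollary \ref{Cor:(I:m)ComponentPolym}, applied to the componentwise polymatroidal ideal $(I:\m^{k-1})$, shows that $(I:\m^k)$ is componentwise polymatroidal, completing the induction.

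Finally, by definition of the saturation number we have $I^{\textup{sat}}=(I:\m^{\textup{sat}(I)})$, so the case $k=\textup{sat}(I)$ of the induction gives the assertion. There is no real obstacle in this argument: the entire weight of the statement is carried by Corollary \ref{Cor:(I:m)ComponentPolym} (hence ultimately by Theorem \ref{Thm:CMS} and Proposition \ref{Prop:HSn-1}), and the only additional inputs are the routine colon identity recalled above and the trivial observation that the chain $I\subset(I:\m)\subset(I:\m^2)\subset\cdots$ stabilizes because $S$ is Noetherian, which is exactly what makes $\textup{sat}(I)$ well defined.
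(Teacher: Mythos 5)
Your argument is exactly the paper's: write $I^{\textup{sat}}=(I:\m^{\textup{sat}(I)})$ as an iterated colon $((\cdots((I:\m):\m)\cdots):\m)$ and apply Corollary \ref{Cor:(I:m)ComponentPolym} at each of the $\textup{sat}(I)$ steps. The proof is correct and coincides with the one in the text, with the induction merely spelled out more explicitly.
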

	\begin{proof}
		We can write $$I^{\textup{sat}}\ =\ (I:\m^{\textup{sat}(I)})=((\cdots(((I:\m):\m):\m):\cdots):\m),$$ where the colon is taken $\textup{sat}(I)$ times. Hence, the assertion follows by applying Corollary \ref{Cor:(I:m)ComponentPolym} $\textup{sat}(I)$ times.
	\end{proof}
	
	\section{Componentwise polymatroidality of $\HS_1(I)$}\label{sec10}
	
	Let $I\subset S$ be a componentwise polymatroidal ideal. We expect that $\HS_i(I)$ is again componentwise polymatroidal, see \cite[Question 9]{F-SCDP}. At the moment, we are only able to prove this expectation for the first homological shift ideal.
	\begin{Theorem}\label{Thm:HS-cp}
		Let $I\subset S$ be a componentwise polymatroidal ideal. Then $\HS_1(I)$ is componentwise polymatroidal.
	\end{Theorem}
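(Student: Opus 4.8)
The plan is to prove the stronger, componentwise statement that every graded component $\HS_1(I)_{\langle j\rangle}$ is polymatroidal. Since a componentwise polymatroidal ideal has linear quotients, \cite[Lemma 2.2]{FQ1} applies and gives $\HS_1(I)=(\lcm(u,v):u,v\in\mathcal{G}(I),\ u\ne v)$; as $\lcm(u,v)\mid m$ exactly when $u\mid m$ and $v\mid m$, the minimal generators of $\HS_1(I)_{\langle j\rangle}$ are precisely the monomials of degree $j$ divisible by at least two distinct elements of $\mathcal{G}(I)$ (being of equal degree, none of them divides another). Write $\mathcal{B}_j$ for this set of monomials. Only generators of $I$ of degree $<j$ can be involved, so one may replace $I$ by the ideal generated by its generators of degree $<j$ — still componentwise polymatroidal — without changing $\HS_1(I)_{\langle j\rangle}$. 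It remains to verify that $\mathcal{B}_j$ satisfies the exchange property.

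Let $m,m'\in\mathcal{B}_j$ with $\deg_{x_p}(m)>\deg_{x_p}(m')$ and set $e=\deg_{x_p}(m)$; since $\deg m=\deg m'$ there is at least one index $q$ with $\deg_{x_q}(m)<\deg_{x_q}(m')$, and the task is to find such a $q$ with $x_q(m/x_p)\in\mathcal{B}_j$. If two distinct generators $u_1,u_2$ of $I$ divide $m$ with $\deg_{x_p}(u_1),\deg_{x_p}(u_2)<e$, then both still divide $x_q(m/x_p)$ for every such $q$, and any $q$ works. The delicate case is when every generator of $I$ dividing $m$ has $x_p$-degree equal to $e$; then no generator divides $m/x_p$, so $m/x_p\notin I$. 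The tools for this case are the polymatroidality of the components $I_{\langle\ell\rangle}$ (with $\HS_1(I_{\langle\ell\rangle})$ polymatroidal by Theorem~\ref{Thm:CMS} and Proposition~\ref{Prop:HS1-polym}), the symmetric exchange property, and the fact that the set $\{\deg_{x_p}(u):u\in\mathcal{G}(I_{\langle\ell\rangle})\}$ of $x_p$-degrees of generators of a polymatroid is an interval of integers. Given a generator $u$ of $I$ with $u\mid m$ (so $\deg_{x_p}(u)=e$) and a generator $v$ of $I$ with $v\mid m'$ (so $\deg_{x_p}(v)<e$), one lifts $u$ and $v$ along $m$ and $m'$ to a common degree, where they become honest generators of one polymatroidal component $I_{\langle\ell\rangle}$ (every degree-$\ell$ monomial of $I$ is a minimal generator of $I_{\langle\ell\rangle}$); symmetric exchange there moves a copy of $x_p$ out of $u$ and produces a generator of $I$ of $x_p$-degree $<e$ dividing $x_q(m/x_p)$ for a suitable $q$.

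I expect the main obstacle to be twofold: reconciling the index $q$ produced by this exchange with the requirement $\deg_{x_q}(m)<\deg_{x_q}(m')$, and producing a \emph{second} distinct generator of $I$ dividing $x_q(m/x_p)$, which membership in $\mathcal{B}_j$ demands. The natural remedy is to run the exchange simultaneously for two distinct generators $u_1,u_2$ of $I$ dividing $m$ so as to force a common index $q$ for both, exploiting the interval property and a careful choice of the liftings and of $v$; and, if this cannot be arranged directly, to reduce first — by induction on the number of distinct degrees occurring in $\mathcal{G}(I)$ — to the case where $\mathcal{G}(I)$ uses only two degrees, where the combinatorics is essentially that of a single polymatroidal ideal together with its $\HS_1$. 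Once $x_q(m/x_p)$ is seen to be divisible by two distinct generators of $I$, it lies in $\mathcal{B}_j$, so $\mathcal{B}_j$ satisfies the exchange property, $\HS_1(I)_{\langle j\rangle}$ is polymatroidal for every $j$, and $\HS_1(I)$ is componentwise polymatroidal.
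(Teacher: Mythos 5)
Your reduction to showing that each component $\HS_1(I)_{\langle j\rangle}$ is polymatroidal, and your identification of its generators with the set $\mathcal{B}_j$ of degree-$j$ monomials divisible by two distinct elements of $\mathcal{G}(I)$, are both sound. But the proposal stops short of a proof exactly where the difficulty lies: in the ``delicate case'' you do not actually produce the index $q$ with $\deg_{x_q}(m)<\deg_{x_q}(m')$ such that $x_q(m/x_p)$ is again divisible by \emph{two} distinct generators of $I$. You explicitly list the two obstacles (matching the exchange index $q$ with the degree condition, and manufacturing a second divisor) and then offer only candidate remedies (``run the exchange simultaneously for two generators,'' ``reduce by induction on the number of degrees''), neither of which is carried out. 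In addition, your case split is incomplete: between the easy case (two generators dividing $m$ with $x_p$-degree $<e$) and the delicate case (all generators dividing $m$ have $x_p$-degree $=e$) sits the case where \emph{exactly one} generator dividing $m$ has $x_p$-degree $<e$; there $m/x_p\in I$, yet $x_q(m/x_p)$ may a priori have only one divisor in $\mathcal{G}(I)$, so it is not covered by either branch. As written, the argument is a plan with acknowledged gaps rather than a proof.

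The idea you are missing is that one need not verify the exchange property for $\mathcal{B}_j$ from scratch. The paper instead proves the identity $\HS_1(I)_{\langle j\rangle}=\HS_1(I_{\langle j-1\rangle})$ for every $j$, after which polymatroidality of each component is immediate from Theorem~\ref{Thm:CMS} applied to the polymatroidal ideal $I_{\langle j-1\rangle}$. The identity itself is obtained by introducing $L=\bigoplus_j L_j$ with $L_j$ spanned by $\mathcal{G}(\HS_1(I_{\langle j-1\rangle}))$, checking that $L$ is an ideal via the description $\HS_1(J)=(\m J)^{\le{\bf deg}(J)}$ of Proposition~\ref{Prop:HS1-polym} together with ${\bf deg}(I_{\langle j-1\rangle})+{\bf 1}\le{\bf deg}(I_{\langle j\rangle})$, and then proving $\HS_1(I)=L$ by a single application of the symmetric exchange property inside one component $I_{\langle d\rangle}$ (after padding a generator $v$ of smaller degree up to degree $d$). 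That one localized exchange replaces the global exchange verification your approach requires, and it is precisely the step your sketch does not manage to complete.
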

	\begin{proof}
		Let $I=\bigoplus_{j\ge0}I_j$. Then $I_{\langle j\rangle}$ is a polymatroidal ideal for all $j$. We claim that $\HS_1(I)_{\langle j\rangle}=\HS_1(I_{\langle j-1\rangle})$ for all $j$. This fact together with Theorem \ref{Thm:CMS} implies the assertion. Let $L=\bigoplus_{j\ge0}L_j$ where $L_j$ is the $K$-vector space spanned by $\mathcal{G}(\HS_1(I_{\langle j-1\rangle}))$. We claim that $L$ is an ideal, and that $\HS_1(I)=L$. Once we acquire these claims, it follows that $\HS_1(I)_{\langle j\rangle}=L_{\langle j\rangle}=\HS_1(I_{\langle j-1\rangle})$ for all $j$, as desired.
		
		Let us show that $L$ is an ideal. We only need to prove that $\m L_{\langle j\rangle}\subset L_{\langle j+1\rangle}$ for all $j$. Let ${\bf 1}=(1,\dots,1)\in\mathbb{Z}_{\ge0}^n$. Since $I$ is an ideal, we have $\m I_{\langle j-1\rangle}\subset I_{\langle j\rangle}$, and consequently ${\bf deg}(I_{\langle j-1\rangle})+{\bf 1}\le{\bf deg}(I_{\langle j\rangle})$ for all $j$. Using this inequality and Proposition \ref{Prop:HS1-polym}, we have
		\begin{align*}
			\m L_{\langle j\rangle}\ &=\ \m\,\HS_1(I_{\langle j-1\rangle})\ =\ \m(\m I_{\langle j-1\rangle})^{\le{\bf deg}(I_{\langle j-1\rangle})}\\
			&\subset\ \m(I_{\langle j\rangle})^{\le{\bf deg}(I_{\langle j-1\rangle})}\ =\ (\m I_{\langle j\rangle})^{\le{\bf deg}(I_{\langle j-1\rangle})+{\bf 1}}\\
			&\subset\ (\m I_{\langle j\rangle})^{\le{\bf deg}(I_{\langle j\rangle})}\ =\ \HS_1(I_{\langle j\rangle})\\
			&=\ L_{\langle j+1\rangle}.
		\end{align*}
		
		It remains to prove that $\HS_1(I)=L$. Let $w\in L$ be a monomial of degree $j$. Then $w\in L_{\langle j\rangle}=\HS_1(I_{\langle j-1\rangle})$ and by \cite[Lemma 2.2]{FQ1} we have $w=\lcm(u,v)$ for some $u,v\in\mathcal{G}(I_{\langle j-1\rangle})\subset I$. Since, again by \cite[Lemma 2.2]{FQ1}, we have $$\HS_1(I)\ =\ (\lcm(f,g):\ f,g\in\mathcal{G}(I),\ f\ne g),$$ we see that $w\in\HS_1(I)$ and so $L\subset\HS_1(I)$.
		
		Conversely, let $w\in\HS_1(I)$ be a minimal monomial generator. Since $I$ has linear quotients \cite{F-SCDP}, by equation (\ref{eq:HerTak}) we can write $w=x_iu$ with $u\in\mathcal{G}(I)$ and there exists $v\in\mathcal{G}(I)$ with $v>u$ in the linear quotients order of $I$ such that $v:u=\lcm(v,u)/u=x_i$. By \cite[Lemma 2.1]{JZ}, we can assume that $\deg(v)\le\deg(u)$. Let $d=\deg(u)$, $d'=\deg(v)$ and set $v'=x_j^{d-d'}v$ for some $j\ne i$. Then $v',u\in\mathcal{G}(I_{\langle d\rangle})$ and moreover $\deg_{x_i}(v')=\deg_{x_i}(v)>\deg_{x_i}(u)$. By the symmetric exchange property applied to $I_{\langle d\rangle}$, we can find $s$ with $\deg_{x_s}(v')<\deg_{x_s}(u)$ such that $u'=x_i(u/x_s)\in\mathcal{G}(I_{\langle d\rangle})$. By \cite[Lemma 2.2]{FQ1}, we have $w=\lcm(u,u')\in\HS_1(I_{\langle d\rangle})=L_{\langle d+1\rangle}\subset L$. Hence $\HS_1(I)\subset L$ and this completes the proof.
	\end{proof}
	
	In general, products and powers of componentwise polymatroidal ideals are no longer componentwise polymatroidal, see \cite[Example 3.3]{BH2013}. On the other hand, it is expected \cite{BH2013,F-SCDP} that powers of a componentwise polymatroidal ideal $I\subset S$ are componentwise linear. Whether the homological shift algebra of $I$, or of any componentwise linear ideal, can be structured as a finitely generated $\mathcal{R}(I)$-module, or whether any of the results in this paper remains valid for componentwise polymatroidal ideals, remain open questions.\bigskip
	
	\noindent\textbf{Acknowledgment.}
	A. Ficarra was partly supported by INDAM (Istituto Nazionale di Alta Matematica), and also by the Grant JDC2023-051705-I funded by
	MICIU/AEI/10.13039/501100011033 and by the FSE.   D. Lu was supported by NSFC (No. 11971338).

\end{document}